\documentclass[12pt]{amsart}
\usepackage[utf8]{inputenc}
\usepackage[T1]{fontenc}
\usepackage[english]{babel}
\usepackage{graphicx}
\usepackage{amsthm}
\usepackage{amssymb}
\usepackage{stmaryrd}
\usepackage{csquotes}
\usepackage{amsmath,mleftright}
\usepackage{xparse}
\usepackage{calc}
\usepackage{braket}
\usepackage{hyperref}
\hypersetup{colorlinks,linkcolor={green},citecolor={magenta},urlcolor={cyan}}
\usepackage{subfig}
\usepackage{amsfonts}
\usepackage{caption}
\usepackage{subcaption}
\usepackage{mathtools}
\usepackage{dsfont}
\usepackage{enumerate}
\usepackage{enumitem}
\usepackage{tikz-cd}
\usepackage{tikz}
\usepackage{tikz-3dplot}
\usetikzlibrary{scopes}
\usetikzlibrary{backgrounds}
\usetikzlibrary{arrows.meta, calc}
\usetikzlibrary{decorations.pathreplacing} 
\usepackage{tabularx}
\usepackage{siunitx}
\usepackage[top=1 in,bottom=1in, left=3cm, right=3cm]{geometry}
\usepackage{placeins}
\usepackage{xcolor}
\usepackage{todonotes}
\usepackage{booktabs}
\usepackage{url}
\usepackage{comment}
\usepackage{caption}
\usepackage{setspace}
\usepackage[bottom]{footmisc}
\usepackage{array}
\usepackage{geometry}
\def\ptrad{2.6pt}   

\newtheorem{proposition}{Proposition}[section]
\newtheorem{lemma}[proposition]{Lemma}
\newtheorem{theorem}[proposition]{Theorem}
\newtheorem{corollary}[proposition]{Corollary}
\newtheorem{conjecture}{Conjecture}[section]
\theoremstyle{definition}
\newtheorem{remark}[proposition]{Remark}
\newtheorem{definition}[proposition]{Definition}
\newtheorem{example}[proposition]{Example}

\newcommand{\R}{\mathbb{R}} 

\newcommand{\Z}{\mathbb{Z}}
\newcommand{\Q}{\mathbb{Q}}

\newcommand{\graphvarphi}{\mathrm{graph}^{\updownarrow}(\varphi)}
\newcommand{\graphphi}{\mathrm{graph}^{\updownarrow}(\phi)}
\newcommand{\graphf}{\mathrm{graph}^{\updownarrow}(f)}

\tikzset{
	axis/.style   = {-{Latex[length=2.5mm]}, thick},
	branch/.style = {thick},
	level/.style  = {blue!80!black, dashed, thick},
	node/.style   = {circle, fill=black, inner sep=1.6pt},
	dot/.style    = {circle, fill=black, inner sep=1.3pt},
}

\title{Algebraically trivial tropical cycles are smash-nilpotent}
\author{Alexia Corradini}
\email{ac2478@cam.ac.uk}

\numberwithin{equation}{section}

\usepackage[backend=biber,style=alphabetic, sorting=nyt]{biblatex}
\begin{document}

\begin{abstract}
	We prove that if $Z$ is a tropical cycle in a tropical variety $Y$ which is algebraically trivial, then for $k\in\Z_{\geq0}$ sufficiently large, $k!\cdot Z^k\subset Y^k$ is rationally trivial. The proof involves explicitly constructing a rational equivalence $k!\cdot(p-q)^k\sim0$ in the product $C^k$, where $p$ and $q$ are any points on $C$. We use this to formulate a tropical version of Voevodsky's conjecture, provide explicit examples, and comment on potential applications to symplectic geometry.
\end{abstract}

\maketitle

\section{Introduction}

The development of tropical geometry over recent years has shown the surprising extent to which its piecewise-linear objects retain shadows of properties from classical algebraic geometry. Our main result in this paper exhibits yet another instance of this phenomenon:

\begin{theorem}\label{main_theorem}
	Let $Y$ be a tropical variety which is regular at infinity, and $Z$ an algebraically trivial $k$-cycle in $Y^{[0]}$, the $0$-sedentarity locus of $Y$ (using terminology from \cite{tropical_book_mikhalkin_rau}). There exits $k_0\in\Z_{>0}$ such that, for all $k\geq k_0$, $$k_0!\cdot Z\times\dots\times Z=:k_0!\cdot Z^k\subset Y^k:=Y\times\dots\times Y$$ is rationally trivial. 
\end{theorem}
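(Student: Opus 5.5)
The plan follows the classical Voevodsky–Voisin argument for smash-nilpotence of algebraically trivial cycles. It reduces everything to a single explicit rational equivalence on a power of a tropical curve, as the abstract indicates.

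\textbf{Step 1: reduce to differences of points on curves.} By definition, $Z$ algebraically trivial means that $Z$ is (up to rational equivalence) a finite sum $Z=\sum_{i=1}^m \Gamma_{i*}(p_i-q_i)$. Here $C_i$ is a smooth tropical curve, $p_i,q_i\in C_i$ are points, and $\Gamma_i\subset C_i\times Y$ is a tropical cycle acting as a correspondence. The hypothesis that $Y$ is regular at infinity and that $Z$ lies in $Y^{[0]}$ is what makes these correspondences, push-forwards, products and intersection products well defined and compatible with rational equivalence. I would first record the needed functoriality:
\begin{itemize}
\item $\times$ of cycles respects rational equivalence;
\item correspondences push rational equivalences forward;
\item $(\Gamma_*\alpha)^{\times k}=(\Gamma^{\times k})_*(\alpha^{\times k})$.
\end{itemize}
Expanding $Z^k$ multinomially, each term has the form $\prod_i \Gamma_{i*}(p_i-q_i)^{k_i}$ with $\sum k_i=k$. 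By pigeonhole, once $k\ge k_0:=m(N-1)+1$ some $k_i\ge N$. Such a term is the push-forward under $\prod_i\Gamma_i^{\times k_i}$ of $\prod_i (p_i-q_i)^{k_i}$ on $\prod_i C_i^{k_i}$. It therefore suffices that $N!\,(p-q)^{N}\sim 0$ on $C^{N}$ for every curve $C$ and points $p,q$, with $N$ depending only on the curves (for instance, related to the genus). Then $(p-q)^{n}\times(\text{stuff})$ is also killed for $n\ge N$, and the factor $k_0!$ is divisible by $N!$ as needed.

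\textbf{Step 2: the key curve computation.} I will show $N!\,(p-q)^N\sim0$ on $C^N$ for $N>g(C)$, where $g$ is the first Betti number. The classical proof uses the symmetric power: $(p-q)^{N}$ is symmetric, hence equals $\frac{1}{N!}\pi^*\pi_*$ of itself, where $\pi:C^N\to C^{(N)}$. For $N>2g-2$ (or $N>g$), $\pi_*$ of the expansion $\sum_j (-1)^{N-j}\binom{N}{j}[jp+(N-j)q]$ lives on a projective-bundle-like fibre of the Abel–Jacobi map over the tropical Jacobian. The points $jp+(N-j)q$ all map to points of $\mathrm{Jac}(C)$ lying on a line, and an iterated finite difference of order $N$ of a ``polynomial'' of degree $<N$ vanishes. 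Tropically I would avoid symmetric products, which are poorly behaved. Instead I would construct explicit rational functions on $C^N$ directly. Using tropical Abel–Jacobi theory (Mikhalkin–Zharkov), for each $j$ choose a rational function $f$ on $C$ with $\mathrm{div}(f)=$ a suitable difference of divisors, and pull back along coordinate projections and diagonals. The factor $N!$ arises from symmetrizing over $S_N$ so that the resulting cycles are integral. Concretely, I would induct on $N$: write $(p-q)^N=(p-q)^{N-1}\times(p-q)$ and use a rational function on $C^{N}$ built from a family of divisors $x_1+\dots+x_{N}-\dots$ that are linearly equivalent when $N>g$.

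\textbf{Main obstacle.} Step 2 is the hard part: producing an honest tropical rational equivalence on $C^N$, which requires bounded rational functions on tropical subvarieties $W\subset C^N$ whose divisors sum to $N!(p-q)^N$, with balancing verified along the diagonals. I would first treat $C$ of genus $0$ ($N=1$: $p-q=\mathrm{div}(f)$ directly) and genus $1$ explicitly, to find the pattern, and then generalize through the tropical Riemann–Roch/Abel–Jacobi linear system of degree $N>g$.
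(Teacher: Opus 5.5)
Your Step~1 is sound. It is essentially the paper's reduction, and in one respect slightly more general. The paper assumes a single algebraic equivalence $\Gamma$ with $\Gamma_p=Z$, $\Gamma_q=0$, and pushes the curve relation $W\subset C^k\times\R$ forward to $\Gamma^k(W)\subset Y^k\times\R$. Your multinomial expansion plus pigeonhole instead handles a chain $Z=\sum_{i=1}^m\Gamma_{i*}(p_i-q_i)$, at the cost of $k_0=m(N-1)+1$. However, the functoriality you merely ``record'' (that the correspondence action commutes with taking fibres over $\R$) is something the paper actually proves, via the projection formula and the diagonal $\Delta_\R$. You would also need to move $p_i,q_i$ off the $1$-valent vertices of $C_i$.

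The genuine gap is Step~2, which is the whole content of the theorem (Proposition~\ref{prop:tropical_voevodsky_curves}). You do not carry it out: you specify no $1$-cycle in $C^N\times\R$ (equivalently, no curve in $C^N$ with a bounded rational function on it), give no weights, and check no balancing. The mechanisms you gesture at do not obviously work:
\begin{enumerate}
\item Exterior products of linear equivalences on the factors cannot kill $(p-q)^N$: it survives in $CH_0(C)^{\otimes N}\otimes\Q$ whenever $p-q$ has infinite order in $J(C)$. So a genuinely non-product curve in $C^N$ is needed, and you do not say which one the ``diagonals'' provide.
\item An induction through $(p-q)^{N-1}\times(p-q)$ has nothing to start from, since the first case $N=g+1$ is exactly the statement to be proved.
\item Your target $N>g$ is stronger than what the paper establishes; there it is only expected (cf.\ Remark~\ref{remark:increasing_k}).
\end{enumerate}

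The missing idea is to let the linear equivalence $k\cdot p\sim q+D$ move points. Take $\mathrm{div}(\varphi)=k\cdot p-q-D$ with $D$ effective. The slices $\graphvarphi\cap(C\times\{t\})$, counted with multiplicity equal to the absolute slope (or the weight of the vertical edge), form a continuous family of $k$ points $A(t)=\{x_1(t),\dots,x_k(t)\}$. This family goes from $\{p,\dots,p\}$ for $t\ll0$ to $\{q,d_1,\dots,d_{k-1}\}$ for $t\gg0$. One then takes $W\subset C^k\times\R$ with fibres $W_t=\sum_{\sigma\in S_k}(x_{\sigma(1)}(t)-q)\times\dots\times(x_{\sigma(k)}(t)-q)$. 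This equals $k!\cdot(p-q)^k$ at the bottom and vanishes at the top precisely because one of the moving points has reached $q$. The sum over $S_k$ is forced because the $x_i(t)$ are only defined up to permutation past vertices of $\graphvarphi$.

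The real work is then balancing $W$. This involves weights $\mu(E)/c(E)$ with $c(E)=\operatorname{lcm}_j|s_j|$, and the need for $\varphi$ to have nowhere-zero slopes away from the leaves (which is why $k$ is enlarged via Lemma~\ref{voltage_functions}). It also requires extra horizontal edges $L(y)$ over the leaves, and an alternating binomial cancellation at $t=\varphi(q)$, where $q$ enters $A(t)$. None of this, nor any substitute for it, appears in your proposal.
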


This is a tropical version of a Theorem proved independently by Voevodsky in \cite{voevodsky_nilpotence} and Voisin in \cite{voisin_symmetric_cycles}, about algebraic cycles in smooth projective varieties over $\mathbb{C}$. 

\begin{theorem}\cite[Corollary 3.2]{voevodsky_nilpotence}
	Let $Y$ be a smooth projective variety over a field $k$, and $Z\subset Y$ an algebraically trivial cycle of dimension $d$. Then there exits a $k>0$ such that $Z^k=0\in CH_{kd}(Y^k)_{\Q}$.
\end{theorem}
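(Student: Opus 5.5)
The plan is to reduce to the case of a curve, and then to a statement about zero-cycles on the Jacobian.

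\textbf{Step 1: reduction to a curve.} By definition of algebraic triviality, after replacing $Z$ by a suitable representative, there are:
\begin{itemize}
\item a smooth projective connected curve $C$ over $k$ (possibly after a finite extension, which is harmless with $\Q$-coefficients by a norm argument);
\item two points $p,q\in C$;
\item a correspondence $\Gamma\in CH(C\times Y)$,
\end{itemize}
such that $Z=\Gamma_*(p-q)$. Taking exterior products of correspondences gives
$$Z^k=(\Gamma^{\times k})_*\big((p-q)^{\times k}\big).$$
So it suffices to show that $(p-q)^{\times k}=0$ in $CH_0(C^k)_\Q$ for $k$ large. This is exactly the curve statement the abstract advertises.

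\textbf{Step 2: pass to the symmetric product.} Let $\pi\colon C^k\to C^{(k)}$ be the quotient map. The cycle $(p-q)^{\times k}$ is invariant under the symmetric group $S_k$, so with $\Q$-coefficients
$$(p-q)^{\times k}=\tfrac{1}{k!}\,\pi^*\pi_*\big((p-q)^{\times k}\big).$$
It is therefore enough to show that
$$\pi_*\big((p-q)^{\times k}\big)=\sum_{j=0}^k(-1)^{k-j}\binom{k}{j}\,[\,jp+(k-j)q\,]$$
vanishes in $CH_0(C^{(k)})_\Q$.

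\textbf{Step 3: descend to the Jacobian.} Take $k\geq 2g-1$. Then the Abel--Jacobi map $C^{(k)}\to J=\mathrm{Pic}^k(C)$ is a projective bundle. Zero-cycles in a single fibre $\mathbb{P}^{k-g}$ are rationally equivalent when they have equal degree. Hence pushforward gives an isomorphism $CH_0(C^{(k)})_\Q\cong CH_0(J)_\Q$, and it suffices to show the image vanishes.

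Identify $\mathrm{Pic}^k$ with $J$ via $D\mapsto D-kq$, and set $x=p-q\in J$. The image becomes
$$\sum_j(-1)^{k-j}\binom kj[jx]=([x]-[0])^{*k},$$
the $k$-th Pontryagin power of $[x]-[0]$.

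\textbf{Step 4 (main obstacle): Pontryagin nilpotence.} The key input is Bloch's theorem (equivalently, Beauville's decomposition via the multiplication maps $[n]_*$): for any point $x$ of an abelian variety $A$ of dimension $g$,
$$([x]-[0])^{*(g+1)}=0\quad\text{in }CH_0(A)_\Q.$$
Since this uses no tools from the paper, I would either cite it or reprove it. For a reproof, decompose $CH_0(A)_\Q=\bigoplus_s CH_0^{(s)}$ into eigenspaces of $[n]_*$ with eigenvalue $n^s$, where $0\le s\le g$. The element $[x]-[0]$ lies in $\bigoplus_{s\ge1}$, and Pontryagin product adds the grading, so a $(g+1)$-fold product must vanish.

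\textbf{Conclusion.} Take $k\geq\max(2g-1,\,g+1)$. Then $(p-q)^{\times k}=0$ in $CH_0(C^k)_\Q$, and therefore $Z^k=0$ in $CH_{kd}(Y^k)_\Q$ by Step 1.
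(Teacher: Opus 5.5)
Your argument is correct. The paper does not prove this statement; it is quoted from Voevodsky. Your Step~1 is the same reduction the paper uses in its proof of Theorem~\ref{thm:tropical_voevodsky}: push $(p-q)^{\times k}$ forward along $\Gamma^{\times k}$. Your Steps~2--3 follow the route the paper attributes to Voevodsky and Voisin, via $CH_0(\mathrm{Sym}^k C)\cong CH_0(J(C))$ for $k$ large, and you close the argument with Bloch's theorem on Pontryagin nilpotence.

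Where you genuinely differ from the paper is the curve case, which the paper does without symmetric powers or the Jacobian. It picks one function $\varphi$ with $\mathrm{div}(\varphi)=k\cdot p-q-D$, $D$ effective. The level-set family $A(t)$ then runs from $kp$ (for $t\ll0$) to $q+D$ (for $t\gg0$), as a path in the tropical linear system $|kp|$. The paper builds $W$ with fibres $W_t=\sum_{\sigma\in S_k}\prod_i(x_{\sigma(i)}(t)-q)$. These equal $k!\,(p-q)^k$ at one end and vanish at the other, because $q$ occurs in $q+D$.

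Transposed to the classical setting, this gives a shorter proof than yours:
\begin{itemize}
\item For $k\geq g+1$, Riemann--Roch gives such an effective $D$.
\item $[kp]$ and $[q+D]$ are two points of the projective space $|kp|\subset C^{(k)}$, hence equal in $CH_0(C^{(k)})$.
\item Applying the symmetrizing correspondence $\sum_i d_i\mapsto\sum_{\sigma\in S_k}\prod_i(d_{\sigma(i)}-q)$ (exterior products) from $C^{(k)}$ to $C^k$ gives $k!\,(p-q)^{\times k}=0$ in $CH_0(C^k)$, integrally, for every $k\geq g+1$.
\end{itemize}
This needs neither the projective-bundle structure over $J$ nor Bloch or Beauville.

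Your division by $k!$ in Step~2 is the same symmetrization factor as the $k!$ in the paper's statement. Your route buys a quick conclusion from standard theorems. It costs the weaker bound $k\geq 2g-1$ and relies on the global structure of $C^{(k)}\to J$ and on nilpotence in $CH_0(J)_{\Q}$. These are exactly the ingredients the paper says are hard to transport to tropical geometry, which is why it builds the equivalence explicitly from $\varphi$.
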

\begin{theorem}\cite[Statement 2.3.1]{voisin_symmetric_cycles}
	Let $Y$ be a variety, and $Z\subset Y\times Y$ a cycle of codimesion $n=\dim(Y)$ which is algebraically equivalent to zero. Then $Z$ is nilpotent in $CH^n(Y\times Y)$ for the product which is composition of correspondences.
\end{theorem}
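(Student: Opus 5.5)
The plan is to reduce everything to the vanishing of a single explicit cycle on a power of a curve, and then transport that vanishing to $Y\times Y$ by functoriality of correspondences. We work with $\Q$-coefficients throughout; as in Voevodsky's statement this is the natural setting, and we read nilpotence in $CH^n(Y\times Y)$ this way. The price of allowing only integer coefficients would be the factorial factor visible in Theorem \ref{main_theorem}.

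\emph{Step 1 (parametrising $Z$).} By definition of algebraic triviality, and after using the standard reduction to curves, there is a smooth projective curve $C$ with points $p,q\in C$, together with a cycle $\Gamma\subset C\times Y\times Y$, such that
\[ Z=\Gamma_*(p-q)=\Gamma_p-\Gamma_q . \]
Set $x:=p-q\in CH_0(C)$. Composition of correspondences is multilinear and compatible with exterior products. Hence the $k$-fold composite $Z^{\circ k}$ can be written as
\[ Z^{\circ k}=\Phi_*\bigl(x^{\times k}\bigr), \]
where $x^{\times k}=\mathrm{pr}_1^*x\cdots\mathrm{pr}_k^*x\in CH_0(C^k)$. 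Here $\Phi\subset C^k\times Y\times Y$ is the correspondence obtained as follows: pull back $\Gamma$ along the $k$ coordinate projections, intersect over the intermediate copies of $Y$, and project away from them. Checking this identity is a routine base-change and projection-formula computation. It therefore suffices to show that $x^{\times k}=0$ in $CH_0(C^k)_\Q$ for $k$ large.

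\emph{Step 2 (symmetrisation).} The cycle $x^{\times k}$ is invariant under the symmetric group $\mathfrak S_k$ acting on $C^k$. Let $\pi\colon C^k\to C^{(k)}$ be the quotient map. With rational coefficients we then have
\[ x^{\times k}=\tfrac1{k!}\,\pi^*\pi_*\bigl(x^{\times k}\bigr), \]
so it is enough to show that $\pi_*(x^{\times k})=0$ in $CH_0(C^{(k)})_\Q$.

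\emph{Step 3 (the main obstacle).} For $k\ge 2g-1$, where $g$ is the genus of $C$, the Abel--Jacobi map $C^{(k)}\to J(C)$ is a projective bundle. Consequently $CH_0(C^{(k)})\cong CH_0(J(C))$. Expand
\[ x^{\times k}=\sum_i(-1)^{k-i}\binom{k}{i}\,\pi^*[\,ip+(k-i)q\,]/\dots \]
suitably symmetrised. Then $\pi_*(x^{\times k})$ maps to
\[ \sum_{i=0}^{k}(-1)^{k-i}\binom ki\,[\,i\,a\,]\in CH_0(J)_\Q, \qquad a=\mathrm{AJ}(p-q). \]
This is the $k$-th finite difference of the map $i\mapsto[ia]$. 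By Bloch's result, or by Beauville's Fourier decomposition of $CH_0(J)_\Q$, the map $i\mapsto[ia]$ is polynomial in $i$ of degree at most $g$. Its $k$-th difference therefore vanishes once $k>g$.

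Combining Steps 1--3, $Z^{\circ k}=0$ for $k>\max(g,2g-2)$. The delicate point is Step 3, namely the polynomiality of $i\mapsto[ia]$ in $CH_0(J)_\Q$. If I wanted to avoid Beauville's theorem, I would instead try to exhibit an explicit rational equivalence on $C^{(k)}$, using the linear system $|kp|$ versus $|kq|$. This is the route the tropical proof of Theorem \ref{main_theorem} suggests.
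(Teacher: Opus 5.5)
Your argument is correct. It is essentially the classical proof that the paper attributes to Voisin and Voevodsky: you reduce, via the family of composed correspondences $\Gamma^{(k)}\subset C^k\times(Y\times Y)$, to the vanishing of $(p-q)^{\times k}$ in $CH_0(C^k)_{\mathbb{Q}}$, and then you symmetrise and use $CH_0(C^{(k)})\cong CH_0(J(C))$ for $k\geq 2g-1$. The only variation is in the last step: the paper merely alludes to the isomorphisms on $CH_0$ of symmetric powers induced by adding a point, whereas you finish explicitly with Bloch's $I^{*(g+1)}=0$ (or Beauville's decomposition). That is legitimate, since your alternating sum $\sum_i(-1)^{k-i}\binom{k}{i}[ia]$ is exactly $([a]-[0])^{*k}$ in $CH_0(J(C))$.
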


\begin{remark}
	While Voisin's statement is formulated differently, the key input to its proof -- namely the fact that for any degree zero cycle $z$ on a curve $C$, $z^k$ is rationally trivial in $C^k$ -- implies the statement as phrased by Voevodsky. This is seen by using $\Gamma^{*k}\subset C^k\times (Y\times Y)^k$ instead of $\Gamma^{(k)}\subset C^k\times(Y\times Y)$ in the notation of the proof of \cite[Statement 2.3.1]{voisin_symmetric_cycles}. Then one obtains that $Z^k$ is rationally trivial in $Y^{2k}$ for $k$ sufficiently large, and taking $Z$ to be $Z'\times Z'$ for a cycle $Z'\subset Y$ recovers Voevodsky's formulation. 
\end{remark}

Theorem \ref{main_theorem}, as well as Voevodsky and Voisin's corresponding statements, follow from the special case where $Y=C$ is a smooth compact tropical curve, and $Z=p-q$ the difference of two points in $C$:

\begin{proposition}\label{main_prop}
		Let $C$ be a smooth compact tropical curve of genus $g$. For any two points $p,q\in C$, $k!\cdot(p-q)^{k}$ is rationally trivial for $k$ sufficiently large. 
\end{proposition}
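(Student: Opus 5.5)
The plan is to follow the classical strategy of Voisin: pass from $C^k$ to the symmetric product $C^{(k)}$, then to the tropical Jacobian $J(C)$ via the Abel--Jacobi map, and show the relevant $0$-cycle dies there. First I expand
\[
(p-q)^k=\sum_{S\subseteq\{1,\dots,k\}}(-1)^{k-|S|}\,[x_S],
\]
where $x_S\in C^k$ has coordinate $p$ in the slots indexed by $S$ and $q$ elsewhere. This cycle is invariant under the symmetric group $\mathfrak S_k$. Let $\pi\colon C^k\to C^{(k)}$ be the quotient, of degree $k!$. For an $\mathfrak S_k$-invariant cycle $W$ one has $\pi^*\pi_*W=k!\cdot W$ on the level of the fibres, which is where the factor $k!$ comes from. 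Hence it suffices to show that
\[
\pi_*(p-q)^k=\sum_{i=0}^k(-1)^{k-i}\binom{k}{i}[D_i],\qquad D_i=ip+(k-i)q,
\]
is rationally trivial in $C^{(k)}$. Then pulling back a rational equivalence along $\pi$ gives $k!\,(p-q)^k\sim 0$. I will need to check that tropical pullback along $\pi$ is defined and compatible with rational equivalence. Since $\pi$ is a finite quotient of smooth tropical spaces, I expect to do this by hand via local charts.

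Second, I use the tropical Abel--Jacobi map $\alpha\colon C^{(k)}\to \mathrm{Pic}^k(C)\cong J(C)$, a real torus of dimension $g$. By tropical Riemann--Roch, for $k\ge 2g-1$ every fibre $|D|$ is a linear system of dimension $k-g$. It is a compact tropical polyhedral space that is (tropically) a contractible, rational family: any two divisors in $|D|$ are joined by chains of tropical rational functions' level-sets. So points in a common fibre are rationally equivalent in $C^{(k)}$.

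From this I want the key structural step: a $0$-cycle on $C^{(k)}$ is rationally trivial as soon as its image in $\mathrm{CH}_0(J(C))$ is. Concretely, I plan to choose a section-like family: fix a reference divisor $E$ of degree $k-1$ and use the map $C\to C^{(k)}$, $x\mapsto x+E$, together with the fibre equivalences. This moves every $[D_i]$ onto a fixed curve or lower-dimensional locus of controlled shape.

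Third, I must show that the alternating sum of the $\alpha(D_i)$ vanishes in $\mathrm{CH}_0(J(C))$. These points are $c+i v$ with $v=\alpha(p)-\alpha(q)$, i.e. collinear, and the sum is a $k$-th finite difference. The approach is to prove a tropical analogue of Bloch's result that $I^{*(g+1)}=0$ for the Pontryagin product on $\mathrm{CH}_0$ of a $g$-dimensional torus. I would try first to do this directly for $g=1$, where $C$ is essentially a circle with legs and everything is explicit. In that case $[c+2v]-2[c+v]+[c]\sim0$ should come from an explicit tropical rational function on $C^{(2)}$, or on $C\times C$ fibred over $J$. I would then induct using the group structure, writing $(\,[v]-[0]\,)^{*(g+1)}$ as a product of differences of points on $g+1$ translates of the Abel--Jacobi curve. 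Each factor is degree-$0$ on a curve, and the product lands in a $(g+1)$-fold sum map $C^{g+1}\to J$ whose fibres are positive-dimensional. This gives enough room to build rational equivalences. Taking $k\ge\max(2g-1,g+1)$ then finishes.

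The main obstacle is this third step: establishing the Bloch-type vanishing, and more basically the comparison $\mathrm{CH}_0(C^{(k)})\to\mathrm{CH}_0(J)$, in the tropical setting. No projective-bundle formula is available off the shelf there, so the rational equivalences will likely have to be written down explicitly as tropical rational functions on products of $C$. This matches the abstract's promise of an explicit construction.
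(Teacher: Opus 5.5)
There is a genuine gap: your Step 3, and the comparison it relies on, is where all the work lies, and neither part is established.

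\textbf{Why the reduction through $J(C)$ stalls.} After Step 1 you must kill $\sum_i(-1)^{k-i}\binom{k}{i}[D_i]$ in $C^{(k)}$. The $D_i$ lie in \emph{different} fibres of the Abel--Jacobi map, since $\alpha(D_i)=c+iv$, so fibrewise rational equivalence cannot do this on its own. You therefore need two further results.
\begin{enumerate}
\item[(a)] Injectivity of $\mathrm{CH}_0(C^{(k)})\to\mathrm{CH}_0(J(C))$. This means lifting rational equivalences on $J(C)$ through a map whose fibres are tropical linear systems. These change combinatorial type and need not be pure-dimensional, and there is no projective-bundle structure.
\item[(b)] A tropical Bloch vanishing $I^{*(g+1)}=0$ for the Pontryagin product on $\mathrm{CH}_0$ of a real torus.
\end{enumerate}
Neither is available. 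Your sketch of (b) (induct over translates of the Abel--Jacobi curve, since there is ``enough room'') is not yet an argument. The paper flags this Voisin/Voevodsky route as precisely the one that does not translate tropically. Step 1 also has a problem: $C^{(k)}$ is not a smooth tropical variety near the diagonals (the quotient of $\R^2$ by the swap is a half-plane). So a pullback $\pi^*$ compatible with rational equivalence is not available off the shelf.

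\textbf{The missing idea: stay inside one linear system.} Since $k>g$, $kp-q$ is linearly equivalent to an effective $D$. Hence $kp$ and $q+D$ are two points of the same linear system $|kp|$. Your Step 2 already observes that such points are joined by a one-parameter family of level sets. Rather than pushing $(p-q)^k$ down to $C^{(k)}$, push this single family \emph{up} along the correspondence
\[
x_1+\dots+x_k\;\longmapsto\;\sum_{\sigma\in S_k}(x_{\sigma(1)}-q)\times\dots\times(x_{\sigma(k)}-q).
\]
It sends $kp$ to $k!\,(p-q)^k$ and sends any divisor containing $q$ to $0$.

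\textbf{How the paper does this.} It takes $\varphi$ with $\mathrm{div}(\varphi)=kp-q-D$, modified to have non-zero slopes at the cost of enlarging $k$. From the level sets of the two-ended balanced graph of $\varphi$ it reads off a family $A(t)=\{x_1(t),\dots,x_k(t)\}$. It then builds $W\subset C^k\times\R$ with fibres $W_t=\sum_\sigma\prod_i\bigl(x_{\sigma(i)}(t)-q\bigr)$, assigning weights and checking balancing by hand, including extra edges over the leaves. No symmetric product, Jacobian, or Bloch-type vanishing enters. The factor $k!$ arises from summing over orderings, which is the same mechanism as in your Step 1.
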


Interestingly, our proof is very different in nature from those of Voevodsky and Voisin, which both rely on the fact that for $k$ sufficiently large compared to $g(C)$, $CH_0(\mathrm{Sym}^k(C))\cong CH_0(\mathrm{Jac}(C))$, and the map $x:CH_0(\mathrm{Sym}^{k}(C))\rightarrow CH_0(\mathrm{Sym}^{k+1}(C))$ given by a point $x\in C$ are isomorphisms. Tropically, fibres of the map $\mathrm{Sym}^k(C)\rightarrow \mathrm{Jac}(C)$ are known to be vastly complicated objects (see \cite{mikhalkin_Zharkov_trop_curves_jac_theta_funct,baker_norine_RR_on_finite_graphs,gathmann_kerber_riemann_roch,baker_faber_metric_abel_jacobi,ABKS2014,numerical_equivalence_tropical,GrossShokriehTothmeresz2022,cools_draisma_payne_robeva_brill_noether,haase_musiker_yu_linear_systems} amongst others), making a direct translation of their argument into tropical geometry difficult.

 Our proof involves explicitly constructing the rational equivalence $k\cdot (p-q)^k\sim0$, without any explicit reference to symmetric powers of $C$ or its Jacobian. A sketch of the proof of Proposition \ref{main_prop}, and a proof of Theorem \ref{main_theorem} using Proposition \ref{main_prop}, is given at the beginning of Section \ref{section:results}. 

\begin{remark}
	The bulk of our constructions in this paper are aimed at proving Proposition \ref{main_prop}. In particular, it does not require the full technical background of abstract tropical cycles, and a reader familiar with tropical curves but not their higher-dimensional generalisations is well-equipped to understand most of our paper. 
\end{remark}

In \cite{voevodsky_nilpotence}, Voevodsky coined the term \textit{smash-nilpotent} to refer to cycles which, after taking sufficiently many self-products, become rationally trivial. For tropical cycles, this translates to:

\begin{definition}\label{def:smash_nilpotence}
	A tropical cycle $Z\subset Y$ is \textit{smash-nilpotent} if there exists $k_0\in\Z_{>0}$ such that, for all $k\geq k_0$, $k!\cdot Z^k\subset Y^k$ is rationally trivial. 
\end{definition}

Theorem \ref{main_theorem} can thus be rephrased as \enquote{algebraically trivial tropical cycles are smash-nilpotent}. Voevodsky's paper \cite{voevodsky_nilpotence} goes a step further by formulating the following conjecture, still the subject of active research today (see for example \cite{kahn_smash_nilpotence_remarks,kahn_sebastian_abelian_3folds,laterveer_smash_nilpotent_examples,sebastian_products_of_curves,sebastian_rationally_connected}):

\begin{conjecture}[Voevodsky's conjecture, \cite{voevodsky_nilpotence}]
	A cycle $Z\subset Y$ is smash-nilpotent if and only if it is numerically trivial. 
	\label{conjecture}
\end{conjecture}

Given our main Theorem \ref{main_theorem}, it seems natural to formulate a tropical version of Voevodsky's conjecture. Tropical numerical equivalence has been defined in various context (see e.g. \cite{numerical_equivalence_tropical}), but to the best of our knowledge never in the presence of a boundary divisor. For this reason, a literal translation of Voevodsky's conjecture requires the assumption that $Y=Y^{[0]}$ has empty boundary, in which case we can formulate it as follows:
\begin{conjecture}[Tropical Voevodsky's conjecture]
	A tropical cycle $Z\subset Y$ is smash-nilpotent if and only if it is numerically trivial.
	\label{tropical_conjecture}
\end{conjecture}
In the presence of a boundary divisor, the weaker conjecture that any \textit{homologically} trivial tropical cycle is smash-nilpotent still makes sense and seems highly non-trivial. 

A hope is for this tropical formulation to be amenable to explicit constructions in a way that could inform efforts to understand Voevodsky's original conjecture. Establishing precise statements about the implication between Voevosky's classical conjecture and its tropical version in various geometric settings would be valuable. Namely, addressing the question of how realizability/tropicalization techniques could prove/disprove Conjecture \ref{conjecture} from a proof/counter-example of Conjecture \ref{tropical_conjecture}. 

An interesting case study for the tropical Conjecture \ref{tropical_conjecture} is the \textit{tropical Ceresa cycle} of a tropical curve $C$: a nullhomologous $1$-cycle $Z(C)$ in the Jacobian $J(C)$ construted by Zharkov in \cite{zharkov_tropical_ceresa}. Zharkov proves that for generic $C$ of genus $3$ whose underlying graph is the complete graph on $4$ vertices ($K4$), the Ceresa cycle $Z(C)$ is \textit{not} algebraically trivial -- following Ceresa's classical result. Classically, Voevodsky's conjecture is known to be true for abelian $3$-folds by work of Kahn--Sebastian in \cite{kahn_sebastian_abelian_3folds}; in particular the Ceresa cycle of \textit{any} genus $3$ curve is smash-nilpotent. Therefore, a natural expectation is that in particular, the tropical Ceresa cycle of any genus $3$ tropical curve should be smash-nilpotent. It was brought to out attention by B. Kahn that their proof provides an explicit bound for the nilpotence level of cycles, which in the case of the Ceresa cycle is $21$. Their paper \cite{kahn_sebastian_abelian_3folds} appeals to the motivic machinery of Kimura finiteness theory, which does not appear to translate immediately to the tropical geometry with its current tools. Nevertheless, a more explicit construction of a rational equivalence $k!\cdot Z(C)^k\sim0$ in $J(C)^k$ seems possible using ideas from our construction. For instance, in Section \ref{section:examples_1} we prove that the Ceresa cycle of any hyper-elliptic curve is smash-nilpotent (this is an application of Theorem \ref{main_theorem}, since Ceresa cycles of hyperelliptic curves are algebraically trivial).

Potential applications to Voevodsky's conjecture were not our original motivation for proving Theorem \ref{main_theorem}. Rather, it was our study of mirror symmetry, which relates algebraic cycles in an algebraic variety to Lagrangian submanifolds in a mirror symplectic manifold, that turned our attention to Voevodsky/Voisin's result. This fits into a more general programme aimed at translating results about algebraic cycles (in this case, Voevodsky/Voisin's Theorem) into new results about Lagrangians, using mirror symmetry. Work of Sheridan--Smith has demonstrated that theorems about Chow groups can be translated into theorems about (cylindrical) Lagrangian cobordism groups in the mirror \cite{nick_ivan1,nick_ivan_K3_rational_equivalence,alvaro_bielliptic}. In our paper \cite{corradini}, we introduced the notion of \textit{algebraic Lagrangian cobordism (or \textit{algcobordism}) groups} as symplectic counterparts to Griffiths groups, with the purpose of translating results about algebraic equivalence of cycles into results about Lagrangians. From this perspective, the naive expectation from Voevodsky/Voisin's Theorem is surprising. It suggests that Lagrangians which are trivial in the alg-cobordism group of a symplectic manifold are \enquote{Lagrangian smash-nilpotent} -- that is, they become trivial in the (cylindrical) Lagrangian cobordism group after taking sufficiently many self-products. Our Theorem \ref{main_theorem} is a natural first step to proving a statement of this nature about Lagrangians. This is because (algebraic) Lagrangian cobordisms are by definition constructed using Lagrangian torus fibrations over tropical abelian varieties, and therefore could be obtained from tropical algebraic/rational equivalences by a \enquote{Lagrangian lift} construction. This will be the subject of future work with D. Koževnikov, in which we prove the following result:
\begin{theorem}[In progress, \cite{danil_and_me}]
	Let $L$ be a Lagrangian submanifold in a symplectic manifold $(X,\omega)$ which is alg-nullcobordant. Then for $k\gg0$, $k!\cdot L^k\subset (X^k,\omega^{\oplus k})$ is cylindrical nullcobordant.
\end{theorem}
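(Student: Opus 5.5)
The plan is to deduce the statement from the tropical result, Proposition \ref{main_prop}, by a \enquote{Lagrangian lift} construction rather than arguing symplectically from scratch. First, unpack the definition of alg-nullcobordant from \cite{corradini}. I expect it to provide a tropical curve $C$ (or a tropical abelian variety containing one), two points $p,q\in C$, and a family of Lagrangians $\{L_t\}_{t\in C}$. This family should be realised as a Lagrangian correspondence $\Gamma\subset T\times X$, where $T\to C$ is (a neighbourhood of) a Lagrangian torus fibration over $C$, such that $L$ is cobordant to $\Gamma\circ(\mathcal{L}_p-\mathcal{L}_q)$. Here $\mathcal{L}_p,\mathcal{L}_q$ denote the fibre Lagrangians (lifts of the points). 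After reducing to the case of a single such pair, which should be possible using linearity of products modulo cylindrical cobordism, the external product gives
\[
L^k \simeq \Gamma^{*k}\circ(\mathcal{L}_p-\mathcal{L}_q)^{k}\quad\text{in } X^k,
\]
with $\Gamma^{*k}\subset T^k\times X^k$ the product correspondence. This mirrors the use of $\Gamma^{*k}$ in the Remark following Voisin's theorem.

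Second, apply Proposition \ref{main_prop}: for $k\gg0$ there is an explicit tropical rational equivalence $k!\cdot(p-q)^k\sim0$ in $C^k$. This equivalence is a finite sum of tropical cycles in $C^k\times\mathbb{R}$ (or boundaries of tropical rational functions) whose ends at $\pm\infty$ are the two sides of the equivalence. Lift each such tropical cycle to a Lagrangian in $T^k\times\mathbb{C}$, with cylindrical ends over the lifts of the ends, using the standard lifting of balanced tropical cycles to Lagrangians in torus fibrations (Mikhalkin/Matessi-type pairs-of-pants lifts at vertices, smoothed along edges). This yields a cylindrical cobordism in $T^k$ between $k!\cdot(\mathcal{L}_p-\mathcal{L}_q)^k$ and $0$. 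The factor $k!$ should be absorbed by taking the multiplicities of the tropical cycles as multiple parallel copies. Finally, compose with $\Gamma^{*k}$. Geometric composition of a cylindrical cobordism with a Lagrangian correspondence is again a cylindrical cobordism, once we perturb to achieve transversality and embeddedness (Wehrheim--Woodward). This gives $k!\cdot L^k\sim 0$ in $(X^k,\omega^{\oplus k})$.

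The main obstacle I expect is the lifting step. The rational equivalence from Proposition \ref{main_prop} lives in the higher-dimensional space $C^k$, and its cells have non-generic, non-transverse vertex structure. Lifting to embedded, exact (or at least unobstructed) Lagrangians with correct cylindrical ends is delicate there, especially since the lifts of $p$ and $q$ must be compatible with the chosen fibre Lagrangians. I would first try to show that the explicit construction in Section \ref{section:results} decomposes into products of one-dimensional moves along edges of $C$, and loops around cycles of $C$, in individual factors. Each such move lifts by a product of a known one-dimensional lift (a surgery cobordism or a Lagrangian suspension along a flux-zero path) with trivial cylinders. That would reduce the problem to lifting finitely many local models and checking that they glue. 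The secondary obstacle is embeddedness of the composition with $\Gamma^{*k}$, which I would handle by a generic Hamiltonian perturbation of the cobordism in the $T^k$ directions.
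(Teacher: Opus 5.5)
The paper contains no proof of this statement. It is announced as work in progress (\cite{danil_and_me}). The only indication given is that the proof starts from a Lagrangian lift of the tropical rational equivalence $W\subset C^k\times\R$ of Section~\ref{section:constructing_W}, with $\varphi$ chosen as in the remark after Proposition~\ref{non-zero_slopes}, and that unobstructedness is treated there. Your outline has the same shape: lift $W$, then transport it through the family via $\Gamma^{*k}$, as in the proof of Theorem~\ref{thm:tropical_voevodsky}. But every load-bearing step is asserted rather than proved, and the one concrete idea you offer for the main step would fail.

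\textbf{The lifting step.} You cannot reduce the lift to one-dimensional moves in individual factors times trivial cylinders.
\begin{itemize}
\item A cycle $\{z_1\}\times\dots\times\gamma_j\times\dots\times\{z_k\}$ with $\gamma_j\subset C\times\R$ has ends differing by an element of $Z_0(C)^{\otimes(j-1)}\otimes R(C)\otimes Z_0(C)^{\otimes(k-j)}$, where $R(C)$ is the group of rationally trivial $0$-cycles. Loops at a fixed level contribute nothing to the ends.
\item So sums of such moves only witness relations in the kernel of $Z_0(C)^{\otimes k}\to CH_0(C)^{\otimes k}$.
\item By tropical Abel--Jacobi, $CH_0(C)\cong\Z\oplus J(C)$ with $J(C)\cong(\R/\Z)^g$ as a group. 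For $k\geq2$, $J(C)^{\otimes k}$ is a $\Q$-vector space, and the image of $(p-q)^{\otimes k}$ there is non-zero whenever $p-q$ is non-torsion in $J(C)$.
\item Hence $k!\cdot(p-q)^{\otimes k}\neq0$ in $CH_0(C)^{\otimes k}$, so no such decomposition of any witness exists.
\end{itemize}
This is the tropical shadow of the fact that moving $\mathcal{L}_p$ towards $\mathcal{L}_q$ inside a single factor has non-zero flux.

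The point of $W$ is that its edges $F$ move all non-$q$ coordinates simultaneously along level sets of $\varphi$, so the cancellation is collective. The lift therefore has to be built for $W$ itself. Its local structure is far from the smooth trivalent weight-one vertices where Mikhalkin/Matessi-type lifts are available. Even when $\graphvarphi$ is trivalent, $W$ has:
\begin{itemize}
\item vertices of high valency, produced by repeated coordinates and the $S_k$-symmetrisation;
\item edge weights $\mu(E)/c(E)$ of both signs and larger than one;
\item null edges $L(y)$ lying in a single level $C^k\times\{\varphi(y)\}$.
\end{itemize}
``Absorbing $k!$ by parallel copies'' addresses none of this.

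\textbf{The other steps.}
\begin{itemize}
\item \emph{Composition.} Geometric composition with $\Gamma^{*k}$ is generically only an immersion with transverse double points. These survive small perturbations, so embeddedness is not a matter of genericity; Wehrheim--Woodward assume it as a hypothesis.
\item \emph{Unobstructedness.} The cylindrical cobordism group only makes sense for a class of unobstructed Lagrangians. You must check that the lift and its composite stay in that class; the paper singles this out as a separate problem.
\item \emph{Definition.} You guess the definition of alg-nullcobordance. According to the introduction, alg-cobordisms are built from torus fibrations over tropical abelian varieties, not curves. You therefore need either a reduction to a curve base or an analogue of Proposition~\ref{main_prop} on the abelian base; neither is automatic tropically.
\item \emph{Reduction to a single family.} This requires cylindrical cobordism to be compatible with external products and permutations of factors. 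Once that holds, the binomial argument works, since $i!\mid N!$.
\end{itemize}
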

Questions of unobstructedness of the Lagrangians in question will be adressed in our work. The starting point for our proof is Theorem \ref{main_theorem}, or more specifically, a Lagrangian lift of the rational equivalence constructed to prove Proposition \ref{main_prop}.

\begin{remark}
	Let us comment on the presence of the $k!$-factor in the statement of Theorem \ref{main_theorem}. It does not appear in Voevosdky/Voisin's original statements, or in Voevodsky's original definition of smash-nilpotence for the following reason. In the classical setting, a Theorem of Roitman states that for a smooth projective variety $X$  over an algebraically closed field of characteristic zero, the Albanese map $$Alb_X:CH_0(X)_{hom}\longrightarrow Alb(X)$$ induces an isomorphism on all torsion subgroups. One readily verifies that $k!\cdot(p-q)^k$ lies in $\ker(Alb_{C^k})$ for points $p,q$ on a curve $C$, which by Roitman's Theorem is torsion-free, hence $(p-q)^k\sim0$. Although we do not address it, the question of whether a tropical version of Roitman's Theorem holds seems worthwhile investigating, and we expect the $k!$-factor could be removed from Theorem \ref{main_theorem} and Definition \ref{def:smash_nilpotence}.
\end{remark}

\begin{remark}
	We also expect the assumption that $Z$ is contained in $Y^{[0]}\subset Y$, i.e. that it does not intersect the boundary divisor, can be removed. In fact, our proof would go through as such. The missing piece comes from the literature on tropical intersection theory, where the proof of Theorem \ref{thm:tropical_voevodsky} in Section \ref{section:results} uses properties of the tropical intersection product which have only been proved away from the divisor. However, for cycles which \textit{are} disjoint from the boundary divisor, our proof implies the stronger fact that the rational equivalence $k_0!\cdot Z^k\sim0$ itself is contained in $(Y^{[0]})^k\subset Y^k$, in a way we will make precise. 
\end{remark}

Section \ref{section:results} contains the proof of Theorem \ref{main_theorem}, starting with an outline of the general strategy which the reader may find helpful for contextualising some of the background introduced in Section \ref{section:background}. Section \ref{section:examples} provides explicit examples and applications of our construction.

\section*{Ackgnowledgements}
A crucial insight behind the proof of Theorem \ref{main_theorem} was inspired by exchanges with Richard Thomas, whom I warmly thank for his engagement and enthusiasm. I am also grateful to Ivan Smith for stimulating discussions about this project, and to Dhruv Ranganathan for helpful comments. I was supported by EPSRC grant EP/X030660/1 for the duration of this work. 

\section*{Notation}

Given a product $X_1\times \dots\times X_n$ of spaces, $\pi_{X_i}$ will always denote the projection onto the $X_i$-factor. 
Throughout this paper, we deal with products of tropical cycles: if $Z$ and $Z'$ are (tropical) cycles which can be written as a sums $Z_1+Z_2$ and $Z_1'+Z_2'$ respectively, then the product $(Z,Z'):=Z\times Z'$ is $$(Z_1+Z_2,Z_1'+Z_2'):=(Z_1,Z_1')+(Z_1,Z_2')+(Z_2,Z_1')+(Z_2,Z_2').$$

\section{Background}\label{section:background}

\subsection{Tropical cycles}

The main objects of tropical intersection theory are (abstract) \textit{tropical cycles}.
These were introduced in \cite{tropical_intersection_theory_allerman_rau}. In this paper, we use the language of \cite[Chapter 7]{tropical_book_mikhalkin_rau}, where tropical cycles live in ambient spaces called \textit{tropical varieties}. We only give a brief overview of the necessary concepts, and refer the reader to \cite{tropical_book_mikhalkin_rau} for a more comprehensive treatment. 

Tropical varieties are defined in \cite[Section 7]{tropical_book_mikhalkin_rau} as a particular kind of \textit{tropical space}. A tropical space is locally modelled on effective balanced open polyhedral sets in the tropical affine space $\mathbb{T}^n:=(\R\cup\{-\infty\})^n$. Following \cite[Definition 7.1.8]{tropical_book_mikhalkin_rau}, a \textit{tropical atlas} on a topological space $X$ is a collection of tuples $(U_i,\psi_i,V_i)_i$ such that the $U_i$ form an open cover of $X$, each $V_i\subset\mathbb{T}^{N_i}$ (for suitable $N_i$) is an effective (i.e. non-negatively weighted), balanced open polyhedral set, each $\psi_i:U_i\rightarrow V_i$ is a homeomorphism, and each transition map $\psi_i\circ\psi_j^{-1}$ is a tropical (i.e. integer affine) isomorphism respecting weights wherever it is defined. A \textit{tropical space} is a topological space together with an equivalence class of such atlases. A \textit{tropical variety} is a tropical space which is Hausdorff, of pure dimension, and of finite type (see \cite[Definition 7.4.14]{tropical_book_mikhalkin_rau} for the definition of finite type). It is said to be \textit{smooth} if it locally looks like a tropical plane in tropical projective space $\mathbb{TP}^n$ (see \cite[Definition 7.4.1]{tropical_book_mikhalkin_rau} for a precise definition). Crucially, smoothness forces all weights to be $1$.

\begin{remark}\label{remark:sedentarity}
	The balancing condition we refer to incorporates the notion of \textit{sedentarity}. Recall that tropicalising a variety requires choosing a boundary divisor, and sedentarity provides a combinatorial way of recording this choice of divisor. For $I\subset\{1,\dots,n\}$, write $R_I:=\{x\in\mathbb{T}^n : x_i=-\infty \text{ for } i\in I\}\cong\R^{n-|I|}$ for the corresponding torus orbit, so that these strata partition $\mathbb{T}^n$; the sedentarity of a point $x\in R_I$ is $\mathrm{sed}(x):=\vert I\vert$, i.e. the number of coordinates of $x$ equal to $-\infty$ (see \cite[Definition 3.1.1]{tropical_book_mikhalkin_rau} for details). 
 	Balancing at a face is then verified by summing over facets of the \textit{same} sedentarity.
	This is one of the main differences with the notion of tropical cycles from \cite{tropical_intersection_theory_allerman_rau}, which are defined using charts to $\R^n$ rather than $\mathbb{T}^n$, meaning all points have sedentarity zero by construction. The technical condition that a tropical variety is \textit{regular at infinity} (see \cite[Definition 7.2.4]{tropical_book_mikhalkin_rau} for a precise definition) guarantees that the $l$-sedentarity loci $X^{[l]}$ are tropical spaces of dimension $\dim(X)-l$ (see \cite[Definition 7.2.9]{tropical_book_mikhalkin_rau}). The main reason we impose this condition in Theorem \ref{main_theorem} is it guarantees that $X^{[0]}$ is an effective tropical cycle in the sense of \cite{tropical_intersection_theory_allerman_rau}. Denoting by $X_\infty:=X^{[1]}=X\backslash X^{[0]}$ the boundary divisor, one can think of regularity at infinity as a \enquote{tropical normal crossings condition}. For instance, a $1$-dimensional tropical space with finitely many $1$-valent vertices (which have sedentarity $1$) is regular at infinity, but this no longer holds if two sedentarity-$1$ vertices at the end of two edges collide to a sedentarity $2$-vertex. 
\end{remark}
A \textit{tropical $k$-cycle} $Z$ in a tropical variety $Y$ (\cite[Definition 6.1.1]{tropical_book_mikhalkin_rau}) is a weighted polyhedral set $Z\subset Y$ (i.e. a finite union of polyhedra) of pure dimension $k$ such that for any chart $U\subset V$, any polyhedral structure on $X\cap U$ and any codimension one cell $\tau\subset X\cap U$, the balancing condition is satisfied. If all its weights are positive, the cycle is said to be \textit{effective}, and can be called a \textit{subvariety} of $Y$. This definition of tropical cycle is designed to agree with the abstract cycle formalism of \cite{tropical_intersection_theory_allerman_rau}, with the important subtlety from Remark \ref{remark:sedentarity} coming from the possibility of points having positive sedentarity in our setting.

A \textit{tropical morphism} $f:X\longrightarrow Y$ of tropical spaces is a continuous map which restricts to a tropical morphism on suitable charts, i.e. it is locally an integer affine map. A tropical cycle $Z$ in $X$ can be pushed forward to a tropical cycle $f_*Z$ in $Y$ (see \cite[Proposition 6.2.1]{tropical_book_mikhalkin_rau} or \cite[Construction 1.6.3]{tropical_intersection_theory_allerman_rau}).

Tropical cycles can be intersected to form new cycles; such a construction was first given in \cite{tropical_intersection_theory_allerman_rau} for cycles in $\R^n$. A generalisation to tropical cycles in any smooth tropical variety was the subject of later work by Allerman in \cite{allermann_intersection_products_smooth_varieties}, although it does not generalise immediately to our notion of tropical cycles from \cite{tropical_book_mikhalkin_rau}, incorporating a boundary divisor. \cite[Section 5.3.1]{tropical_book_mikhalkin_rau} defines the intersection of a cycle of pure sedentarity with a toric boundary divisor which is Cartier (\cite[Definition 5.3.1]{tropical_book_mikhalkin_rau}), which we will appeal to. However, to the best of our knowledge, an intersection $Z\cdot Z'$ of two arbitrary tropical cycles in a tropical variety with boundary divisor has not yet been defined. The special case of cycles in tropical surfaces (i.e. $2$-dimensional tropical varieties) was understood by Shaw in \cite{shaw_tropical_surfaces}. Note that if the ambient variety $Y$ is regular at infinity and the cycles are contained in $Y^{[0]}$, i.e. they do not intersect the boundary divisor (see \cite[Section 7.2]{tropical_book_mikhalkin_rau}), then we are reduced to Allerman's setting, whose intersection product is shown in \cite[Theorem 2.9]{allermann_intersection_products_smooth_varieties} to be commutative, associative, and distributive with respect to addition of cycles amongst other desirable properties. Related constructions of intersection products by Shaw \cite{shaw_matroidal_fans_intersection_product} and subsequently Francois--Rau \cite{francois_rau_diagonal_matroid_varieties} in the setting of matroidal fans/varieties also generalise to smooth varieties.

\subsection{Tropical curves}

A tropical curve is a $1$-dimensional connected tropical variety \cite[Definition 8.1.1]{tropical_book_mikhalkin_rau}. In this paper, we will be interested in \textit{regular} tropical curves, i.e. smooth tropical curves of finite type, because compact such curves are the bases for tropical algebraic equivalences introduced in Section \ref{section:algebraic_equivalence}. 

\begin{remark}
	This notion of regular (compact) tropical curve coincides with that of (compact) tropical curve found in \cite{mikhalkin_Zharkov_trop_curves_jac_theta_funct}. They are allowed one-valent vertices of sedentarity $1$, which is not the case for $1$-dimensional tropical cycles in the sense of  \cite{tropical_intersection_theory_allerman_rau} since such a vertex could never be balanced in $\R^n$ (see Remark \ref{remark:sedentarity}).
\end{remark}

A one-dimensional balanced fan of valence $n$ has a unique smooth local model (up to isomorphism), namely the standard $n$-valent line $L(n)$ whose rays are $-e_1,\dots,-e_{n-1},e_1+\dots+e_{n-1}$, all of weight one. It follows that a smooth tropical curve is entirely determined by its underlying graph together with a choice of integer affine structure on each edge. In dimension one, this coincides with the datum of an inner metric or \textit{edge lengths} $l(E)\in\R\cup\{\infty\}$ for every edge $E$ (notice $l(E)=\infty$ if and only if $E$ is a leaf).

\begin{definition}
	A \textit{metric graph} is a finite connected graph equipped with a complete inner metric on the complement of its $1$-valent vertices (called \textit{ends}). 
\end{definition}

Given such a metric graph, a neighbourhood of each vertex of valence $n$ can be sent via metric-preserving charts to the vertex of $L(n)$, where the length of each primitive generator $e_i$ is set to $1$. Similarly, each finite edge $E$ can be sent via metric-preserving charts to an interval $[0,l(E)]$, and each leaf to $[0;+\infty)$.  One verifies this procedure gives a $1$-to-$1$ correspondence between the set of (isomorphism classes of) (compact) metric graphs and the set of (isomorphism classes of) regular (compact) tropical curves (see \cite[Proposition 8.1.5]{tropical_book_mikhalkin_rau} or \cite[Proposition 3.6]{mikhalkin_Zharkov_trop_curves_jac_theta_funct}).

\subsection{Rational equivalence}\label{section:rational_equivalence}

The definition of rational equivalence adapted to our setting, i.e. one incorporating a boundary divisor, is the one in \cite{tropical_book_mikhalkin_rau}. Much like in the classical setting, it identifies tropical cycles which are fibres of a flat family of cycles over the tropical projective space $\mathbb{TP}^1$, in the following sense:

\begin{definition}\label{def:fibre_over_R}
	Let $Y$ be a tropical variety which is regular at infinity, $W\subset Y\times\mathbb{TP}^1$ a $(k+1)$-subcycle, and for $t\in\R\subset\mathbb{TP}^1$, let $\varphi_t$ be the pullback of the function $\max(x,t)$ along the projection $Y\times\R\rightarrow\R$ (here $x$ is the coordinate on $\R$, while $t$ is a parameter). The \textit{fibre of $W$ at $t$} is the cycle in $Y$ given by $$W_t:=(\pi_Y)_*\mathrm{div}(\varphi_t\vert_{W}),$$
	where $\pi_Y:Y\times\R\rightarrow Y$ is projection onto the first factor. 
	For $t\in\{-\infty,+\infty\}$, \cite[Definition 5.3.4]{tropical_book_mikhalkin_rau} constructs a $k$-cycle by intersecting $W$ with the boundary divisor components $Y\times\{\pm\infty\}$. We denote this by $W_t$ and call this the \textit{fibre of $W$ at $t$}.
\end{definition}

\begin{definition}\cite[Definition 6.3.1]{tropical_book_mikhalkin_rau}
	Let $Y$ be a tropical variety which is regular at infinity, and $Z$, $Z'$ two $k$-cycles in $Y$. We say they are \textit{rationally equivalent} if there exists a $(k+1)$-cycle $W\subset Y\times\mathbb{TP}^1$ such that 
	\begin{enumerate}[label=(\roman*)]
		\item $W$ is the closure of a cycle in $Y\times\R$;
		\item $W_{-\infty}=Z$ and $W_{+\infty}=Z'$.
	\end{enumerate}
	We call the \textit{$k$-th tropical Chow group of $Y$}, and denote by $CH_k(Y)$, the group of tropical $k$-cycles in $Y$ modulo the equivalence relation generated by rational equivalence.
	\label{def:rational_equivalence_MR}
\end{definition}

Importantly, this notion generalises a theory of divisors and rational equivalence which had been well-established in the case of tropical curves \cite{baker_norine_RR_on_finite_graphs,cools_draisma_payne_robeva_brill_noether,gathmann_kerber_riemann_roch,haase_musiker_yu_linear_systems,mikhalkin_Zharkov_trop_curves_jac_theta_funct}. 
Notice that if the cycles $Z$ and $Z'$ are contained in the complement of the boundary divisor $Y^{[0]}\subset Y$, then the condition that $W$ is the closure of a cycle in $Y\times\R$ implies that there must exist $a<b\in\R$ such that, for any $t<a$ and $t'>b$, $W_{t}=Z$ and $W_{t'}=b$. In fact, under this condition, $Z$ and $Z'$ are abstract tropical cycles in $Y^{[0]}$ in the sense of \cite{tropical_intersection_theory_allerman_rau}, and this notion of rational equivalence coincides with the one previously introduced in \cite{tropical_rational_equivalence_allerman_hampe_rau}:

\begin{definition}[{\cite[Definition 3.4]{tropical_intersection_theory_allerman_rau}}]
	Two $k$-cycles $Z$ and $Z'$ in $Y^{[0]}$ are \textit{rationally equivalent over $\R$} if and only if there exists a $(k+1)$-cycle $W\subset X\times\R$ and points $a<b \in \R$ with $W_t-W_{t'}=Z-Z'$ for all $t\leq a$ and $t'\geq b$. 
	\label{def:rational_equivalence_over_R}
\end{definition}

An equivalent definition can be given in terms of divisors of rational functions, which we will use in \ref{section:results}. 

\begin{definition}[{\cite[Definition 6.1]{tropical_intersection_theory_allerman_rau}}]
	A rational function $\phi$ on a tropical cycle $X$ is a continuous function $\phi:X\longrightarrow\R$ that is integer affine on each cell of a suitable polyhedral structure of $X$. 
\end{definition}

To any such $f$, one can associate a divisor $\mathrm{div}(f)$, which is a weighted subcomplex of $X$ of codimension one. The general idea is that the support of the divisor records the locus on which $f$ is \textit{not} affine, with weights quantifying the change in slope. A rigorous definition requires introducing the \textit{balanced graph} of $f$. Because this is a construction we will slightly modify for our purposes, we defer the details to Section \ref{section:balanced_graph}. Divisors of rational functions are the starting point to formulate an equivalent definition to Definition \ref{def:rational_equivalence_over_R}.

\begin{definition}[{\cite[Definition 1.7.1]{allerman_thesis}}]
	A tropical cycle $Z\subset Y^{[0]}$ is said to be \textit{bounded rationally equivalent to zero} if there exists a morphism $f:X\rightarrow Y$ and a bounded rational function $\phi:X\rightarrow\R$ such that $f_*\mathrm{div}(\phi)=Z$ (i.e. $\dim(X)=\dim(Z)+1)$). 
	We say that $Z$ and $Z'$ are \textit{bounded rationally equivalent} if $Z-Z'$ is bounded rationally equivalent to zero. 
	\label{def:rational_equivalence}
\end{definition}

The Proposition below is proved in \cite{tropical_rational_equivalence_allerman_hampe_rau} for cycles in $\R^n$. However, their proof readily extends to abstract tropical cycles in the sense of \cite{tropical_intersection_theory_allerman_rau}. We sketch it in Section \ref{section:balanced_graph}, after introducing the relevant notion of \textit{two-ended balanced graph} of a rational function. 

\begin{proposition}[{\cite[Proposition 3.5]{tropical_rational_equivalence_allerman_hampe_rau}}]\label{prop:equivalence_rational_equivalence_definitions}
	Two $k$-cycles $Z$ and $Z'$ in $Y^{[0]}$ are bounded rationally equivalent if and only if they are rationally equivalent over $\R$.  
\end{proposition}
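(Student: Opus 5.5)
We prove the two implications separately. The two-ended balanced graph of a rational function (Section \ref{section:balanced_graph}) serves as the bridge in one direction, and a fibrewise divisor computation serves in the other. Throughout, $Z,Z'\subset Y^{[0]}$, and we work with abstract cycles in $Y^{[0]}$ in the sense of \cite{tropical_intersection_theory_allerman_rau}. This lets us copy the argument of \cite{tropical_rational_equivalence_allerman_hampe_rau} for $\R^n$ essentially verbatim, checking only that each step is local and uses nothing specific to $\R^n$.

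\emph{Bounded $\Rightarrow$ over $\R$.} Suppose $f:X\to Y$ and a bounded rational function $\phi$ on $X$ satisfy $f_*\mathrm{div}(\phi)=Z-Z'$.
\begin{enumerate}[label=(\arabic*)]
\item Form the graph of $\phi$ in $X\times\R$ and complete it to a balanced $(k+1)$-cycle $\Gamma_\phi$. Along each codimension-one cell where $\phi$ fails to be affine, add vertical half-lines pointing in the $+\infty$ or $-\infty$ direction of $\R$. Weights are chosen so that balancing holds; the positive part of the slope change goes up and the negative part goes down. This is the two-ended balanced graph.
\item Set $W:=(f\times\mathrm{id})_*\Gamma_\phi\subset Y\times\R$.
\item Compute $W_t$ for $t$ below $\min\phi$ and above $\max\phi$. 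Both thresholds are finite because $\phi$ is bounded. Intersecting with $\mathrm{div}(\max(x,t))$ then only picks up the vertical legs, since the graph part lies strictly above or below the level $t$.
\item Conclude that $W_t-W_{t'}$ equals $f_*$ of the divisor of $\phi$, i.e.\ $Z-Z'$, using that push-forward commutes with these intersection products (projection formula).
\end{enumerate}

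\emph{Over $\R$ $\Rightarrow$ bounded.} Suppose $W\subset Y\times\R$ and $a<b$ are given as in Definition \ref{def:rational_equivalence_over_R}.
\begin{enumerate}[label=(\arabic*)]
\item Take $X:=W$, with $f:=\pi_Y|_W$.
\item Take the bounded function $\phi:=\max(\min(x,b),a)$, pulled back from the $\R$-coordinate and restricted to $W$. On $W$ this equals $\varphi_a-\min$-type truncations, namely $\phi=\max(x,a)-\max(x,b)+b$.
\item Compute $\mathrm{div}(\phi)=\mathrm{div}(\varphi_a|_W)-\mathrm{div}(\varphi_b|_W)$ by linearity of $\mathrm{div}$. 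Pushing forward gives $W_a-W_b=Z-Z'$.
\end{enumerate}

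The main obstacle is step (1) of the first direction: checking that the completed graph $\Gamma_\phi$ is a balanced cycle on an abstract tropical cycle $X$ rather than in $\R^n$. This must be done chart by chart, using that $X$ is locally a balanced fan and that $\mathrm{div}(\phi)$ is defined locally by the same slope-sum formula. The remaining steps are formal consequences of linearity of $\mathrm{div}$ and the projection formula for push-forwards, both available in \cite{tropical_intersection_theory_allerman_rau}.
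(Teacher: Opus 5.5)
Your argument follows the paper's sketch step for step: the two-ended graph pushed forward along $f\times\mathrm{id}$ in one direction, and $X:=W$, $f:=\pi_Y|_W$ with a truncated coordinate function in the other. Your second direction is correct as written; your $\phi$ differs from the paper's $\max\{a,x\}-\max\{b,x\}$ only by the constant $b$. The gap is in step (1) of the first direction, exactly where you locate the ``main obstacle'': the chart-by-chart check does not go through once $\dim X=k+1\geq 2$.

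Suppose that over each codimension-one cell $\tau$ of $\mathrm{div}(\phi)$ you attach a vertical half-cell going one way when $\omega_\phi(\tau)>0$ and the other way when $\omega_\phi(\tau)<0$. Then, over a codimension-two cell $\sigma$ of $X$, the half-cells going down meet along a common codimension-one face. Balancing there requires the positive part of $\mathrm{div}(\phi)$ to be balanced at $\sigma$ \emph{on its own}, and likewise the negative part, but only the full divisor is balanced. Concretely, take $X=Y=\R^2$ and the bounded function $\phi=\min\{1,\max\{0,\min\{x,y\}\}\}$. Near the origin $\mathrm{div}(\phi)=\R_{\geq 0}(1,0)+\R_{\geq 0}(0,1)-\R_{\geq 0}(1,1)$. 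So the vertical half-line below the origin is a codimension-one face whose only adjacent facets are the two half-planes hanging from the positive rays. Their primitive normal vectors sum to $(1,1,0)\notin\R\cdot(0,0,1)$, so the two-ended graph is not a cycle. For $k=0$ no such faces exist, which is why the construction works for curves, the only case in which Section \ref{section:balanced_graph} defines it. The paper's sketch applies it to $(k+1)$-dimensional $X$ and has the same issue.

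The repair is to give up effectivity, which Definition \ref{def:rational_equivalence_over_R} never asks for. Use the original Allermann--Rau balanced graph, hanging a \emph{downward} cell over every $\tau$ with the signed weight $\omega_\phi(\tau)$. Balancing along the graph then holds by the choice of $\omega_\phi(\tau)$. Balancing along the new faces over a codimension-two cell $\sigma$ is exactly balancing of $\mathrm{div}(\phi)$ at $\sigma$: the vertical direction lies in the span of such a face, so the $\R$-components of the normal vectors drop out. Your steps (2)--(4) then go through unchanged, now with $W_t=f_*\mathrm{div}(\phi)=Z-Z'$ for $t<\min\phi$ and $W_{t'}=0$ for $t'>\max\phi$.

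Separately, check your orientation in step (1). Where $\mathrm{div}(\phi)$ has positive coefficient, the graph's defect points \emph{up} in the $\R$-direction (Figure \ref{fig:not_balanced}), so the compensating leg must point down. As you wrote it, the legs would double the defect rather than cancel it.
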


\begin{remark}
	It is clear from Definition \ref{def:rational_equivalence} that the pushforward $g_*Z\subset Y'^{[0]}$ of a rationally trivial cycle $Z\subset Y^{[0]}$ by a morphism $g:Y\rightarrow Y'$ is rationally trivial by considering the morphism $g\circ f:X\rightarrow Y'$. 
	\label{rmk:pushforward_on_Chow}
\end{remark}

\subsection{The two-ended balanced graph construction}\label{section:balanced_graph}

The notion of \textit{balanced graph} of a rational function $\varphi: C \rightarrow \mathbb{R}$ on a tropical cycle was first introduced by Allermann--Rau in \cite[Section~3]{tropical_intersection_theory_allerman_rau}, to construct the Weil divisor associated to a Cartier divisor. Their construction, as well as our modification of it, are special instances of the more general notion of \textit{tropical modifications} (see e.g. \cite[Section 4.5]{tropical_book_mikhalkin_rau}); these are constructed for arbitrary tropical cycles, while we consider only tropical curves. The starting observation is that the set-theoretic graph of $\varphi$ inside $C \times \mathbb{R}$ is not tropical: the balancing condition fails on the codimension one locus on which $\varphi$ is not affine, as Figure~\ref{fig:not_balanced} illustrates.

\begin{figure}[h!]
	\centering
	\begin{tikzpicture}[scale=0.9, >=stealth]
		\draw[->, gray!70] (-3,0) -- (3,0) node[right] {$C$};
		\draw[->, gray!70] (0,-1.5) -- (0,3) node[above] {$\mathbb{R}$};
		\node[above, gray!60] at (-1.6, 0.05) {\small slope $0$};
		\node[above left, gray!60] at (1.9, 1.9) {\small slope $s$};
		\draw[thick, blue] (-2.5, 0) -- (0, 0) -- (2.5, 2.5);
		\filldraw[blue] (0,0) circle (2.5pt);
		\node[below right] at (0.05,-0.05) {\small $(p,\varphi(p))$};
		\draw[->, thick, red!80!black] (0,0) -- (-1.4, 0)
		node[below] {$\scriptstyle (-1,\,0)$};
		\draw[->, thick, red!80!black] (0,0) -- (1.4, 1.4)
		node[right] {$\scriptstyle (1,\,s)$};
		\draw[->, dashed, thick, purple!80!black] (0,0) -- (0, 1.4)
		node[above left] {\small $\displaystyle\sum_i w_i\,\mathbf{u}_i = (0,s) \neq 0$};
	\end{tikzpicture}
	\caption{The set-theoretic graph $\operatorname{graph}(\varphi)\subset C\times\mathbb{R}$.
		At the vertex $(p,\varphi(p))$, the weighted sum of primitive outgoing lattice vectors
		$(-1,0)$ and $(1,s)$ (with $s\neq 0$ the slope of $\varphi$ to the right of $p$) equals
		$(0,s)\neq 0$, so the balancing condition fails.}
	\label{fig:not_balanced}
\end{figure}

To remedy this, they add semi-infinite edges with appropriate weights. More precisely, at each vertex $v$ of $C$ where $\varphi$ fails to be locally affine, the defect in the balancing condition at the lifted vertex $(v, \varphi(v)) \in C \times \mathbb{R}$ is measured by
\[
\delta(v) \coloneqq \sum_{e \ni v} w(e) \cdot s_{e,v},
\]
where the sum ranges over all edges $e$ of $C$ adjacent to $v$, $w(e)$ denotes the weight of $e$, and $s_{e,v}$ is the slope of $\varphi$ along $e$ in the direction pointing away from $v$. Note that $\delta(v)$ equals the coefficient of $[v]$ in the divisor $\mathrm{div}(\varphi)$. One then adjoins to $(v, \varphi(v))$ a semi-infinite edge in the direction $(0, \pm1)$ carrying weight $\vert\delta(v)\vert$, where the sign $\pm$ is $-1$ if $\delta(v)>0$ and $-1$ if $\delta(v)<0$. The \textit{two-ended balanced graph of $\varphi$}, which we denote $\graphvarphi$, is the union of $\operatorname{graph}(\varphi)$ -- in which edges inherit weights from the corresponding edges of $C$, with these weighted semi-infinite edges. Note that $\graphvarphi$ is effective, i.e. all weights are positive. This is the reason for our modification of Allerman--Rau's original construction; their balanced graph is obtained by adjoining only \textit{negative} semi-infinite edges (primitive vectors $(0,-1)$) with weights $\delta(v)$, i.e. possibly negative. These two constructions are represented in Figure~\ref{fig:two_ended} below, in the simple case where $C \sim S^1$.

\begin{figure}[h!!]
	\centering
	\begin{minipage}[b]{0.48\textwidth}
		\centering
		\begin{tikzpicture}[scale=0.65, >=stealth]
			\draw[->, gray!70] (-2.8,0) -- (2.8,0) node[right] {\small $C$};
			\draw[->, gray!70] (0,-2.2) -- (0,3.4) node[above] {\small $\mathbb{R}$};
			\draw[thick, blue] (-2,0) -- (0,2) -- (2,0);
			\filldraw[blue] (-2,0) circle (2.5pt) node[below] {\small $v_1$};
			\filldraw[blue] ( 0,2) circle (2.5pt) node[above right] {\small $v_2$};
			\filldraw[blue] ( 2,0) circle (2.5pt) node[below] {\small $v_1$};
			\draw[thick,blue, ->] (-2, 0) -- (-2, -1.8);
			\node[left] at (-2, -1.0) {\small $+2$};
			\draw[thick, blue, ->] ( 2, 0) -- ( 2, -1.8);
			\node[right] at ( 2, -1.0) {\small $+2$};
			\draw[thick, red, ->] (0, 2) -- (0, -1.8);
			\node[right, red] at (0.08, 0.5) {\small $-2$};
		\end{tikzpicture}
		\par\smallskip
		\small Balanced graph of $\varphi$
	\end{minipage}
	\hfill
	\begin{minipage}[b]{0.48\textwidth}
		\centering
		\begin{tikzpicture}[scale=0.65, >=stealth]
			\draw[->, gray!70] (-2.8,0) -- (2.8,0) node[right] {\small $C$};
			\draw[->, gray!70] (0,-2.2) -- (0,3.4) node[above] {\small $\mathbb{R}$};
			\draw[thick, blue] (-2,0) -- (0,2) -- (2,0);
			\filldraw[blue] (-2,0) circle (2.5pt) node[below] {\small $v_1$};
			\filldraw[blue] ( 0,2) circle (2.5pt) node[above right] {\small $v_2$};
			\filldraw[blue] ( 2,0) circle (2.5pt) node[below] {\small $v_1$};
			\draw[thick, blue, ->] (-2, 0) -- (-2, -1.8);
			\node[left] at (-2, -1.0) {\small $+2$};
			\draw[thick, blue, ->] ( 2, 0) -- ( 2, -1.8);
			\node[right] at ( 2, -1.0) {\small $+2$};
			\draw[thick, blue, ->] (0, 2) -- (0, 3.3);
			\node[right] at (0.08, 2.75) {\small $+2$};
		\end{tikzpicture}
		\par\smallskip
		\small $\graphvarphi$
	\end{minipage}
	\caption{A rational function $\varphi$ on $C \sim S^1$ with two vertices $v_1, v_2$ and
		$\operatorname{div}(\varphi) = 2[v_1] - 2[v_2]$. 
		In the balanced graph (left), the semi-infinite edge at $v_2$
		carries negative weight $-2$ (shown in red). In the two-ended balanced graph $\graphvarphi$
		(right), this is replaced by an upward ray with positive weight $+2$.}
	\label{fig:two_ended}
\end{figure}

\begin{proof}[{Proof of Proposition \ref{prop:equivalence_rational_equivalence_definitions}}]
	This is a sketch of an immediate adaptation of the proof of Proposition 3.5 in \cite{tropical_rational_equivalence_allerman_hampe_rau}. Assume $Z$ and $Z'$ are bounded rationally equivalent, namely $Z-Z'=f_*(\mathrm{div}(\phi))$, where $f:X\rightarrow Y$ is a tropical map from $X$ with $\dim(X)=k+1$ and $\phi$ a bounded rational function on $X$. The \textit{two-ended balanced graph} of $\varphi$, $\graphphi$, introduced in Section \ref{section:balanced_graph} is a $(k+1)$-dimensional tropical cycle in $X\times\R$. Let $W:=(f\times\text{id})_*\graphphi\subset X\times\R$. 
	Because $\phi$ is bounded, one can take $a<\min(\phi)$ and $b>\max(\phi)$, and by construction of $\graphphi$, $W_t-W_{t'}=Z-Z'$ for any $t<a$ and $t'>b$. 
	On the other hand, assume the existence of a $(k+1)$-cycle $W\subset X\times\R$ satisfying the conditions above. Consider the bounded rational function $\varphi_{[a,b]}:=\max\{a,x\}-\max\{b,x\}$ on $\R$, and its pullback $\pi_\R^*\varphi:W\longrightarrow\R$ to $W$ via the projection map $\pi_\R:Y\times\R\longrightarrow\R$. Now consider the first projection $\pi_Y:Y\times\R\longrightarrow Y$. Restricting it to $W$ yields a morphism $\pi_Y:W\longrightarrow Y$. Unpacking the definitions yields $$(\pi_Y)_*\mathrm{div}(\pi_\R^*\varphi_{[a,b]})=Z-Z'.$$ 
\end{proof}

\subsection{Algebraic equivalence}\label{section:algebraic_equivalence}

We use the definition of algebraic equivalence introduced in \cite[Section 6.4]{tropical_book_mikhalkin_rau}, which also appears in \cite{zharkov_tropical_ceresa}. 

Classically, algebraic equivalence is defined by identifying fibres of flat families of cycles over smooth projective curves -- although equivalent definitions replace the base curve by an abelian variety, or any smooth (projective) variety. 
In the tropical setting, we want to consider families of tropical cycles over smooth compact tropical curves -- i.e. smooth compact tropical varieties of dimension one. For this, we must extend Definition \ref{def:fibre_over_R} to a notion of \enquote{fibre over a point $p\in C$}, where $C$ is such a curve.

Let $Y$ be a smooth tropical variety, $C$ a smooth tropical curve, and $Z\subset Y\times C$ a $(k+1)$-cycle. Assume $p\in C^{[0]}$ has sedentarity zero. Then a neighbourhood $U$ of $p$ looks like the line $L\subset \R^n$ with vertex sitting at zero, and denoting by $\varphi_p$ the pull back of $\max(x_1,\dots,x_n,0)$ to $Y\times U$ we take the $k$-cycle $Z_p=\mathrm{div}(\varphi_p)$. If $(\pi_Y)_*Z$ is furthermore contained in $Y^{[0]}$, this coincides with Allerman's notion of intersection of cycles $Z_p:=C\cdot (V\times\{p\})$ \cite{allermann_intersection_products_smooth_varieties}. If $p$ has sedentarity $1$, then $C$ looks like $\mathbb{T}$ locally around $p$, with $p$ mapped to $\{-\infty\}$. Assuming $Z$ is the closure of a cycle over $Y\times(C\backslash\{p\})$, \cite[Definition 5.3.4]{tropical_book_mikhalkin_rau} constructs an intersection with the \enquote{local divisor at infinity} $Y\times\{p\}$. This gives a $k$-cycle which we denote $Z_p$.

\begin{definition}[Definition 6.4.1 in \cite{tropical_book_mikhalkin_rau}]
	An \textit{algebraic equivalence} between two cycles $Z_1$ and $Z_2$ in a tropical variety $Y$ is given by a smooth compact tropical curve $C$ with two points $p,q\in C$ and a $(k+1)$-cycle $W\subset Y\times C$ such that
	\begin{enumerate}[label=(\roman*)]
		\item $W$ is the closure of a cycle in $V\times C\backslash\{p,q\}$;
		\item $W_p=Z_1$ and $W_q=Z_2$.
	\end{enumerate}
	We call \textit{algebraic equivalence} the equivalence relation generated by algebraic equivalences, and write $Z_1\stackrel{alg.}{\sim}Z_2$ if $Z_1$ and $Z_2$ are algebraically equivalent.
\end{definition}

\section{Results}\label{section:results}

In this Section we prove our main result:

\begin{theorem}\label{thm:tropical_voevodsky}
	Let $Y$ be a smooth tropical variety which is regular at infinity, and $Z\subset Y^{[0]}$ a cycle which is algebraically trivial in $Y^{[0]}$. There exists a positive integer $k_0$ such that, for every integer $k\geq k_0$, $$k!\cdot Z\times\dots\times Z=:k!\cdot Z^k\subset Y^k:=Y\times\dots\times Y$$ is rationally trivial. 
\end{theorem}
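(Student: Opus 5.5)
We plan to deduce Theorem \ref{thm:tropical_voevodsky} from Proposition \ref{main_prop} by a correspondence argument, following Voisin's strategy in the form of the remark after her theorem. Since algebraic equivalence is the equivalence relation generated by elementary algebraic equivalences, $Z$ can be written as a finite sum $Z=\sum_{i=1}^m (W^{(i)}_{p_i}-W^{(i)}_{q_i})$. Here each $C_i$ is a smooth compact tropical curve, $p_i,q_i\in C_i$, and $W^{(i)}\subset Y\times C_i$ is a $(\dim Z+1)$-cycle. We first treat a single family, $m=1$, and write $Z=W_p-W_q$ with $W\subset Y\times C$.

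\textbf{Step 1 (fibres as correspondences).} We rewrite the fibre as $W_p=(\pi_Y)_*\big(W\cdot (Y\times\{p\})\big)$ using Allermann's intersection product on $Y^{[0]}\times C^{[0]}$. This requires some care when $p$ or $q$ has positive sedentarity. In that case we first move $p,q$ within $C^{[0]}$ if necessary, or we work near the boundary using \cite[Definition 5.3.4]{tropical_book_mikhalkin_rau}. Hence $Z=(\pi_Y)_*\big(W\cdot \pi_C^*(p-q)\big)$.

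\textbf{Step 2 (products).} Consider $W^{k}\subset (Y\times C)^k\cong Y^k\times C^k$. Using multiplicativity of the intersection product with respect to products of cycles, we get
\[
Z^k=(\pi_{Y^k})_*\big(W^k\cdot \pi_{C^k}^*(p-q)^k\big).
\]
Proposition \ref{main_prop} gives $k_0$ such that $k!\,(p-q)^k$ is rationally trivial in $C^k$ for $k\ge k_0$. Rational equivalence is preserved by pullback along the projection, since a product with $Y^k$ of a bounded rational equivalence is again one. It is also preserved by intersecting with the fixed cycle $W^k$, via the divisor description in Definition \ref{def:rational_equivalence} and the compatibility $\mathrm{div}(\phi)\cdot W=\mathrm{div}(\phi|_W)$. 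Finally, it is preserved by pushforward, by Remark \ref{rmk:pushforward_on_Chow}. Together these give $k!\,Z^k\sim0$.

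\textbf{Step 3 (sums).} For general $m$, expand $Z^k=(Z_1+\dots+Z_m)^k$ into products of the form $Z_1^{a_1}\times\cdots\times Z_m^{a_m}$, with factors in various orders, where $Z_i$ is the $i$-th summand. We choose $k\ge m(k_0-1)+1$. Then in every term some $a_i\ge k_0$. After a coordinate permutation of $Y^k$, which is an automorphism and so preserves rational triviality, that term equals $Z_i^{a_i}\times(\text{rest})$. Since $a_i!$ divides $k!$, the term $k!\,Z_i^{a_i}$ is rationally trivial, and its product with any cycle remains rationally trivial. This is the product of a bounded rational equivalence with a cycle. So the whole sum vanishes.

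\textbf{Main obstacle.} The real difficulty is Proposition \ref{main_prop} itself, which we take as given here. Within this deduction, the delicate step is the intersection-theoretic compatibility in Step 2. This covers three facts: $(W\cdot\pi_C^*x)^k=W^k\cdot \pi_{C^k}^*x^k$, the fact that rational equivalence on $C^k$ pulls back and intersects well with $W^k$, and the identification of sedentarity-positive fibres with intersection products. This is exactly where we need $Z\subset Y^{[0]}$ and Allermann's theorem \cite[Theorem 2.9]{allermann_intersection_products_smooth_varieties}. We would first try to check these properties directly on the level of rational functions, namely $\mathrm{div}(\pi^*\phi\,|_{W^k})$, rather than through general intersection theory.
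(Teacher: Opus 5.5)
This is essentially the paper's argument. The paper (whose $\Gamma$ is your $W$) also lets $\Gamma^k$ act as a correspondence on a family $\tilde W\subset C^k\times\R$ realising $k!\,(p-q)^k\sim0$, and implements your ``pull back, intersect with $W^k$, push forward'' as $(\pi_{Y^k\times\R})_*\bigl((\Gamma^k\times\Delta_\R)\cdot(Y^k\times\R\times\tilde W)\bigr)$, identifying its fibres via the projection formula. This is also how your Step~2 should be made precise: $\mathrm{div}(\phi)\cdot W^k=\mathrm{div}(\phi|_{W^k})$ only makes sense when $\phi$ is an ambient function pulled back from the $\R$-factor (as in the proof of Proposition~\ref{prop:equivalence_rational_equivalence_definitions}), not a function on the abstract $X$ of Definition~\ref{def:rational_equivalence}.

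Your Step~3 is a genuine improvement over the paper. Its proof only treats a single family with $\Gamma_p=Z$ and $\Gamma_q=0$, whereas your pigeonhole bound $k\geq m(k_0-1)+1$ together with $a_i!\mid k!$ correctly handles cycles that are algebraically trivial only through a chain of elementary equivalences.
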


\subsection{Outline of proof}\label{section:outline_of_proof}

Theorem \ref{main_theorem} follows from the case where $Y=C$ is a smooth compact tropical curve, and $Z\subset C$ is the nullhomologous $0$-cycle $p-q$ for two points $p,q\in C$:

\begin{proposition}\label{prop:tropical_voevodsky_curves}
	Let $C$ be a smooth compact tropical curve of genus $g$. For any two points $p,q\in C^{[0]}$, $k!\cdot(p-q)^{k}$ is rationally trivial in $(C^{[0]})^k$ for $k$ sufficiently large. 
\end{proposition}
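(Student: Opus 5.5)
The plan is to symmetrize, work on the symmetric product, and then pull back. Write $\sigma:(C^{[0]})^k\to \mathrm{Sym}^k$ for the quotient. Expanding with the product convention of the Notation section gives
$$(p-q)^k=\sum_{S\subset\{1,\dots,k\}}(-1)^{k-|S|}\,x_S,$$
where $x_S$ is the point with $p$ in the coordinates indexed by $S$ and $q$ elsewhere. The factor $k!$ should come from pulling back along $\sigma$. Its fibre over $jp+(k-j)q$ consists of the $\binom kj$ points $x_S$ with $|S|=j$, counted with multiplicity $j!(k-j)!$. Hence $k!\cdot(p-q)^k$ is the pullback of the $k$-th finite difference
$$\Delta_k:=\sum_{j=0}^k(-1)^{k-j}\tbinom kj\,[jp+(k-j)q].$$
Rather than build a tropical structure on $\mathrm{Sym}^k C$, which the introduction warns is complicated, I will work $S_k$-equivariantly on $C^k$ directly. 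I will produce a rational equivalence for the symmetrized cycle by symmetrizing every rational function I write down. By Remark~\ref{rmk:pushforward_on_Chow} and Proposition~\ref{prop:equivalence_rational_equivalence_definitions}, it suffices to exhibit tropical maps $f_i:X_i\to (C^{[0]})^k$ and bounded rational functions $\phi_i$ with $\sum_i (f_i)_*\mathrm{div}(\phi_i)=k!\cdot(p-q)^k$.

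Next I would build elementary rational equivalences in $C^k$ from a path. Fix an embedded path $\gamma$ from $q$ to $p$ in $C$. This is possible because $C$ is connected, and by moving $p$ and $q$ slightly we may assume $\gamma$ avoids the vertices. The basic move replaces one coordinate $q$ by $p$ while the other coordinates are fixed at points of $\gamma$. The degree of freedom that exists tropically is the following. For a degree-$g+1$ (or larger) configuration $D=\sum x_i$ on $C$, tropical Riemann--Roch (Baker--Norine, Gathmann--Kerber) gives $r(D)\ge 1$. So there is a rational function $\psi$ on $C$ with $\mathrm{div}(\psi)=D-D'$ where $D'$ contains any prescribed point. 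I would use such $\psi$ coordinatewise: on a product $C\times\gamma^{j}$, I would take a function of the form $\max$ or $\min$ of the $\psi$'s pulled back along the factors. Its divisor then trades one configuration $x_S$ for a combination of configurations supported on $\gamma$, with one coordinate free. Iterating, I aim to show the following: modulo rational equivalence, the symmetrized classes $c_j:=\sigma^*[jp+(k-j)q]$ satisfy a linear recursion in $j$ of order at most $g+1$. This is the tropical shadow of the fact that $j\mapsto[jp+(k-j)q]$ factors through the $g$-dimensional Jacobian and is polynomial of degree $\le g$ there. Once $c_j$ is shown to agree with a polynomial in $j$ of degree $\le g$, the $k$-th finite difference vanishes for $k\ge g+1$. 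This gives $k!\cdot(p-q)^k\sim0$ for all $k\ge k_0=g+1$ (possibly a larger explicit $k_0$).

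The main obstacle is this polynomiality step: making the \enquote{degree $\le g$} statement rigorous with explicit tropical functions, without appealing to a Chow-theoretic Jacobian. My first attempt will be induction on the genus via cutting cycles. Choose $g$ edges $e_1,\dots,e_g$ whose removal leaves a tree $T$. On $T$ (genus $0$), $p-q=\mathrm{div}(\psi_0)$ for an explicit piecewise-linear bounded $\psi_0$ of slope $1$ along $\gamma$. Then already $(p-q)\times(\text{anything})\sim 0$ via $\pi_1^*\psi_0$. Each time $\gamma$ crosses a cut edge $e_i$, the function $\psi_0$ fails to extend and produces a defect supported at a point of $e_i$. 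I expect that products of more than $g$ such defects must repeat some edge $e_i$. A repeated defect can then be cancelled by an antisymmetric function on $e_i\times e_i$ of the form $\max(x_a,x_b)$ on the two coordinates. This antisymmetrization is where the factor $k!$ (rather than $1$) naturally enters. Checking that these corrections are balanced and sum to exactly $k!\cdot(p-q)^k$, with no leftover boundary terms, is the step I expect to require the most care.
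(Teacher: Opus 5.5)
\textbf{There is a genuine gap.} Your bookkeeping identity is right: with $c_j=j!(k-j)!\sum_{|S|=j}x_S$ one has $k!\cdot(p-q)^k=\sum_{j}(-1)^{k-j}\binom{k}{j}c_j$. Everything after it, however, restates the goal rather than proving it.

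The claim that $j\mapsto c_j$ is, modulo rational equivalence, polynomial of degree at most $g$ is at least as strong as the proposition. It would give $k_0=g+1$, which the paper only conjectures in Remark \ref{remark:increasing_k}. Classically this polynomiality comes from the structure of $CH_0$ of the Jacobian (vanishing of high Pontryagin powers of degree-zero cycles) together with $CH_0(\mathrm{Sym}^k C)\cong CH_0(J(C))$. That is exactly the input the introduction says has no usable tropical counterpart, and you offer no substitute.

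The tools you sketch do not supply one:
\begin{itemize}
\item A rational equivalence between $0$-cycles needs a one-dimensional source: a curve $X\to C^k$ with a bounded function, or a $1$-cycle in $C^k\times\R$. A function on $C\times\gamma^j$ has a $j$-dimensional divisor, and $\gamma^j$, a product of paths with endpoints, is not even balanced.
\item The tree trick does not reduce the difficulty. The defects that $\psi_0$ produces on the cut edges are degree-zero cycles $\delta_i$ on non-bridge edges, which are in general not rationally trivial on $C$. After pigeonholing you still need some multiple of a symmetrized power $\delta_i^{\times m}$ to vanish. That is the proposition itself with $\delta_i$ in place of $p-q$; for $g=1$ the reduction returns literally the original problem.
\item $\max(x_a,x_b)$ cannot cancel these terms: $e_i\times e_i$ has boundary, and the divisor of $\max(x_a,x_b)$ is the diagonal, a curve rather than a $0$-cycle.
\item You may not move $p$ and $q$, and a path between points on different edges must pass through vertices.
\end{itemize}

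The missing idea is that no polynomial structure is needed. A single linear equivalence, applied to the whole configuration at once, suffices. You already have the input: $kp-q$ is equivalent to an effective $D$ once $k-1\ge g$.

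The paper's argument runs as follows.
\begin{itemize}
\item It takes $\varphi$ with $\mathrm{div}(\varphi)=kp-q-D$ and nonzero integer slopes away from the leaves (Proposition \ref{non-zero_slopes}). This is obtained by adding a function from Lemma \ref{voltage_functions}, at the cost of enlarging $k$.
\item From the level sets of the two-ended balanced graph of $\varphi$ it reads off a continuous family $A(t)=\{x_1(t),\dots,x_k(t)\}$ of $k$ points counted with multiplicity. This family equals $\{p,\dots,p\}$ for $t\ll0$ and contains $q$ for $t\gg0$.
\item It builds the $1$-cycle $W\subset C^k\times\R$ with fibres $W_t=\sum_{\sigma\in S_k}(x_{\sigma(1)}(t)-q,\dots,x_{\sigma(k)}(t)-q)$. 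This equals $k!\cdot(p-q)^k$ for $t\ll0$, and it is $0$ as soon as some $x_i(t)=q$, since that factor becomes $q-q$.
\end{itemize}
The factor $k!$ is the sum over labelings of the unordered family. It is forced because labels get permuted where branches of the balanced graph meet; it does not come from an antisymmetrization on $e_i\times e_i$.

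The real work, which your proposal does not touch, is then:
\begin{itemize}
\item assigning the weights $\mu(E)/c(E)$;
\item inserting the horizontal edges $L(y)$ over the leaves where $\varphi$ is constant;
\item checking balancing, including the alternating binomial cancellations at $t=\varphi(q)$.
\end{itemize}
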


We realise this rational equivalence $k!\cdot(p-q)^{k}$ in $(C^{[0]})^k$ by constructing a $1$-cycle $W\subset C^k\times\R$ as in Definition \ref{def:rational_equivalence_over_R}.

Assuming Proposition \ref{prop:tropical_voevodsky_curves}, one can conclude as follows. 

\begin{proof}[Proof of Theorem \ref{thm:tropical_voevodsky} assuming Proposition \ref{prop:tropical_voevodsky_curves}.]
	
	Let $Z$ be an algebraically trivial tropical $l$-cycle in $Y^{[0]}$. By definition, there exists a smooth compact tropical curve $C$ with two points $p$ and $q$, and an $(l+1)$-cycle $\Gamma\subset Y^{[0]}\times C$ such that $\Gamma_p=Z$ and $\Gamma_q=0$. We argue that $p$ and $q$ can be taken to have sedentarity zero. To see this, assume $\mathrm{sed}(p)=1$, i.e. $p$ is a $1$-valent vertex of $C$. Then $p$ can be identified with $\{-\infty\}$ in a neighbourhood of the form $[-\infty;a]$ for some $a\in\R$. Recall that $\Gamma\subset Y^{[0]}\times C$ is the closure of a cycle in $Y^{[0]}\times(C\backslash\{p\})$. This means that, unless $\Gamma$ is of the form $Z\times(-\infty;a)$ for suitable $a\in\R$ near $p$, $Z$ would have to intersect the boundary divisor $Y_\infty\subset Y$, contradicting our assumption that $Z\subset Y^{[0]}$. This means any $p\in(-\infty;a)$ satisfies $\mathrm{sed}(p)=0$ as well as $\Gamma_p=Z$. Following this observation, we will also simply denote $C$ to imply $C^{[0]}$ in the rest of the proof, in order to simplify notation. 
	
	Because all intersections products occuring below take place within $0$-sedentarity locus, they are defined following \cite{allermann_intersection_products_smooth_varieties}. Furthermore, a rational equivalence in $Y^{[0]}$ yields a rational equivalence in $Y$ by pushing-forward by the inclusion. For this reason, from now on we assume $Y=Y^{[0]}$ to simplify notation.

	By Proposition \ref{prop:tropical_voevodsky_curves}, for $k$ sufficiently large, there exists a rational equivalence $k!\cdot(p-q)^k\sim 0$ in $C^k$, which by Definition \ref{def:rational_equivalence_over_R} can be realised by a $1$-cycle $W\subset C^k\times\R$ satisfying $W_t-W_{t'}=k!\cdot(p-q)^k$ for $t\ll0\in\R$ and $t'\gg0\in\R$.
	
	 	We will now consider the product $\Gamma^k:=\Gamma\times\dots\times\Gamma$, for $k\in\Z_{\geq1}$. The idea is that using $\Gamma^k$ -- viewed as a cycle in $Y^k\times C^k$ after permuting the factors accordingly -- to \enquote{push-forward/compose} (in the sense of correspondences) the rational equivalence $W\subset C^k\times\R$ should yield a cycle $\Gamma^k(W)\subset Y^k\times\R$ witnessing a rational equivalence $$k!\cdot(\Gamma_p-\Gamma_q)^k=k!\cdot Z^k\sim 0. $$

	Let us make this precise, denoting by $\Delta_\R\subset\R\times\R$ the diagonal. Consider the tropical cycle $(\Gamma^k\times\Delta_\R)$ inside $Y^k\times C^k\times\R^2$, which we view as a cycle in $Y^k\times\R\times C^k\times\R$ after permuting the factors accordingly. 
	Then $$\Gamma^k(W):=(\pi_{Y^k\times\R})_*\left((\Gamma^k\times\Delta_\R)\cdot(Y^k\times\R\times W)\right)\subset Y^k\times\R$$ is a well-defined $(kl+1)$-cycle, as the intersection takes place within the $0$-sedentarity locus.

	We now show that, for any $a\in\R$, the fibres of $\Gamma^k(W)$ compute the composition in the expected sense, i.e.
	\begin{equation}\label{fibre_of_composition}
		\Gamma^k(W)_a=(\pi_{Y^k})_*\big(\Gamma^k\cdot(Y^k\times W_a)\big)=:\Gamma^k(W_a).
	\end{equation}
	By Definition \ref{def:fibre_over_R}, and unwinding the definition of $\Gamma^k(W)$ above,
	\begin{align}
		\Gamma^k(W)_a&=(\pi_{Y^k})_*\Big(\mathrm{div}(\max(x,a))\cdot\Gamma^k(W)\Big)\notag\\
		&=(\pi_{Y^k})_*\Big(\mathrm{div}(\max(x,a))\cdot(\pi_{Y^k\times\R})_*\big[(\Gamma^k\times\Delta_\R)\cdot(Y^k\times\R\times W)\big]\Big), \label{eq:unwind}
	\end{align}
	where $\max(x,a)$ is pulled back from $\R$ to $Y^k\times\R$. 
	
	Recall the projection formula for the intersection product on smooth tropical varieties \cite{tropical_intersection_theory_allerman_rau,allermann_intersection_products_smooth_varieties}: if $f:X\to X'$ is proper and $D$ is a Cartier divisor on $X'$, then for any cycle $\alpha$ on $X$,
	\begin{align}
		f_*\big(f^*D\cdot\alpha\big)=D\cdot f_*\alpha.\notag
	\end{align}
	After pulling back $\max(x,a)$ further to $Y^k\times\R\times C^k\times\R$ via $\pi_{Y^k\times\R}$, this allows us to move $\mathrm{div}(\max(x,a))$ inside the push-forward in Equation \eqref{eq:unwind}:
	\begin{align}
		\Gamma^k(W)_a=\big(\pi_{Y^k}\circ\pi_{Y^k\times\R}\big)_*\Big(\mathrm{div}(\max(x,a))\cdot(\Gamma^k\times\Delta_\R)\cdot(Y^k\times\R\times W)\Big). \notag
	\end{align}
	
	 By commutativity and associativity of the intersection product \cite{tropical_intersection_theory_allerman_rau,allermann_intersection_products_smooth_varieties},
	\begin{align}
		\mathrm{div}(\pi_{Y^k\times\R}^*\max(x,a))\cdot(\Gamma^k\times\Delta_\R)\cdot(Y^k\times\R\times W)&=(\Gamma^k\times\Delta_\R)\cdot\left(\mathrm{div}(\max(x,a)) \cdot (Y^k\times\R\times W)\right) .\notag
	\end{align}
	
	Notice that $\max(x,a)$ is viewed here as a function on $Y^k\times\R$ having been pulled back from $\R$. Write $q_1,q_2:\R\times\R\to\R$ for the two projections, and $\Delta_\R=\{(t,t):t\in\R\}$. Because $q_1=q_2$ on $\Delta_\R$ (both restrict to the identity under $\Delta_\R\cong\R$), the divisors $\mathrm{div}(q_1^*\max(x,a))$ and $\mathrm{div}(q_2^*\max(x,a))$ agree once intersected with $\Delta_\R$. This means that the presence of the diagonal in the previous equation allows us to equally view $\mathrm{div}(\max(x,a))$ as the divisor of $\max(x,a)$ pulled back via the \textit{second} $\R$-factor, appearing in $C^k\times\R$. This yields $$\mathrm{div}(\pi_{ C^k\times\R}^*\max(x,a))=Y^k\times\R\times\mathrm{div}(\max(x,a)),$$ hence
	\begin{align}
		\mathrm{div}(\pi_{C^k\times\R}^*\max(x,a)) \cdot (Y^k\times\R\times W)&=Y^k\times\R\times(\mathrm{div}(\max(x,a))\cdot W) \notag
		&=Y^k\times\R\times W_a\times\{a\}. \notag
	\end{align}

	Again using that, along both factors, the cycles $\R$ and $\{a\}$ coincide after intersection with $\mathrm{div}(\max(x,a))$ and $\Delta_\R$, we find
	\begin{align}
		\Gamma^k(W)_a=&(\pi_{Y^k})_*\left((\Gamma^k\times\{a,a\})\cdot(Y^k\times\{a\}\times W_a\times\{a\})\right) \notag\\
		=&(\pi_{Y^k})_*\left(\Gamma^k\cdot(Y^k\times W_a)\right)=: \Gamma^k(W_a). \notag
	\end{align}

	Applying this at $t\ll0$ and $t'\gg0$ as above, i.e. such that $W_t=k!\cdot(p-q)^k$, $W_{t'}=0$, we get
	\begin{equation}\label{difference_of_fibres}
		\Gamma^k(W)_t-\Gamma^k(W)_{t'}=\Gamma^k(W_t)-\Gamma^k(W_{t'})=\Gamma^k\big(k!\cdot(p-q)^k\big)-\Gamma^k(0)=\Gamma^k\big(k!\cdot(p-q)^k\big). \notag
	\end{equation}
	By Definition \ref{def:rational_equivalence_over_R}, this exhibits $\Gamma^k(W)\subset Y^k\times\R$ as a rational equivalence between $\Gamma^k\big((p-q)^k\big)$ and $0$ in $Y^k$.

	It remains to identify $\Gamma^k\big((p-q)^k\big)$. 
	For any point $(p_1,\dots,p_k)\in C^k$, the fibre $\Gamma^k_{(p_1,\dots,p_k)}$ of $\Gamma^k$ over $(p_1,\dots,p_k)$ is simply the product $\Gamma_{p_1}\times\dots\times\Gamma_{p_k}$ of the fibres of $\Gamma$ over the $p_i$'s. 
	
	Expanding $(p-q)^k$ multilinearly as a signed sum of terms of the form $$\sigma\big(\underbrace{p,\dots,p}_{i},\underbrace{q,\dots,q}_{k-i}\big),$$ where $0\leq i\leq k$ and $\sigma\in S_k$, linearity of $\Gamma^k(-)$ means $\Gamma^k\big((p-q)^k\big)$ can be expanded as a signed sum of terms of the form $$\Gamma^k_{\sigma(p,\dots,p,q,\dots,q)}=\Gamma_p^{\times i}\times\Gamma_q^{\times(k-i)},$$
	up to reordering the factors according to $\sigma$. 
	Because $\Gamma_q=0$, every summand with $i<k$ vanishes; only the term $i=k$, with coefficient $(-1)^0=1$ and a single permutation $\sigma=\mathrm{id}$, survives, giving $\Gamma^k\big((p-q)^k\big)=\Gamma_p^{\times k}=Z^k$.

	Combining this with the previous equation, $\Gamma^k(W)$ realises a rational equivalence between $Z^k$ and $0$ in $Y^k$, i.e.\ $Z^k\sim0$.
	
\end{proof}

It now remains to prove Proposition \ref{prop:tropical_voevodsky_curves}: the strategy for doing so is the following. Let $C$ be a smooth compact tropical curve of genus $g$, and $p,q\in C^{[0]}$. For $k>g$, the divisor $k\cdot p-q$ has degree $k-1\geq g$, therefore by \cite[Corollary 6.6]{mikhalkin_Zharkov_trop_curves_jac_theta_funct} its linear equivalence class contains an effective divisor $D$. More precisely, there exists a rational function such that $k\cdot p-q-D=\mathrm{div}(\varphi)$. Note the condition that the rational equivalence is in $C^{[0]}\subset C$ restricts the support of $D$ to $C^{[0]}\subset C$; while this does not follow immediately from \cite[Corollary 6.6]{mikhalkin_Zharkov_trop_curves_jac_theta_funct}, it follows from our construction below. 

From this rational equivalence, and more precisely from the level sets of $\varphi$, we construct a one-parameter family of $k$ points of $C$, $A(t)=\{x_1(t),\dots,x_k(t)\}$, which satisfies the following:
\begin{enumerate}[label=(\roman*)]
	\item there is a finite set $\{a=t_0<\dots<t_r=b\}$ of real numbers such that $A(t)$ is well-defined up to a choice of permutation for each connected component in $\R\backslash\{t_0,\dots,t_r\}$;
	\item the permutations associated with each of these connected components can be chosen so that every $x_i(t)\in C$ is continuous;
	\item for any $t<a$, $x_1(t)=\dots=x_k(t)=p$;
	\item for any $t>b$, $x_i(t)=q$ for some $i$.
\end{enumerate}
The family $A(t)$ is constructed in Section \ref{section:family_k_points}. Roughly, it is obtained by taking the $C$-coordinates of points in $$\graphvarphi_t:=\graphvarphi\cap (C\times\{t\})$$ with a suitable notion of multiplicity. 

From the $1$-parameter family $A(t)$, we will construct a tropical $1$-cycle $W\subset C^k\times\R$, which is defined precisely so that its fibres are given by
\begin{align}
	W_t:=\bigoplus_{\sigma\in S_k} (x_{\sigma(1)}(t)-q,\dots,x_{\sigma(k)}(t)-q)\subset C^k \notag
\end{align}
for any $t\in\R$ -- its precise construction is the subject Section \ref{section:constructing_W}. Given $a,b\in\R$ appearing in properties $(iii), (iv)$ of $A(t)$ above, $W_t=k!\cdot(p-q)^k$ for any $t>a$ and $x_i(t)-q=0$ hence $W_{t}=0$ for any $t>b$. In this way, $W$ gives a rational equivalence $k!\cdot(p-q)^k\sim0$ in $C$. From our construction, all points in $A(t)$ will lie in $C^{[0]}\subset C$, allowing for $W\subset (C^{[0]})^k\times\R$. Section \ref{section:constructing_W} is dedicated to constructing $W$ and understanding its structure as a tropical $1$-cycle.

\subsection{Constructing the family of points $A(t)_{t\in\R}$}\label{section:family_k_points}

The starting point for constructing $A(t)$ -- a $1$-parameter family of $k$ points of $C^{[0]}$ -- is the rational equivalence $k\cdot p-q\sim \mathrm{div}(\varphi)$. More precisely, we use the two-ended graph of $\varphi$, $\graphvarphi$, constructed in Section \ref{section:balanced_graph}. Points in $A(t)$ can be thought of as the $C$-coordinates of points in $\graphvarphi_t:=\graphvarphi\cap(C\times\{t\})$ with a certain \textit{multiplicity}; we make this precise here.

 Given any $t\in\R$, there are two options for the intersection of $C\times\{t\}$ with $\graphvarphi$:
 \begin{enumerate}
 	\item it is transverse, therefore contains a finite number of points;
 	\item $t=\varphi(e)$ for some segment $e$ of $C$ on which $\varphi$ is constant. 
 \end{enumerate}
 
For our construction to work, we will need to avoid the latter case, with the exception of the leaves of $C$. Indeed, recall that the leaves of $C$ have infinite length, so it cannot take finite values at the $1$-valent vertices of $C$ unless it is constant at the end of its leaves. We use the following Proposition:

\begin{proposition}\label{non-zero_slopes}
	Let $p$ and $q$ be points on a smooth compact tropical curve $C$. For sufficiently large positive integer $k$, the divisor $k\cdot p-q$ is rationally equivalent to an effective divisor $D$, and the rational equivalence $$k\cdot p-q-D=\mathrm{div}(\varphi)$$ can be realised by a rational function $\varphi$ whose slopes are non-zero, except for connected segments adjacent to the $1$-valent vertices of $C$ (where we force $\varphi$ to be constant).
\end{proposition}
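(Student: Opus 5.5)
We prove Proposition \ref{non-zero_slopes} by choosing the effective divisor $D$ carefully rather than taking an arbitrary representative. The leaves are handled first. Let $C_0\subset C$ be the compact core obtained by deleting the open leaves; it is a finite metric graph containing $p$ and $q$ (after moving $p,q$ slightly into $C^{[0]}$ if necessary, as in the proof of Theorem \ref{thm:tropical_voevodsky}). A rational function on $C_0$ extends by constants along each leaf, which adds no divisor there. It therefore suffices to produce, on $C_0$, a function $\varphi$ with $\mathrm{div}(\varphi)=k\cdot p-q-D$, $D\geq0$, whose slopes vanish on no segment of $C_0$. The constant pieces will then be exactly the leaf segments.

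Step one is to find a convenient representative. Since $\deg(k\cdot p-q)=k-1\geq g$ once $k>g$, \cite[Corollary 6.6]{mikhalkin_Zharkov_trop_curves_jac_theta_funct} gives an effective $D_0$ with $k\cdot p-q-D_0=\mathrm{div}(\varphi_0)$. In general $\varphi_0$ will have flat regions. To avoid them, I will not use $D_0$ directly. Instead, I set $k=k_1+m$ with $k_1>g$ fixed and $m$ large, and construct $\varphi=\varphi_0+\psi$. Here $\psi$ satisfies $\mathrm{div}(\psi)=m\cdot p-E$ with $E\geq0$ of degree $m$, and $\psi$ has slopes of absolute value large compared to those of $\varphi_0$ on every edge. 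For $\psi$ I take a \enquote{distance-from-$p$} type function: choose $\psi=-N\cdot d_\epsilon(p,\cdot)$, a perturbation of $-N$ times the distance to $p$. It decreases strictly away from $p$ along every edge. Its divisor has a positive part concentrated at $p$, of weight $N\cdot\mathrm{val}(p)$, and negative parts at the points where geodesics from $p$ meet (the cut locus) and at vertices. To make these negative parts land in $-D$ with $D$ effective, I need $\mathrm{div}(\psi)=(\text{positive at }p)-(\text{effective})$. This holds for $\psi=-N d(p,\cdot)$, because away from $p$ the function $d(p,\cdot)$ is concave along geodesics except at cut points, where the outgoing slopes of $-d$ sum to a nonnegative value. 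A sign check on this is needed. Slopes of $-N d$ are $\pm N\neq0$ on every edge. Taking $m=N\cdot\mathrm{val}(p)$ gives $k=k_1+m$ of the required form.

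Step two adds the two functions. The slopes of $\varphi=\varphi_0+\psi$ are the slopes of $\varphi_0$ (bounded by some $M$) plus $\pm N$. For $N>M$ they are therefore nonzero on every segment of $C_0$. The divisor is $\mathrm{div}(\varphi)=k_1p-q-D_0+m\,p-E$, which has the form $k\cdot p-q-D$ with $D=D_0+E\geq0$. Any positive contribution at vertices must be checked to be absorbed. This is the point where the precise divisor of $d(p,\cdot)$ matters.

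Step three passes from the values $k=k_1+N\mathrm{val}(p)$ to all sufficiently large $k$. For intermediate $k$ I vary $k_1$ in a range $k_1\in\{g+1,\dots,g+\mathrm{val}(p)\}$, which covers every residue class. For each such $k_1$ I take $M$ to be the maximum of the slope bounds and choose $N$ larger than it. The main obstacle I expect is Step one's sign claim: at cut points and at vertices, $-d(p,\cdot)$ could have positive divisor contributions, for example at a vertex reached by a single geodesic that then branches, where the outgoing slopes $-1$ in several directions versus the incoming $+1$ give a net negative sum. If the claim fails, I would fall back on a general fact. On a metric graph, every divisor $m\cdot p-(\text{effective})$ with large $m$ can be realized by a function strictly decreasing away from $p$, which I would construct edge by edge from a spanning tree rooted at $p$, assigning larger slopes to edges closer to the root.
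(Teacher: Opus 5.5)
Your architecture matches the paper's. Its proof also sets $\varphi=\varphi_0+f$ with $f$ steep and $\mathrm{div}(f)=m\,p-(\text{effective})$, so everything rests on producing such an $f$ with no flat segment; the paper does this in Lemma~\ref{voltage_functions}. \textbf{This is where your argument has a genuine gap: your candidate $\pm N d(p,\cdot)$ does not work, and the problem is not just a sign check.} Let $d=d(p,\cdot)$, and for $x\neq p$ let $a(x)\geq1$ and $b(x)$ be the number of directions at $x$ in which $d$ decreases, resp.\ increases. With the paper's convention (coefficients are sums of outgoing slopes), $\mathrm{div}(d)=\mathrm{val}(p)\,p+\sum_{x\neq p}(b(x)-a(x))\,x$. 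At a vertex where geodesics from $p$ branch, $b>a$, so that coefficient has the \emph{same} sign as the one at $p$, whatever the sign of $N$ or of the convention. Hence once such a vertex exists, $\mathrm{div}(\pm Nd)$ is never of the form $\pm(m\,p-E)$ with $E\geq0$. For instance, take a theta graph with vertices $p,v$ and edge lengths $\ell_1<\ell_2\leq\ell_3$. Then $\mathrm{div}(d)=3p+v-2c_2-2c_3$, where $c_i$ are the cut points on the two longer edges. So the coefficient of $v$ in $\mathrm{div}(\varphi_0+Nd)$ is at least $N-k_1$, which is positive for large $N$. This defect grows with $N$ and cannot be absorbed into $-q-D_0$, so $D=D_0+E$ is not effective and Step two fails as well. (With the paper's convention $p$ must be a minimum of $\psi$, so you want $+Nd$, and your fallback's $\psi$ should increase, not decrease, away from $p$.)

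The fallback does not fill the gap. As stated it is false: $m\,p-E$ is principal only when its Abel--Jacobi image vanishes. What you actually need, \emph{one} nowhere-flat $\psi$ with $\mathrm{div}(\psi)=m\,p-E$ and $E\geq0$, is exactly the missing lemma. A spanning tree with slopes decreasing outward only handles tree edges. On a non-tree edge $[u,w]$ of length $\ell$ both endpoint values are already fixed, so you need a concave tent whose slope at the lower end is at least $\vert\psi(w)-\psi(u)\vert/\ell$. That extra outflow must be paid for by a steeper parent edge, which raises heights further along the tree, and nothing in your sketch controls this feedback. One way to rescue your idea is to reparametrise the distance rather than scale it. Let $0=h_0<\dots<h_J$ be the distinct values of $d$ at vertices, fix an integer $R>\max_v\mathrm{val}(v)$, and set $\psi=G\circ d$. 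Here $G$ is concave, increasing and PL with $G(0)=0$, of slope $(M+1)R^{J-j}$ on $(h_j,h_{j+1})$, where $h_{J+1}=\infty$. At $x\neq p$ the coefficient is $b(x)G'(d(x)^+)-a(x)G'(d(x)^-)$. If $d(x)=h_j$ with $j\geq1$, this equals $G'(d(x)^-)(b(x)/R-a(x))<0$, since $b(x)<R\leq R\,a(x)$. Otherwise $x$ is $2$-valent (a monotone passage or a cut point), and the coefficient is $(b(x)-a(x))G'(d(x))\leq0$. All slopes are $\pm G'$, integers of absolute value $>M$, and breakpoints may sit at arbitrary real points, so no rationality perturbation as in Lemma~\ref{voltage_functions} is needed. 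Your Steps two and three then go through, with $m=\mathrm{val}(p)\,G'(0^+)$ adjustable through the first slope of $G$. Finally, you may not move $p,q$: if they lie on leaves, truncate each leaf beyond them rather than at its attachment point.
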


\begin{remark}
	Forcing $\varphi$ to be constant at the end of the leaves of $C$ will make the construction of the rational equivalence $W\subset C^k\times\R$ between $(p-q)^k$ and $0$ slightly more cumbersome. One could try to view $\varphi$ as a function with values in $\R\cup\{\pm\infty\}$ instead. The reason we could not use this approach, is that tropical intersection theory is not well understood for tropical cycles which intersect the boundary divisor. Although a $1$-valent vertex $x$ has pure sedentarity, hence we could still make sense of the \textit{fibre over $x$} in an algebraic equivalence in $Y\times C$ by using \cite[Section 5.3.1]{tropical_book_mikhalkin_rau}, the proof of Theorem \ref{main_theorem} uses properties of the intersection product which are only known within the zero-sedentarity locus. 
\end{remark}

The proof Proposition \ref{non-zero_slopes} relies on the following Lemma:

\begin{lemma}\label{voltage_functions}
	For any point $x$ of a smooth compact tropical xcurve $C$, there exists a choice of point $y_i$ for every edge $E_i$ of $C$, a rational function $f$ whose divisor is $\mathrm{div}(f)=(k_1+\dots+k_s)\cdot x-k_1\cdot y_1-\dots-k_s\cdot y_s$ for some integers $k_1,\dots,k_s$, and whose slopes are non-zero exactly outside of connected segments adjacent to each $1$-valent vertex of $C$. 
\end{lemma}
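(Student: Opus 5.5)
The plan is to build $f$ as a \emph{voltage function}, that is, the potential of a positive unit flow on the metric graph. Let $C_0$ be the compact core of $C$: delete from each leaf a closed connected segment ending at its $1$-valent vertex, where $f$ will be constant. On each remaining edge $E_i$ of $C_0$ (after subdividing so that $x$ is a vertex) choose an interior point $y_i$. Think of $C_0$ as an electrical network with unit resistance per unit length. Inject current $k_i>0$ at $x$ for each $i$ and extract $k_i$ at $y_i$. The resulting potential is piecewise linear. Its slope along a segment equals the current through that segment, and its divisor is $\sum_i k_i\, x - \sum_i k_i\, y_i$. This matches the form in the statement, with the sign convention of the paper: the coefficient $\delta(v)$ is the sum of outgoing slopes.

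The current must not vanish on any segment. I would avoid any linear algebra by writing down the flow directly. Fix a spanning tree $T$ of $C_0$ containing $x$, and orient $T$ away from $x$.
\begin{itemize}
\item For each non-tree edge $E_i$, choose $y_i$ in its interior. Route one unit of flow from both endpoints of $E_i$ into $y_i$, so that $E_i$ carries nonzero flow on each side of $y_i$.
\item For each tree edge $E_i$, choose $y_i$ in its interior and route one unit into $y_i$ from the parent side only. The part of $E_i$ beyond $y_i$ then carries the flow destined for descendants.
\item Take $f(z) = -\int_x^z$ of the flow (with the sign chosen so that $f$ decreases along the flow), extended along $T$.
\end{itemize}
The subtle point is that on a tree edge the segment beyond $y_i$ may carry zero flow if nothing lies downstream. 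I would fix this by sending extra flow into the $y_j$ of deeper edges, which amounts to increasing $k_j$, or more simply by using two units at $y_i$ and one at its far endpoint.

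The cleanest choice is a \emph{potential-first} construction. Give each point a "height" $h(z)$ equal to its distance from $x$ measured along $T$ in the tree metric. Then set $f=-h$ on $T$ near $x$, and on each edge take $f$ to be the tent function of slopes $\pm1$ peaking downward at $y_i$. This means choosing $f$ first as any function with slopes $\pm 1$ on every segment, with local minima only at the $y_i$ and its unique local maximum at $x$. Its divisor is then automatically of the form $n_x x-\sum n_i y_i$ plus possible contributions at branch vertices.

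The main obstacle is therefore the following. At an interior vertex $v$ of $C_0$, the sum of outgoing slopes must be zero, and on a leaf attachment vertex the leaf must contribute slope $0$. So I need integer flows that are conserved at every vertex except $x$ and the $y_i$, and nonzero on every segment. I would do this by induction over $T$ from the leaves inward. For a subtree rooted at $v$, define the total demand it absorbs as the sum of the $k_j$ over its edges plus contributions from non-tree edges. Choose each $k_j\geq 1$ so that every edge segment carries a positive demand. Since the demand is a sum of positive integers, it is positive whenever something lies downstream. When the part of edge $E_i$ beyond $y_i$ has nothing downstream (its endpoint is a leaf of $T$ with no non-tree edges), I would instead place $y_i$ at that endpoint, or attach that endpoint's leaf segment. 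Near a leaf vertex this reproduces exactly the allowed constant segments. Conservation then holds by construction, the flow is positive everywhere on $C_0$, and integrating it gives $f$ with the required divisor. The slopes are nonzero exactly off the leaf segments, on which $f$ is extended as a constant.
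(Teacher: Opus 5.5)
\textbf{The gap is the final step, ``integrating it gives $f$''.} On a graph with cycles, a conserved flow is the slope field of a continuous function only if its integral around every cycle vanishes (Kirchhoff's voltage law). Conservation, positivity and integrality say nothing about this metric condition. The genuine electrical current of your first paragraph does satisfy it, but its values are real and forced by the edge lengths; once you write the flow down by hand, nothing enforces it.

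Concretely, define $f$ on $T$ by integration, and take a non-tree edge $E_i=[u_i,w_i]$ of length $\ell_i$ that receives $a_i$ units at $y_i$ from $u_i$ and $b_i$ from $w_i$. Continuity at $y_i$ forces $d(u_i,y_i)=\frac{f(u_i)-f(w_i)+b_i\ell_i}{a_i+b_i}$, and this lies in $(0,\ell_i)$ only if $-b_i\ell_i<f(u_i)-f(w_i)<a_i\ell_i$. You choose the $y_i$ freely and never impose this. So the obstacle you isolate, conservation at vertices, is the easy one.

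The recipe does fail. Take the theta graph with three edges of length $\ell$ and $x$ a trivalent vertex, with $T=E_1$. Unit flows give flow $3$ and then $2$ along $E_1$, so $f(x)-f(v)=2\ell+d(x,y_1)$. Continuity on $E_2$ then forces $d(x,y_2)=\tfrac{3\ell+d(x,y_1)}{2}>\ell$, whatever $T$ and $y_1$ are. Enlarging the $k_j$ inductively does not fix this by itself. The tree values $f(u_i),f(w_i)$ depend on all the $a_j,b_j$, so you would need to show the $g(C)$ conditions above can be met simultaneously.

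There is also a sign slip: with $f$ decreasing along the flow, $x$ is a maximum. In the paper's outgoing-slope convention this gives $\mathrm{div}(f)=-(\sum_i k_i)\,x+\sum_i k_i\,y_i$, so you want $-f$.

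\textbf{Comparison with the paper.} The paper works the other way round. It solves $Q(t)\cdot f=(\sum k_i,0,\dots,0,-k_1,\dots,-k_n)$ for the vertex values, so continuity and the support of the divisor are automatic. Only then does it move the $y_i(t)$ to make the slopes rational and non-zero, before rescaling. Your flows make integrality and non-vanishing automatic, but give up exactly what the linear algebra provides.

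Your ``potential-first'' paragraph points the right way, though slopes $\pm1$ cannot balance at a vertex of odd valence. A version that works: fix real vertex values first, and treat each $y_e$ as a free parameter on its own edge.
\begin{itemize}
\item On $e=[u,w]$, linear pieces with non-zero integer outgoing slopes $\sigma_u,\sigma_w$ at $u,w$ meet at an interior point iff $\sigma_u\ell_e>f(w)-f(u)$ and $\sigma_w\ell_e>f(u)-f(w)$. That interior point then has coefficient $-(\sigma_u+\sigma_w)<0$.
\item On a leaf edge, only $\sigma\geq1$ is needed.
\item Balancing at each $v\neq x$ then asks for non-zero integers above these lower bounds that sum to zero. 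This is possible once $\sum_{e\ni v}(f(w_e)-f(v))/\ell_e$ is sufficiently negative for every $v\neq x$, where $w_e$ is the other endpoint of $e$.
\item A large multiple of any vertex function with these sums negative away from $x$ achieves this. Such functions exist because the sums can be prescribed arbitrarily, subject only to totalling zero.
\end{itemize}
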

\begin{proof}
	We will start by constructing a PL function with \textit{real} slopes, then show that by choosing the points $y_i\in E_i$ appropriately these slopes can be taken to be both non-zero and rational, from which point we can multiply this function by a sufficiently large integer to obtain the desired result. 
	
	We will abuse notation and write $\mathrm{div}(f)$ for the obvious extension of the definition of divisor of any PL function $f$, where the slopes are allowed to be any real number (and therefore, so are the coefficients of $\mathrm{div}(f)$).
	
	Certain elements of our construction are similar to that of \cite[Corollary 3]{baker_faber_metrized_graphs}, who show the existence of a piecewise-linear function (with \textit{real} slopes) $f_{x,y}$ whose divisor $\mathrm{div}(f_{x,y})$ is $x-y$ for any pair of points $x$ and $y$ in $C$. 
	
	Fix a vertex set $V(C)=\{v_1,\dots,v_{\vert V(C)\vert}\}$ for $C$ with $v_1=x$, in the sense of \cite{baker_faber_metrized_graphs}. That is, $C\backslash V(C)$ is a finite disjoint union of subspaces isometric to open intervals, whose topological closure in $C$ is isometric to a line segment (as opposed to a circle), and which intersect pair-wise in at most one point. In particular, any vertices of valency $1$ or $\geq3$ of $C$ are in $V(C)$. For each of the $n$ corresponding intervals $e_i$ of $C$, choose a point $y_i$ in its interior and a length parametrisation $[0,l_i]\cong e_i$ (i.e. an orientation for $e_i$). Now choose a real number $\epsilon_i$ such that $(y_i-\epsilon_i,y_i+\epsilon_i)$ lies in the interior of $e_i$, and set $y_i(t):=y_i+t\epsilon_i$ for $t\in(-1,1)$ (see Figure \ref{fig:refined_vertex_set}).

\newcommand{\edgemark}[3]{%
	\pgfmathsetlengthmacro{\crossarm}{1.4*\ptrad}%
	\begin{scope}[shift={(#1,#2)}, rotate=#3]
		\draw[red, line width=1.3pt] (-0.30,0) -- (0.30,0);
		\begin{scope}[rotate=-#3]
			\draw[red, line width=1.3pt] (-\crossarm,-\crossarm) -- (\crossarm,\crossarm);
			\draw[red, line width=1.3pt] (-\crossarm,\crossarm) -- (\crossarm,-\crossarm);
		\end{scope}
\end{scope}}
	
	\begin{figure}[h!]
		\centering
		\begin{tikzpicture}[scale=0.7, >=stealth, line join=round]
			\coordinate (L)  at (-6, 0);
			\coordinate (A)  at (-4, 0);
			\coordinate (Vj) at (-1, 2.0);
			\coordinate (Mv) at (-1, 0.2);
			\coordinate (Bv) at (-1,-1.7);
			\coordinate (R)  at ( 3.4, 0.2);
			\coordinate (x)  at ( 1.64, 0.92);   
			\coordinate (TR) at ( 5.6, 1.3);
			\coordinate (BR) at ( 5.6,-1.3);
			
			\draw[thick] (L)--(A);
			\draw[thick] (A)--(Vj)--(R);      
			\draw[thick] (A)--(Bv)--(R);
			\draw[thick] (Vj)--(Mv)--(Bv);
			\draw[thick] (Mv)--(R);
			\draw[thick] (R)--(TR);
			\draw[thick] (R)--(BR);
			
			\draw[->, thick] (-0.541,-1.502) -- (0.148,-1.205);
			\draw[->, thick] ( 1.433,-0.649) -- (2.076,-0.372);
			
			\foreach \p in {A,Vj,Mv,Bv,x,R} {\filldraw[black] (\p) circle (\ptrad);}
			\pgfmathsetlengthmacro{\sqside}{2*\ptrad}
			\foreach \p in {L,TR,BR} {\fill[blue] (\p) ++(-\ptrad,-\ptrad) rectangle ++(\sqside,\sqside);}
			
			\node[above]       at (Vj) {$v_j$};
			\node[above right]  at (x)  {$x$};
			
			\edgemark{-5.0}{0.0}{0}
			\node[below] at (-5.0,0.0) {$y$};
			\edgemark{-2.5}{1.0}{33.69}
			\edgemark{0.54}{1.37}{-22.25}
			\edgemark{-1.0}{1.1}{90}
			\edgemark{-1.0}{-0.75}{90}
			\edgemark{1.3}{0.2}{0}
			\edgemark{4.5}{0.75}{26.57}
			\edgemark{4.5}{-0.55}{-34.29}
			\edgemark{-2.5}{-0.85}{-29.54}
			\edgemark{0.9}{-0.88}{23.35}
			\node[above left] at (1.24,-1) {$y_i(t)$};
			
			\draw[<->, thick] (-0.731,-2.020) -- (0.948,-1.295);
			\node at (0.245,-2.1) {$e_{i,-}(t)$};
			\draw[<->, thick] ( 1.169,-1.200) -- (3.237,-0.306);
			\node at (2.4,-1.2) {$e_{i,+}(t)$};
		\end{tikzpicture}
		\caption{An example of the vertex sets $V(C)$ and $V(t)$. Points marked by black circles and $1$-valent vertices marked by blue squares form a vertex set $V(C)$. Adding the points marked by red crosses, which can be made to vary along small intervals in their restective edge interiors according to $t\in(-1,1)$, yields the refined vertex set $V(t)$.}
		\label{fig:refined_vertex_set}
	\end{figure}

\def\ptrad{2.6pt}   

	For a given $t\in(-1,1)$, set $V(t):=V(C)\cup\{y_1(t),\dots, y_n(t)\}$, a refined vertex set. We set $r:=\vert V(t)\vert=\vert V(C)\vert +n$, and use the notation $V(t)=\{v_1(t),\dots,v_r(t)\}$ for vertices of $V(t)$, where $v_1(t)=x$, $v_i(t)=v_i$ for $i\leq\vert V(C)\vert$ are the vertices of $V(C)$, and $v_{j+\vert V(C)\vert}=y_j(t)$ for $j\in\{1,\dots,n\}$. 
	Notice the number of segments is now $2n$ rather than $n$ by construction; each $e_i$ has been split into two segments which we denote $e_{i,-}(t)$ and $e_{i,+}(t)$, with lengths $l_{i,-}(t)$ and $l_{i,+}(t)=l_i-l_{i,-}(t)$. With the chosen orientation for $e_i$, these are respectively the segments from its origin to $y_i(t)$ and from $y_i(t)$ to its endpoint (see Figure \ref{fig:refined_vertex_set}). 
	
	The data of a piecewise-linear function $f$ on $C$ whose domain of linearity coincides with the segments $e_{i,\pm}(t)$ is the data of a point in $\R^{r}$, assigning to each vertex $v(t)\in V(t)$ a value $f(v(t))$. When varying $t$, this data yields a family of PL functions $\{f_t\}_{t\in(-1,1)}$ by setting $f_t(v(t))$ to be independent of $t$. Notice that our requirement that the function $f$ we seek is constant on a neighbourhood of each $1$-valent vertex can be implemented as follows.  Let $v_{i_1},\dots,v_{i_m}\in V(C)$ denote the $1$-valent vertices of $C$, and $y_{i_1}(t),\dots,y_{i_m}(t)$ their adjacent vertices in $V(t)$. We impose that $f_t(v_{i_j})=f_t(y_{i_j})$ for any $j=1,\dots,m$. This reduces the input data for $f_t$ to an element of $\R^{r-m}\subset\R^r$. With this in mind, we reduce the set $V(t)$ to $\tilde{V}(t):=V(t)\backslash\{v_{i_1},\dots,v_{i_m}\}$, set $\tilde{r}:=r-m$, re-arrange the indexing so that $\tilde{V}(t):=\{v_1(t),\dots,v_{\tilde{r}(t)}\}=\{x,\dots,y_{1}(t),\dots,y_{n}(t)\}$, and remove the segments adjacent to the $1$-valent vertices from our considerations, leaving us with $2n-m$ segments in the domain of linearity of the rational functions we build. 
	
	Removing the $1$-valent vertices from our considerations also means all remaining segments have finite length. The slope $s_{i,\pm}(f_t)$ of $f_t$ on $e_{i,\pm}(t)$ is given by 
	$$s_{i,\pm}(f_t):=\frac{f_t(v_j(t))-f_t(v_i(t))}{l_{i,\pm}(t)},$$ 
	if $e_{i,\pm}(t)$ is the segment going from $v_i(t)$ to $v_j(t)$. Notice that only the denominator $l_{i,\pm}(t)$ changes when varying $t$ by construction of the functions $f_t$.  
	
	By definition, the coefficient of $\mathrm{div}(f_t)$ at a vertex $v\in V(t)$ is $$\sum_{e_{i,\pm}(t) \text{ adjacent to } v} \pm s_{i,\pm}(f_t),$$ where the sign is $+1$ if $e_{i,\pm}(t)$ is outgoing from $v$, and $-1$ if it terminates at $v$. As found in e.g.\ \cite{bollobas_modern_graph_theory} or \cite[Definition 7]{baker_faber_metrized_graphs}, this can be encored in an $\tilde{r}\times\tilde{r}$-matrix $Q(t)$ with entries
	\begin{align}
		Q_{jk}(t)=\begin{cases}
			-w_{jk} (t)& \text{if $j\neq k$} \\
			\sum_{s=1}^{r}w_{js}(t) & \text{if $j=k$},
		\end{cases} \notag
	\end{align}
	where $w_{jk}(t)=w_{kj}(t)$ is defined as $\frac{1}{l_{i,\pm}(t)}$ if $v_j$ and $v_k$ are connected by a segment $e_{i,\pm}(t)$, and zero if there is no such edge. 
	In these terms, $\mathrm{div}(f_t)=Q(t)\cdot f_t(V(t))$ where $f_t(V(t))=(f_t(v_1(t)),\dots,f_t(v_{\tilde{r}}(t)))$, in the sense that the coefficient of $v_i(t)$ in $\mathrm{div}(f_t)$ is given by the $i^{th}$-coordinate of $Q(t)\cdot f_t(V(t))$. 
	
	We will start by finding a $PL$ function $f_t$ with $\mathrm{div}(f_t)=(k_1+\dots+k_{n})\cdot x-k_1\cdot y_1-\dots-k_{n}\cdot y_{n}$. This amounts to finding a solution to the system $Q(t)\cdot (x_1,\dots,x_{\tilde{r}})=(k_1+\dots+k_n,0,\dots,0,-k_1,\dots,-k_n)$.

	Notice that if $f_t$ is non-constant, then it must achieve its maximum value at a vertex whose coefficient in $\mathrm{div}(f_t)$ is strictly negative (this observation also appears in \cite[Theorem 2]{baker_faber_metrized_graphs} under the name of \enquote{Maximum Principle}). In particular, $\mathrm{div}(f_t)=0$ necessarily implies $f_t$ is constant, which means $\ker(Q(t))$ is $1$-dimensional, so by the rank-nullity Theorem its image is $(r -1)$-dimensional. In fact, it is determined precisely by the codimension-$1$ condition that the divisor has degree zero, so a point in $\R^{\tilde{r}}$ lies in the image of $Q(t)$ if and only if its coordinates sum to zero. In particular, for fixed $t$ and positive integers $k_1,\dots,k_{n}$, there is a function $f_t$ (unique up to a constant) such that $Q(t)\cdot(f_t(v_1(t)),\dots,f_t(v_{\tilde{r}}(t)))=(k_1+\dots+k_n,0,\dots,0,-k_1,\dots,-k_n)$. Recall, such a function is extended to the whole of $C$ by setting its value at the $1$-valent vertices of $C$ to be equal to the value at its adjacent $y_{i_j}(t)$, thereby ensuring it is constant on a neighbourhood of the $1$-valent vertices of $C$, and nowhere else. 
	
	We still need to impose that $f_t$ should have integer slopes; this is where we vary $t$. There are $2n$ slopes $\{s_{i,\pm}(t)\}_{i=1}^n$, each varying strictly monotonically with $t$: the $\{s_{i,-}(t)\}_{i=1}^n$ (resp. $\{s_{i,+}(t)\}_{i=1}^n)$ are strictly decreasing (resp. increasing) with $t$, as the denominators $l_{i,-}(t)$ (resp. $l_{i,+}(t))$ strictly increase (resp. decrease)). Because $\Q\backslash\{0\}$ is dense in $\R$ and because there are finitely many slopes, there exists $t\in(-1,1)$ making all of the $\{s_{i,\pm}(t)\}_{i=1}^n$ rational and non-zero. It now suffices to multiply the corresponding $f_t$ by a sufficiently large integer $N$ so that $\{N\cdot s_{i,\pm}(t)\}_{i=1}^n$ are non-zero integers, and we have $\mathrm{div}(N\cdot f_t)=N\cdot(k_1+\dots+k_n)\cdot x-N\cdot k_1\cdot y_1-\dots-N\cdot k_n\cdot y_s$ as desired.
\end{proof}

\begin{proof}[Proof of Proposition \ref{non-zero_slopes}]
		For $k'>g(C)$, there exists a rational function function $\varphi_{0}:C\rightarrow\R$ with $\mathrm{div}(\varphi_0)=k'\cdot p-q-D_0$ for an effective divisor $D_0$, by \cite[Corollary 6.6]{mikhalkin_Zharkov_trop_curves_jac_theta_funct}. However $\varphi_0$ may well be constant on some of its linearity segments. Using the previous Lemma \ref{non-zero_slopes} for $x=p$, we find a rational function $f$ on $C$ whose slopes are nowhere-zero, and whose divisor is $(N_1+\dots+N_n)\cdot p-N_1\cdot x_1-\dots-N_n\cdot x_n$ for $x_1,\dots, x_n$ interior edge points of $C$ and any positive integers $N_1,\dots N_n$. 
		Taking these integers to be sufficiently large, the function $\varphi:=\varphi_0+f$ is a rational function whose slopes are all non-zero, and $$\mathrm{div}(\varphi)=(k+N_1+\dots+N_n)\cdot p-q-D_0-N_1\cdot x_1-\dots-N_n\cdot x_n,$$ so taking $D=D_0+N_1\cdot x_1+\dots+N_n\cdot x_n$ and $k=k'+N_0+\dots+N_n$ yields the desired result. 
\end{proof}

\begin{remark}
	Notice one could choose $D$ to be supported only on $2$-valent vertices of $C$, by choosing the $N_1,\dots,N_n$ to cancel out the coefficients of higher valency vertices in $D_0$. This could serve to limit the maximal valency of vertices in $\graphvarphi$: for instance if $C$ is trivalent and $p$ and $q$ are also $2$-valent, then $\graphvarphi$ is also trivalent. Although in this paper, we have no reason to limit the maximal valency of $\graphvarphi$, we will make use of this strategy in the follow-up paper \cite{danil_and_me}, as it will facilitate the construction of a Lagrangian lift. With a similar goal in mind, we point out that the function $\varphi$ constructed can be chosen so that no distinct points in the support of $\mathrm{div}(\varphi)$ share the same image. To see this, note first that $p$ must be a (strict) global minimum of $\varphi$, and $q$ a (strict) local maximum, therefore $\varphi(p)>\varphi(q)$. So it remains to impose that the $\varphi(d_i)$ are distinct and different from $\varphi(q),\varphi(p)$ for all points $d_i$ in the support of $D$. This amounts to finding solutions in $\R^{r-m}$ which lie in the complement of finitely many proper hyperplanes in the proof of Lemma \ref{voltage_functions}, which can always be achieved by our perturbation argument. 
\end{remark}

We now construct the family of points $A(t)_{t\in\R}$ as follows. For sufficiently large $k$, choose a rational equivalence $k\cdot p-q-D=\mathrm{div}(\varphi)$ on $C$ as in Proposition \ref{non-zero_slopes}, i.e. $D$ is an effective divisor, $k$ a large enough integer, and $\varphi$ a rational function on $C$ which is constant only on neighbourhoods of its $1$-valent vertices. The balanced graph construction associates to $\varphi$ a tropical $1$-cycle $\graphvarphi$ in $C\times\R$, and points in $A(t)$ are points of $\graphvarphi_t$ counted with a multiplicity we now define.

Let us begin with interior edge points $(x,t)$ of $\graphvarphi$. These fit into two categories:
\begin{enumerate}
	\item $(x,t)$ lies in the interior of an edge $E$ of $\graphvarphi$ which corresponds to the image of a segment of $C$ on which $\varphi$ is linear; 
	\item $(x,t)$ lies in the interior of an edge $E$ of $\graphvarphi$ of the form $\{x\}\times[\varphi(x),+\infty)$ or $\{x\}\times(-\infty,\varphi(x)]$.
\end{enumerate}
If $(x,t)$ is of the first type, we include the point $x\in C$ in the set $A(t)$ with multiplicity equal to the absolute value of the slope of $\varphi$ on the corresponding segment. Note this includes points for which $x$ lies on the interior of segments adjacent to $1$-valent vertices on which $\varphi$ is constant; because their multiplicity is zero, points of such segments do not appear in $A(t)$. We extend this to $1$-valent vertices of $C$, to which we also assign multiplicity $0$ and hence do not appear in $A(t)$.  If $(x,t)$ is of type $(2)$, we include the points $x\in C$ in $A(t)$ with multiplicity equal to the weight of the corresponding edge $E$.

Now, we turn to points $(x,t)$ of $\graphvarphi_t$ of valency $\geq 3$. It will be useful to introduce the notion of \textit{boundary point}:
\begin{definition}
	Let $f$ be a bounded rational function on a compact smooth tropical curve $C$. A \textit{boundary point} of $C$ for $f$ is a point $y\in C$ which is the only endpoint of valency $\geq2$ of $C$ of a segment adjacent to a $1$-valent vertex of $C$ on which $f$ is constant (an example is the point labelled $y$ on the left of Figure \ref{fig:refined_vertex_set}). 
\end{definition}
Notice that by our construction of $\varphi$ above, it follows that each endpoint for $\varphi$ is $2$-valent, and hence adjacent to a \textit{single} $1$-valent vertex $x$ of $C$. It also follows that they lie in the support of $\mathrm{div}(f)$, and $(y,\varphi(y))$ necessarily has valency $3$ in $\graphvarphi$. 

Let $(x,t)$ be any point of $\graphvarphi_t$ of valency $\geq 3$. Its adjacent edges split into \textit{positive}, \textit{negative}, and \textit{null} edges, depending on whether the $\R$-component of their outgoing primitive vector is strictly positive, strictly negative, or zero. By construction, null adjacent edges only occur if $x$ is a boundary point. We include the point $x\in C$ in $A(t)$ with multiplicity equal to the sum of the multiplicities of points arbitrarily close to $(x,t)$ along each of the positive (or negative) edges. Let us verify that this is well-defined. Edges adjacent to $(x,t)$ can be of type $(1)$ or $(2)$ as above. If there are only adjacent edges of type $(1)$, then $x$ corresponds to a point of $C$ of valency $\geq 3$ which is not in the support of $\mathrm{div}(\varphi)$, i.e. the sum of outgoing slopes of $\varphi$ around $x$ is zero, making the multiplicity well-defined. If $(x,t)$ has an adjacent edge of type $(2)$ (note there can only be one), then the weight of that edge is defined precisely to ensure it compensates for the difference in the sums of positive or negative outgoing slopes, ensuring the multiplicity is well-defined. 

Let $\varphi(C)=[a,b]$, and $a=t_0<\dots<t_r=b$ be $\R$-coordinates of points of $\graphvarphi$ of valency $\geq3$.

\begin{lemma}
 	For any given $t\in\R$, $A(t)=\{x_1(t),\dots,x_k(t)\}$ contains $k$ points (with multiplicity), and satisfies the conditions listed in Section \ref{section:results}. Namely, $A(t)$ is well-defined up to a permutation on each connected component of $\R\backslash\{t_0,\dots,t_r\}$, and these permutations can be chosen to make each $x_i(t)\in C$ continuous. Furthermore,  for any $t<a$, $A(t)=\{p,\dots,p\}$, and for any $t>b$, $x_i(t)=q$ for some $i$.
	\label{lemma:A(t)_well_def}
\end{lemma}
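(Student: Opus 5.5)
The plan is to interpret the multiplicities defining $A(t)$ through the balancing condition of the tropical $1$-cycle $\graphvarphi\subset C\times\R$, projected to the $\R$-factor. Every edge $E$ of $\graphvarphi$ carries a weight $w(E)$ and a primitive direction $u_E=(u_E^C,u_E^\R)$. I first check that, in all cases of the definition of $A(t)$, the multiplicity attached to an interior point of $E$ equals $w(E)\,\vert u_E^\R\vert$:
\begin{itemize}
\item for an edge of type (1), $w(E)=1$ and $u_E=(\pm1,\pm s)$ with $s$ the slope of $\varphi$, giving multiplicity $\vert s\vert$;
\item for a vertical ray of type (2), $u_E=(0,\pm1)$, giving multiplicity $w(E)$;
\item for an edge on which $\varphi$ is constant (only near $1$-valent vertices, by Proposition \ref{non-zero_slopes}), $u_E^\R=0$, consistently with multiplicity $0$.
\end{itemize}
Then at a vertex $(x,t)$ of valency $\geq3$, the $\R$-component of the balancing condition reads
\[
\sum_{E\ \text{positive}} w(E)\,u_E^\R=\sum_{E\ \text{negative}} w(E)\,\vert u_E^\R\vert ,
\]
with null edges contributing nothing. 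This gives the well-definedness of the vertex multiplicity uniformly, without the case distinction of the text.

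Next, I show that $\#A(t)$ (with multiplicity) is independent of $t$. On each open interval $(t_{i-1},t_i)$, and on $t<t_0$ and $t>t_r$, the slice $\graphvarphi_t$ consists only of interior edge points. These are finitely many, since no non-leaf segment has slope zero, and by the previous paragraph their multiplicities are constant along each edge. So the count is constant on each interval. When $t$ crosses some $t_i$, the balancing identity above says that the total multiplicity entering each vertex from below equals the total leaving it above, while points not at a vertex are unaffected. Hence the count is globally constant.

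To evaluate it, I look at $t<a=\min\varphi$. Only downward vertical rays meet $C\times\{t\}$, and these sit exactly at the points with positive coefficient in $\mathrm{div}(\varphi)=k\cdot p-q-D$. Since $D$ is effective, the only such point is $p$, with weight $k$. So $A(t)=\{p,\dots,p\}$ and $\#A(t)=k$ for all $t$. For $t>b$, only upward rays are met; these sit at the negative-coefficient points $q$ and $\mathrm{supp}(D)$, so $q\in A(t)$. Here I would still check whether $q$ can cancel against $D$, i.e.\ whether $q\in\mathrm{supp}(D)$. This is harmless: the coefficient at $q$ only becomes more negative, so the upward ray at $q$ still exists.

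For the parametrisation, on each open interval I list the points of $\graphvarphi_t$, repeating each according to its multiplicity. Each point moves continuously along its edge: along type (1) edges via the local inverse of $\varphi$, which exists because the slope is nonzero, and it stays fixed on vertical rays. Any labelling is unique up to permutation, and it gives continuous functions $x_i(t)$ there. At a critical value $t_i$, every label arriving from below converges to the $C$-coordinate $x$ of some vertex $(x,t_i)$. By the balancing count, the labels converging to $(x,t_i)$ from below and those emanating from it above have the same total multiplicity. Choosing an arbitrary bijection between them at each vertex, and composing the labelling on the next interval with the resulting permutation, makes every $x_i$ continuous through $t_i$, with $x_i(t_i)=x$. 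Inducting over $t_0<\dots<t_r$ gives global continuous $x_i$.

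The main obstacle I expect is bookkeeping at the special vertices: boundary points $y$ with a null edge towards a $1$-valent vertex, and points $(x,\varphi(x))$ carrying both type (1) edges and a vertical ray. There one must confirm that null edges really contribute zero to both sides, and that no point of $A(t)$ escapes to a $1$-valent vertex. This should follow from the uniform formula $w(E)\,\vert u_E^\R\vert$ together with Proposition \ref{non-zero_slopes}, which guarantees the only horizontal edges are the leaf segments. I would verify it by writing out the local picture at a trivalent boundary vertex explicitly.
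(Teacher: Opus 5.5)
Your proposal is correct and follows the paper's proof. As in the paper, you read off $A(t)$ for $t<a$ and $t>b$ from the vertical rays of $\graphvarphi$ at the positive and negative parts of $\mathrm{div}(\varphi)$, note that the count is constant on each component of $\R\backslash\{t_0,\dots,t_r\}$, and get conservation across each $t_i$ from the well-definedness of vertex multiplicities. The only differences are in presentation: you derive well-definedness uniformly from the $\R$-component of the balancing condition of $\graphvarphi$, which is equivalent to the paper's case analysis since the vertical ray's weight is exactly the divisor coefficient, and you spell out the vertex-by-vertex relabelling that the paper dismisses as ``continuity is immediate'' (a label can also reach height $t_i$ at a non-vertex edge point, where it simply continues).
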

\begin{proof}
	Continuity is immediate. Furthermore, by construction of $\graphvarphi$, the only point in $\graphvarphi_t$ for $t<a$ is $\{(p,t)\}$, lying on the interior of the semi-infinite edge $\{p\}\times(-\infty,a]$ which has weight $k$; hence $p$ is the only point of $A(t)$ with multiplicity $k$. Similarly, because $\varphi$ gives the rational equivalence $k\cdot p-q-D\sim0$ in which $q$ appears with coefficient $\leq-1$, then for any $t>b$, $\graphvarphi_t$ contains only points from positive semi-infinite edges of $\graphvarphi$, of which one has the form $\{q\}\times[\varphi(q),+\infty)$ with multiplicity $\geq1$. 
	It remains to show that for any given $t\in\R$, the sums of multiplicities of points occuring in $A(t)$ sum to $k$. Notice that as long as we vary $t\in\R$ continuously such that for all $x_i(t)\in A(t)$, the points $(x_i(t),t)$ remain on the interior of an edge of $\graphvarphi$, the multiplicities with which each $x_i(t)$ appears in $A(t)$ is constant. Hence it suffices to show that varying $t$ continuously in $(t_i-\epsilon,t_i+\epsilon)$ for $t_i\in\{1,\dots,r\}$ does not change the sum of multiplicities of corresponding points in $A(t)$. But the multiplicity of a point $x_j(t_i)$ coming from $(x_j(t_i),t_i)$ of valency $\geq3$ is defined as the sum of multiplicities of nearby points coming from interior edge points of $\graphvarphi$, either on positive or negative edges. So this is equivalent to the fact that these multiplicities are well-defined.
\end{proof}

\begin{example}\label{example_1}
	In Figure \ref{fig:tildeW_type_i_vertex}, we illustrate a portion of $\graphvarphi$ for values of $t$ in $(t_i-\epsilon,t_i+\epsilon)$, where $t_i$ is the $\R$-coordinate of a single $3$-valent vertex of $\graphvarphi$, corresponding to a point $x_1(t_i)$ of $C$ which is \textit{not} in the support of $\mathrm{div}(\varphi)$. The two edges on the right are respectively (from left to right) a semi-infinite vertical edge of weight $w=3$ and the image of a segment of $C$ on which $\varphi$ has slope $s=2$.
		\begin{figure}[h!] 
		\centering
		\begin{tikzpicture}[scale=0.9]
			\draw[axis] (0.5,1) -- (0.5,6) node[above left] {$\mathbb{R}$};
			\draw[axis,<->] (0,1.2) -- (11,1.2) node[right] {$C$};
			
			\draw[level] (0.5,4.0) -- (10.2,4.0) node[right, black] {$t>t_i$};
			\draw[level] (0.5,3.3) -- (10.2,3.3) node[right, black] {$t=t_i$};
			\draw[level] (0.5,2.3) -- (10.2,2.3) node[right, black] {$t<t_i$};
			
			\draw[branch] (3.4,3.3) -- (3.02,2.0) node[below] {$s=-2$};      
			\draw[branch] (3.4,3.3) -- (2.8,5.0) node[above left] {$s=1$};    
			\draw[branch] (3.4,3.3) -- (4.2,5.0) node[above] {$s=1$};   
			
			\node[node] at (3.4,3.3) {};
			\node[right] at (3.55,3.15) {};
			
			\node[dot] at (3.11,2.3) {};  \node[right] at (3.18,2.5) {$(x_1(t),t)$};
			\node[dot] at (3.15,4.0) {};  \node[above] at (2.2,3.85) {$(x_1(t),t)$};
			\node[dot] at (3.73,4.0) {};  \node[above] at (4.67,3.85) {$(x_2(t),t)$};
			
			\draw[branch] (7,2.0) -- (7,4.7) node[above] {$w=3$};
			\node[dot] at (7,3.3) {};
			\node[below right] at (5.14,3.31) {$(x_3(t),t)$};
			
			\draw[branch] (7.85,2.0) -- (9.2,4.7) node[above] {$s=2$};
			\node[node] at (8.5,3.3) {};
			\node[below right] at (8.4,3.31) {$(x_4(t),t)$};
		\end{tikzpicture}
		\caption{An example of a piece of $\graphvarphi$ for $t\in(t_i-\epsilon,t_i+\epsilon)$, where $t_i$ is the $\R$-coordinate of a single $3$-valent vertex of $\graphvarphi$, corresponding to a point $x_1(t_i)$ of $C$ which is not in the support of $\mathrm{div}(\varphi)$.}
		\label{fig:tildeW_type_i_vertex}
	\end{figure}
	For $t\in(t_i-\epsilon,t_i)$, we can take $A(t)=\{x_1(t),x_1(t),x_3(t),x_3(t),x_3(t),x_4(t),x_4(t)\}$. The same multiplicities hold for $t=t_i$ as none of the negative edges collide. However, several positive edges branch off after $t=t_i$, because of the trivalent vertex $(x_1(t_i),t_i)$ of $\graphvarphi$. For $t\in(t_i,t_i+\epsilon)$, we can take $A(t)=\{x_1(t),x_2(t),x_3(t),x_3(t),x_3(t),x_4(t),x_4(t)\}$. 
\end{example}

\begin{example}
	In the example represented in Figure \ref{fig:example_2}, we vary $t$ across $t_i\in\{t_0,\dots,t_r\}$ which is the $\R$-coordinate of a boundary point for $\varphi$, $y\in C$, as well as a trivalent vertex $(x,t_i)$ of $\graphvarphi$ for which $x$ is in the support of $\mathrm{div}(\varphi)$. 
\begin{figure}[h!]
	\centering
	\begin{tikzpicture}[scale=0.70, >=stealth, line join=round]
		\draw[->, thick] (0.5,0.4) -- (0.5,7.6) node[left] {$\mathbb{R}$};
		\draw[<->, thick] (0.0,0.9) -- (15.0,0.9);
		\node[below] at (14.5,0.88) {$C$};
		
		\def\ybot{2.5} \def\ymid{4.2} \def\ytop{5.9}
		\draw[blue, dashed, thick] (0.9,\ytop) -- (13.2,\ytop) node[right, black]{$t>t_i$};
		\draw[blue, dashed, thick] (0.9,\ymid) -- (13.2,\ymid) node[right, black]{$t=t_i$};
		\draw[blue, dashed, thick] (0.9,\ybot) -- (13.2,\ybot) node[right, black]{$t<t_i$};
		
		\coordinate (V1) at (3.5,\ymid);
		\coordinate (d2) at (4.7,\ytop);
		\coordinate (e1) at (3.5,\ybot);
		\draw[thick] (1.4,\ymid) -- (V1);                
		\draw[thick] (V1) -- (d2) -- (5.35,6.95);         
		\draw[thick] (V1) -- (e1) -- (3.5,1.6);           
		\node[above left] at (5.25,6.9) {$s=2$};
		\node[below] at (3.5,1.55) {$w=2$};
		\node[above left] at (d2) {$(x_{1}(t),t)$};
		\node[above right] at (e1) {$(x_{1}(t),t)$};
		
		\coordinate (mt) at (7.38,\ytop);   
		\coordinate (m2) at (7.82,\ymid);   
		\coordinate (mb) at (8.26,\ybot);   
		\draw[thick] (7.09,7.0) -- (8.50,1.6);
		\node[above right] at (7.09,7.0) {$s=2$};
		\node[above right] at (7.7,\ymid) {$(x_2(t),t)$};
		
		\coordinate (V3) at (11.5,\ymid);
		\coordinate (d3) at (11.5,\ytop);
		\coordinate (e3) at (10.6,\ybot);
		\coordinate (e4) at (12.4,\ybot);
		\draw[thick] (V3) -- (d3) -- (11.5,6.95);         
		\draw[thick] (V3) -- (e3) -- (10.2,1.6);          
		\draw[thick] (V3) -- (e4) -- (12.8,1.6);          
		\node[above] at (11.5,6.95) {$w=2$};
		\node[below left]  at (10.15,1.55) {$s=1$};
		\node[below right] at (12.85,1.55) {$s=1$};
		\node[above right] at (d3) {$(x_{3}(t),t)$};
		\node[above left]  at (10.89,\ybot) {$(x_{3}(t),t)$};
		\node[above right] at (12.2,\ybot) {$(x_{4}(t),t)$};
		
		\foreach \p in {V1,d2,e1,m2,mb,V3,d3,e3,e4} {\filldraw (\p) circle (2.6pt);}
		\draw[fill=black, thick] (mt) circle (2.6pt);     
	\end{tikzpicture}
	\caption{An example of a piece of $\graphvarphi$ for $t\in(t_i-\epsilon,t_i+\epsilon)$, where $t_i$ is the $\R$-coordinate of a boundary point for $\varphi$ as well as a trivalent vertex $(x,t_i)$ of $\graphvarphi$ for which $x$ is in the support of $\mathrm{div}(\varphi)$.}
	\label{fig:example_2}
\end{figure}
	For $t<t_i$, $\graphvarphi_t$ contains four points: from left to right, $x_1(t)$ lies on the interior of a semi-infinite vertical edge, $x_2(t)$ lies on the interior of an edge which is the image of a segment of $C$ on which $\varphi$ has slope $\pm2$, and $x_3(t)$ and $x_4(t)$ lie on the interior of adjacent edges which come from (adjacent) segments of $C$ on which $\varphi$ has slope $\pm1$. Therefore, we can take $A(t)=\{x_1(t),x_1(t),x_2(t),x_2(t),x_3(t),x_4(t)\}$. Similarly, for $t=t_i$ we can take $A(t_i)=\{x_1(t_i),x_1(t_i),x_2(t_i),x_2(t_i),x_3(t_i),x_3(t_i))\}$, and $x_3(t_i)=x_4(t_i)$ causes a change in the multiplicities appearing in $A(t)$. Finally, for $t>t_i$, we can take $A(t)=\{x_1(t),x_1(t),x_2(t),x_2(t),x_3(t),x_3(t)\}$, and this time $x_3(t)=x_3$ lies on a vertical semi-infinite edge. 
\end{example}

\begin{remark}\label{remark:increasing_k}
	We will see that procedure of increasing $k$ to ensure \enquote{transversality} through Lemma \ref{non-zero_slopes} will correspond to increasing the smallest $k$ for which $k!\cdot(p-q)^k$ is rationally trivial, and therefore weakening the result. In fact, we expect the result to hold for any $k>g(C)$, even if our proof does not provide this optimal bound. The reason we have made this choice, is that allowing $\varphi$ to have zero slopes on a segment of $C$ means its interior points would not appear in $A(t)$. When constructing the rational equivalence $W\subset C^k\times\R$ from $A(t)$, this will obstruct balancing, and edges will need to be added manually to compensate for this. This is what we will do for the semi-infinite edges on which $\varphi$ is constant, and makes the construction of $W$ more involved. The same procedure for interior segments of $C$ would be significantly more cumbersome. 
\end{remark}

\begin{example}
	This final example represents the full two-ended balanced graph of a rational function $f$ on a circle or circumference $a$, $S^1_A\cong\R/a\Z$, whose divisor is $\mathrm{div}(f)=3\cdot p-q-2\cdot y$. This is the function described in Example \ref{ex:increase_k_get_non-zero_slopes}; applying to it our construction below yields $3!\cdot(p-q)^3\sim0$ on $S^1_a$, which illustrates Remark \ref{remark:increasing_k} above since we prove in Section \ref{section:examples_1} that while there can be no rational function $\varphi$ on $S^1_a$ whose slopes are nowhere-zero and whose divisor is $\mathrm{div}(\varphi)=2\cdot p-q-y$ for some $y\in S^1_a$, it is nevertheless true that $2!\cdot (p-q)^2\sim0$. 
\begin{figure}
\begin{tikzpicture}[>=Stealth,line cap=round,scale=0.85]
	
	\tikzset{dotted end/.style={very thick,line cap=round,dash pattern=on 0pt off 5pt}}
	
	\pgfmathsetmacro{\Sa}{12}                 
	\pgfmathsetmacro{\LI}{3}                  
	\pgfmathsetmacro{\LII}{(\Sa-3*\LI)/2}     
	\pgfmathsetmacro{\LIII}{(\Sa+\LI)/2}      
	\pgfmathsetmacro{\Sq}{\LI}                
	\pgfmathsetmacro{\Sy}{\LI+\LII}           
	\pgfmathsetmacro{\Fq}{2*\LI}              
	\pgfmathsetmacro{\Fy}{\LIII}              
	
	\pgfmathsetmacro{\Ry}{2.05}   
	\pgfmathsetmacro{\Rx}{0.80}   
	\pgfmathsetmacro{\Kx}{0.66}   
	\pgfmathsetmacro{\Rot}{22.5}  
	\pgfmathsetmacro{\Deg}{360/\Sa}
	\pgfmathsetmacro{\Xmin}{-8.5} 
	\pgfmathsetmacro{\Xmax}{13.5} 
	\pgfmathsetmacro{\Shid}{(180-\Rot)/\Deg}   
	\pgfmathsetmacro{\Svis}{(360-\Rot)/\Deg}   
	
	\def\CYL#1#2{({(#2)*\Kx + \Rx*sin(\Rot+\Deg*(#1))},{\Ry*cos(\Rot+\Deg*(#1))})}
	\def\Ff#1{ifthenelse(#1<\Sq, 2*(#1), ifthenelse(#1<\Sy, \Fq+(#1-\Sq), \Fy-(#1-\Sy)))}
	\def\GRAPHARC#1#2{plot[domain=#1:#2,samples=80,variable=\s,smooth]
		({(\Ff{\s})*\Kx + \Rx*sin(\Rot+\Deg*\s)},{\Ry*cos(\Rot+\Deg*\s)})}
	
	\path \CYL{0}{0}         coordinate (p);
	\path \CYL{\Sq}{\Fq}     coordinate (q);
	\path \CYL{\Sy}{\Fy}     coordinate (y);
	\path \CYL{0}{-6.2}      coordinate (pend);
	\path \CYL{\Sq}{\Fq+6.0} coordinate (qend);
	\path \CYL{\Sy}{\Fy+4.5} coordinate (yend);
	\path \CYL{1.5}{3}       coordinate (m1);
	\path \CYL{3.75}{6.75}   coordinate (m2);
	\path \CYL{8}{4}         coordinate (m3);
	
	\fill[gray!8]  ({\Xmin*\Kx},\Ry) -- ({\Xmax*\Kx},\Ry)
	arc[start angle=90,end angle=-90,x radius=\Rx,y radius=\Ry]
	-- ({\Xmin*\Kx},-\Ry)
	arc[start angle=-90,end angle=-270,x radius=\Rx,y radius=\Ry] -- cycle;
	\draw[gray!30] ({\Xmin*\Kx},\Ry) -- ({\Xmax*\Kx},\Ry)
	arc[start angle=90,end angle=-90,x radius=\Rx,y radius=\Ry]
	-- ({\Xmin*\Kx},-\Ry)
	arc[start angle=-90,end angle=-270,x radius=\Rx,y radius=\Ry];
	
	\foreach \t in {-7.5,-1,5.5,12.5}{%
		\draw[gray!40]                ({\t*\Kx},\Ry) arc[start angle=90,end angle=-90,x radius=\Rx,y radius=\Ry];
		\draw[gray!40,densely dotted] ({\t*\Kx},\Ry) arc[start angle=90,end angle=270,x radius=\Rx,y radius=\Ry];
	}
	\node[gray!70,font=\small,below=2pt] at ({-7.5*\Kx},-\Ry) {$S^1_a\times\{t\}$};
	
	
	\draw[very thick,gray!60] \GRAPHARC{\Shid}{\Svis};
	\draw[very thick] \GRAPHARC{0}{\Shid};
	\draw[very thick] \GRAPHARC{\Svis}{\Sa};
	
	\draw[very thick] (p) -- ($(p)!0.70!(pend)$) node[pos=0.45,above=1pt] {$3$};
	\draw[dotted end] ($(p)!0.74!(pend)$) -- (pend);
	\draw[very thick] (q) -- ($(q)!0.70!(qend)$) node[pos=0.45,above=1pt] {$1$};
	\draw[dotted end] ($(q)!0.74!(qend)$) -- (qend);
	\draw[very thick] (y) -- ($(y)!0.70!(yend)$) node[pos=0.45,above=1pt] {$2$};
	\draw[dotted end] ($(y)!0.74!(yend)$) -- (yend);
	
	\draw[fill=black] (p) circle (2.2pt) node[above right] {$p$};
	\draw[fill=black] (q) circle (2.2pt) node[above left]  {$q$};
	\draw[fill=black] (y) circle (2.2pt) node[below left]  {$y$};
	
	\node[font=\small] at ($(m1)+(-0.5,0.2)$) {$2$};
	\node[font=\small] at ($(m2)+(-0.55,0)$)  {$1$};
	\node[font=\small] at ($(m3)+(-0.45,0)$)  {$-1$};
	
	\node[font=\small,anchor=west] at ($(m1)+(0.55,1.35)$) {$\graphf$};
	
\end{tikzpicture}
	\caption{The two-ended balanced graph, $\graphf$, of the function $f$ described in Example \ref{ex:increase_k_get_non-zero_slopes} on $S^1_a$ (with $a=l_1+l_2+l_3$). The $\R$-direction is horizontal. All bounded edges have weight $1$, while semi-infinite edges carry the weights $3$, $1$, $2$ of the divisor $\mathrm{div}(f)=3\cdot p-q-2\cdot y$.}
	\label{fig:two_ended_balanced_graph_full_example}
\end{figure}
\end{example}

\subsection{Construction of the rational equivalence}\label{section:constructing_W}

We now build a $1$-cycle $W\subset C^k\times\R$ which realises a rational equivalence $k!\cdot(p-q)^k\sim0$ in $C^k$. However, by construction this cycle will naturally be the closure of a cycle $W^{[0]}\subset(C^{[0]})^k\times\R$, and therefore the rational equivalence $k!\cdot(p-q)^k\sim0$ can be realised in $(C^{[0]})^k$. Note the boundary divisor of $C^k\times\R$ is given by $$\bigcup_{i=1}^kC\times\dots\times \underbrace{C_\infty}_{i^{th}}\times\dots\times C \times\R,$$ where $C_\infty=C\backslash C^{[0}$ are just the $1$-valent vertices of $C$. For each such $1$-valent vertex $x$ with corresponding endpoint $y$, and each of the $k$ components of the boundary divisor above, there is a neighbourhood of the form $$C\times\dots\times \underbrace{[x,y)}_{i^{th}}\times\dots\times C \times\R$$ of $C\times\dots\times\{x\}\times\dots\times C \times\R$ in which $W$ is of the form $$\{z_1\}\times\dots\times\underbrace{[x,y)}_{i^{th}}\times\dots\times\{z_k\}\times\{t\}$$ with $\{z_1,\dots,z_k\}$ points of $C$ and $t\in\R$. In particular, this is the closure of the corresponding piece of $W^{[0]}$ of the form $\{z_1\}\times\dots\times(x,y)\times\dots\times\{z_k\}\times\{t\}$.

  The starting point is the one-parameter family of points $A(t)=\{x_1(t),\dots,x_k(t)\}$ from the previous Section, from which we reuse notation.

Recall $W$ is defined by its fibre $W_t:=(\pi_{C^k})_*(W\cdot(C^k\times\{t\}))$ above any $t\in\R$:
\begin{align}
	W_t=\bigoplus_{\sigma\in S_k} (x_{\sigma(1)}(t)-q,\dots,x_{\sigma(k)}(t)-q) \subset C^k.
	\label{eq:fibres_of_W}
\end{align}

Observe that when $t<a$, then $A(t)=\{p,\dots,p\}$, and for $t>b$, then $x_i(t)=q$ for some $i\in\{1,\dots,k\}$. This means that  $$W_t=\begin{cases}
	k!\cdot(p-q)^k & \text{if $t\leq a$} \\
	0 & \text{if $t\geq b$}.
\end{cases}$$

In this Section, we study the underlying set $\vert W\vert$ of $W$ in $C^k\times\R$ and show:
\begin{proposition}\label{prop:W_tropical_structure}
	$\vert W \vert\subset C^k\times\R$ carries a natural structure of a tropical cycle, which we denote $W$.
\end{proposition}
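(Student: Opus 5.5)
We will describe $\vert W\vert$ explicitly as a finite union of segments and rays in $C^k\times\R$, assign integer weights, and then check the balancing condition at every vertex. Expanding \eqref{eq:fibres_of_W} multilinearly, $W_t$ is a signed sum over $\sigma\in S_k$ and subsets $I\subset\{1,\dots,k\}$ of the points $P_{\sigma,I}(t)$ whose $j$-th coordinate is $x_{\sigma(j)}(t)$ if $j\in I$ and $q$ otherwise, with sign $(-1)^{k-\vert I\vert}$. So the natural candidate for $W$ is the sum of the curves traced out by $t\mapsto(P_{\sigma,I}(t),t)$, with weight $(-1)^{k-\vert I\vert}$.

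\textbf{Step 1: the curves.} On each component $(t_{i},t_{i+1})$ of $\R\backslash\{t_0,\dots,t_r\}$, the points $x_j(t)$ move affinely along edges of $C$ with slope $\pm1/s$, where $s$ is the slope of $\varphi$, or stay still on a vertical edge of $\graphvarphi$. Hence $(P_{\sigma,I}(t),t)$ traces a segment in $C^k\times\R$. We must check that its direction is rational: the direction is $(\pm1/s_{j})_{j\in I}$ together with $1$ in the $\R$-coordinate, so the primitive vector is obtained after scaling by $\mathrm{lcm}(s_j)$. The weight is then rescaled by $\prod_{j\in I}$ of the multiplicities divided by that lcm. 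This is exactly where the multiplicities of $A(t)$ enter. For $t<a$ and $t>b$ the pieces cancel or become constant rays, as computed just before the statement.

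\textbf{Step 2: weights.} We sum the contributions over all $(\sigma,I)$ landing on the same segment and keep the nonzero total as the weight. Each repeated point in $A(t)$ is counted via its multiplicity, so one must check that the weighted sum reproduces a multiplicity-$k!$-compatible count. I would first treat each edge of $\graphvarphi$ carrying multiplicity $m$ as $m$ copies of a single point, and use the weight of the lifted segment in $C\times\R$ for $\graphvarphi$ (lattice length considerations give weight $1$ per copy along direction $(1,s)$).

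\textbf{Step 3: balancing.} Vertices of $\vert W\vert$ lie over the $t_i$, plus the points where some coordinate $x_j(t)$ crosses a vertex of $C$ (these also occur at some $t_i$). Balancing at a vertex $(P,t_i)$ reduces coordinatewise to balancing of $\graphvarphi$. The key is a product structure: locally $W$ looks like the set of tuples whose coordinates each move along $\graphvarphi$ over a common $t$, which is the fibre product over $\R$ of copies of $\graphvarphi$, or of $\{q\}\times\R$, intersected with the diagonal in $\R^k$. I would therefore identify $W$ locally with $(\pi_{C^k\times\R})_*$ of the intersection of $\prod_j G_j\subset (C\times\R)^k$ with the small diagonal $\Delta_{\R^k}$, where $G_j\in\{\graphvarphi,\ \{q\}\times\R\}$. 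Since products of tropical cycles, intersections with the diagonal in $\R^k$ (which is smooth) via \cite{allermann_intersection_products_smooth_varieties}, and pushforwards are all tropical cycles, balancing follows without case analysis. Symmetrising over $S_k$ and taking the alternating sum over $I$ then gives $W$ globally as a tropical cycle, whose fibres agree with \eqref{eq:fibres_of_W} by the same projection-formula argument as in the proof of Theorem \ref{thm:tropical_voevodsky}. One caveat is that this computes $(\graphvarphi)^k$ fibred over $\R$, whose fibres are $A(t)^k$ with multiplicities and therefore agree with the ordered $S_k$-sum up to the factor accounting for repetitions. This needs a check.

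\textbf{Main obstacle.} The main obstacle is the constant leaf segments near the $1$-valent vertices. There $\graphvarphi$ has horizontal null edges, so the fibred product over $\R$ is not transverse, and the diagonal intersection acquires excess dimension: whole segments $\{z\}\times[x,y)\times\{t\}$ appear. I would first try to handle this by intersecting with $\Delta_{\R^k}$ only after replacing $\varphi$ near the leaves by a stable-intersection perturbation. Failing that, I would add the horizontal edges described before the statement manually, with weights chosen to restore the balancing defect at the boundary points $(y,\varphi(y))$, which are trivalent, so that the defect can be computed explicitly.
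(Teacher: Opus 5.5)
\textbf{The gap is in Step 3.} Your Steps 1--2 essentially reproduce the paper's segments and weights $w(E)=\mu(E)/c(E)$. Step 3, however, does not prove that $W$ is balanced: it identifies $W$ with (a symmetrised, alternating sum of) fibre products over $\R$, that identification is false, and the ``caveat'' you flag is exactly where it fails.

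The fibre of $\graphvarphi\times_\R\cdots\times_\R\graphvarphi$ over $t$ is $[A(t)]^{\times k}$, where $[A(t)]:=\sum_i[x_i(t)]$. It therefore contains every tuple that reuses one point of $A(t)$ in several coordinates. By contrast, $W_t$ in \eqref{eq:fibres_of_W} only involves injective labellings $j\mapsto x_{\sigma(j)}(t)$ of the $k$ strands. The discrepancy lives on partial diagonals and is not a scalar factor:
\begin{itemize}
\item[(a)] For $k=2$ and $A(t)=\{x_1(t),x_2(t)\}$ with $x_1(t)\neq x_2(t)$, the fibre product contains the edge $\{(x_1(t),x_1(t),t)\}$, which is not in $\vert W\vert$.
\item[(b)] Globally, your alternating sum over $G_j\in\{\graphvarphi,\{q\}\times\R\}$ has fibres $([A(t)]-[q])^{\times k}$. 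That is $(k[p]-[q])^{\times k}$ for $t\ll0$ and $[D]^{\times k}$ for $t\gg0$, instead of $k!(p-q)^k$ and $0$; symmetrising over $S_k$ only multiplies this by $k!$.
\end{itemize}
So Step 3 establishes balancing for a different cycle.

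The same slip appears in Step 1. A tuple with $n_j$ coordinates at a point of multiplicity $m_j$ is counted $(k-k_0)!\prod_j m_j!/(m_j-n_j)!$ times (Lemma \ref{lemma:closed_formula_multiplicity}), not as a product of multiplicities. Localising does not rescue the argument either. $W$ agrees with a multiple of a local fibre product only near vertices where no two coordinates are carried by strands through the same point of $\graphvarphi$. The vertices where they are carried by the same point are exactly where balancing has content: those over $t_0=a$, those over $t_i=\varphi(q)$, and branch points of $\graphvarphi$ hit by several coordinates.

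\textbf{What is missing, and how the paper handles it.} You need a way to pass from fibre products to injective labellings. Möbius inversion on the lattice of set partitions does this: for $I\subseteq\{1,\dots,k\}$,
\[ \sum_{f:I\hookrightarrow\{1,\dots,k\}}\ \prod_{j\in I}[x_{f(j)}(t)]=\sum_{\rho}\mu(\hat{0},\rho)\,(\Delta_\rho)_*\big([A(t)]^{\times\rho}\big),\qquad \mu(\hat{0},\rho)=\prod_{B\in\rho}(-1)^{\vert B\vert-1}(\vert B\vert-1)!. \]
Here $\rho$ runs over set partitions of $I$, there is one factor $[A(t)]$ per block, and $\Delta_\rho:C^{\rho}\to C^{I}$ is the partial diagonal. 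The fibres of $W$ decompose as
\[ W_t=\sum_{I}(-1)^{k-\vert I\vert}(k-\vert I\vert)!\,\Big(\sum_{f:I\hookrightarrow\{1,\dots,k\}}\prod_{j\in I}[x_{f(j)}(t)]\Big)\times[q]^{\{1,\dots,k\}\setminus I}. \]
Substituting the inversion formula therefore gives a tropical $1$-cycle with the fibres \eqref{eq:fibres_of_W}. It is a signed combination of pushforwards of fibre products of copies of $\graphvarphi$, times $q$-factors. With this ingredient your route would become a genuine, structural alternative to the paper's.

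Those fibre products need no transversality. Cutting by $C^k\times\Delta_{\R^k}$ amounts to taking iterated divisors of the pulled-back functions $\max(t_i,t_{i+1})$. These are always defined and produce the horizontal leaf edges themselves, so your ``main obstacle'' is not really one.

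The paper never forms fibre products. It counts signed injective labellings directly and checks balancing vertex by vertex. The cancellations at coinciding coordinates come from $\sum_l(-1)^l\binom{s}{l}=0$. The remaining directions reduce to balancing of $C$ at $v_j$, once $N_e$ is shown to be proportional to $s_e$.

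As written, your proposal contains neither mechanism, so balancing of $\vert W\vert$ at vertices with coinciding coordinates is not established.
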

Together with the observation above, this implies that $W$ witnesses a rational equivalence $k\cdot(p-q)^k\sim0$, proving Proposition \ref{prop:tropical_voevodsky_curves}.

\subsubsection{Edges of $W $}

To write $\vert W\vert$ as a union of linear segments, recall from the previous Section that as long as $t$ varies along a connected component of $\R\backslash\{t_1,\dots,t_r\}$, each $x_i(t)$ appears in $A(t)$ with constant multiplicity, and simply varies along a given segment of $\graphvarphi$. Consequently, corresponding points of $W$ are also interior edge points, and we will see the $t_i$'s will also be precisely the $\R$-coordinates of higher-valency vertex points of $W$.

For every $\sigma\in S_k$, the corresponding summand in Equation \eqref{eq:fibres_of_W} expands into $2^k$ terms which we will label by $\{i_1,\dots,i_k\}\in\{0,1\}^k$. We introduce the notation
$$z_{(i_1,\dots,i_k)}(t):=(z^{i_1}_1(t),\dots,z^{i_{k}}_k(t))\in C^k,$$ where $z_i^0(t)=x_i(t)$, and $z^1_i(t)=q$. These correspond to the $C^k$-components of points of $W$. 

The first edges of $\vert W\vert$ we describe are the semi-infinite edges with $\R$-coefficients in $(-\infty;a]$, which are of the form
\begin{align}
	N_{\{i_1,\dots,i_k\},\sigma}:=\sigma(z_{(i_1,\dots,i_k)})\times(-\infty,a]\subset C^k\times\R, \notag
\end{align}
Recall that $A(t)=\{p,\dots,p\}$ for $t<a$, hence $z_i^0=p$ and $z_i^1=q$ for all $i\in\{1,\dots,k\}$. There are $k!\cdot2^k$ such edges in total, with the $k!$-factor coming from the different $\sigma\in S_k$, and $2^k$ from the binomial expansion for a given $\sigma$. For any fixed $\sigma\in S_k$, there are $\binom{k}{k_0}$ (distinct) edges $N_{\{i_1,\dots,i_k\},\sigma}$ for which $0$ appears $k_0$ times in the tuple $\{i_1,\dots,i_k\}$. This means that there are $k!\cdot\binom{k}{k_0}$ such edges in total, with $\binom{k}{k_0}$ \textit{distinct} ones, each appearing with multiplicity $k!$. When considering the cycle structure on $\vert W \vert$, each of these segments will be assigned a sign $(-1)^{(k-k_0)}$.

Similarly, semi-infinite edges of $\vert W\vert$ are those with $\R$-values in $[b;+\infty)$, of the form
\begin{align}
	P_{\{i_1,\dots,i_k\},\sigma}:=\sigma(z_{(i_1,\dots,i_k)})\times[b;+\infty)\subset C^k\times\R. \notag
\end{align}
Recall that for $t> b$, $A(t)$ is, up to permutation, $\{q, d_1,\dots,d_{k-1}\}$, where $D=d_1+\dots+d_{k-1}$ is the effective divisor appearing in the rational equivalence $k\cdot p-q-D=\mathrm{div}(\varphi)$. Therefore we can take $z_1^0=q$ and $z_i^0=d_{i-1}$ for $i\in\{2,\dots,k\}$. The number of \textit{different} such edges, and the multiplicities with which they appear, depend on the combinatorics of $\{q, d_1,\dots,d_{k-1}\}$. Again, we can anticipate assigning to each segment indexed by a tuple $\{i_1,\dots,i_k\}$ in which $0$
 appears $k_0$ times a sign $(-1)^{k-k_0}$, which will become relevant when considering the cycle structure on $W $.

Edges of $\vert W\vert$ with $\R$-coordinates in $[t_{i},t_{i+1}]$ are of the form
\begin{align}
	F_{\{i_1,\dots,i_k\},\sigma}^{[t_i,t_{i+1}]}:=\{\sigma(z_{(i_1,\dots,i_k)}(t),t)\}_{t\in[t_i,t_{i+1}]}\subset C^k\times\R \notag
\end{align}
for any $\{i_1,\dots,i_k\}\subset\{0,1\}^k$, where $z_i^0(t)=x_i(t)$ and $z_i^1(t)=q$. 

Although the edges described above are the only ones whose points explicitly appear in Equation \eqref{eq:fibres_of_W}, there would be no chance for $\vert W\vert$ to be balanced as such. To see this, notice that boundary points $y$ for $\varphi$ appear with non-zero multiplicity in $A(t)$ for $t\geq\varphi(y)$ or $t\leq\varphi(y)$, depending on the sign of the non-zero coefficient of $y$ in $\mathrm{div}(\varphi)$. However, points in the leaf $[x,y)$, where $x$ is the $1$-valent vertex of $C$ adjacent to $y$, do not appear in $A(t)$ for any $t\in\R$. Edges corresponding to these leaves need to manually be inserted in $\vert W\vert$ for balancing to be possible. These do not feature in Equation \eqref{eq:fibres_of_W} because they are contained in $C^k\times\{\varphi(y)\}$, making them invisible to the divisor in $W_{\varphi(y)}:=(\pi_{C^k})_*(\mathrm{div}(\pi^*_{C^k\times\R}\max(t,\varphi(y)))\vert_W)$ (see \cite[Construction 3.3, Definition 3.4]{tropical_intersection_theory_allerman_rau}).

To construct these additional edges for $\vert W\vert$, let $m(y)$ be the multiplicity with which $y$ appears in $A(\varphi(y))$. This means that a point $\tilde{v}$ in $W_{\varphi(y)}$ can have up to $m(y)$ copies of $y$ as $C$-coordinates. For a fixed $\tilde{v}\in W_{\varphi(y)}$, we denote by $\vert y\vert$ this number. If $\vert y\vert\geq1$, we manually insert $\vert y\vert$ adjacent edges to $\tilde{v}$: one for each of its $y$-coordinates. Because $\tilde{v}$ is of the form $\sigma(z_{(i_1,\dots,i_k)}(\varphi(y),\varphi(y)))$ for some $\sigma\in S_k$ and $\{i_1,\dots,i_k\}\in\{0,1\}^k$, each $y$-coordinate corresponds to an index $j$ such that $i_{\sigma(j)}=0$ and $x_{\sigma(j)}(\varphi(y))=y$. Then the corresponding adjacent edge we add has the form:
\begin{align}
	L_{\{i_1,\dots,i_k\},\sigma,j}:= \sigma\left(z^{i_1}_{1}(\varphi(y))\times\dots\times \underbrace{[x,y]}_{j^{th}}\times\dots\times z^{i_k}_k(\varphi(y))\right)\times\{\varphi(y)\}. \notag 
\end{align}

It is important to note that the multi-index $(\{i_1,\dots,i_k\},\sigma,j)$ does \textit{not} mean we associate such an edge to \textit{each} $\{i_1,\dots,i_k\}\in\{0,1\}^k$, but only to those $2^{k-m(y)}\cdot(2^{m(y)}-1)$ tuples for which there is at least one $y$-coordinate in $z_{(i_1,\dots,i_k)}(\varphi(y))$. Let these be indexed by $\Theta\subset\{0,1\}^k$. Furthermore, the range of the index $j$ depends on the chosen tuple in $\Theta$, namely it ranges from $1$ to the number $\vert y\vert$ of $y$-coordinates of $z_{(i_1,\dots,i_k)}(\varphi(y))$. To avoid cumbersome notation, for any endpoint $y$ of $\varphi$, we simply write 
\begin{align}
	L(y):=\bigcup_{\substack{\{i_1,\dots,i_k\}\in\Theta\\ \sigma\in S_k\\ j\in\{1,\dots,\vert y\vert\}}} L_{\{i_1,\dots,i_k\},\sigma,j}
	\subset C^k\times\{\varphi(y)\}
	\notag
\end{align}
for the union over all such \enquote{admissible} indices $\{i_1,\dots,i_k\},\sigma,j$.

With this notation, we can write the decomposition of $\vert W\vert$ into linear segments as follows: 
\begin{align}
	\vert W\vert=
	\bigcup_{\substack{\{i_1,\dots,i_k\}\in\{0,1\}^k\\ \sigma\in S_k}} 
	\left(
	N_{\{i_1,\dots,i_k\},\sigma}
	\cup 
	\left(\bigcup_{l=0}^{r-1}
	F_{\{i_1,\dots,i_k\},\sigma}^{[t_i,t_{i+1}]}
	\right)
	\cup
	P_{\{i_1,\dots,i_k\},\sigma}
	\right)
	\bigcup_{\substack{y \hspace{0.5mm}\mathrm{endpoint}\\ \mathrm{for}\hspace{0.5mm}\varphi}} L(y).
	\label{setwise_rat_equiv}
\end{align}
Notice each segment is rational in $C^k\times\R$ with respect to the tropical structure on $C$ and the standard tropical structure $\Z\subset\R$ on $\R$, because $\varphi$ has integer slopes.

\subsubsection{Weights of $W$}\label{section:weights_of_W}

Equation \eqref{setwise_rat_equiv} describes the set $\vert W\vert$. To equip it with the structure of a tropical cycle, we must assign an integer weight to each of the segments $N_{\{i_1,\dots,i_k\},\sigma}$, $F_{\{i_1,\dots,i_k\},\sigma}$, $P_{\{i_1,\dots,i_k\},\sigma}$, and $L(y)$.
It is important to distinguish the weight of an edge with its \textit{multiplicity}, which just records its (signed) occurences in $\vert W\vert$.

\begin{definition}[Multiplicity]\label{def:multiplicity_of_segment}
	Let $E$ be one of the segments of Equation \eqref{setwise_rat_equiv} of type $N_{\{i_1,\dots,i_k\},\sigma}$, $F_{\{i_1,\dots,i_k\},\sigma}$ or $P_{\{i_1,\dots,i_k\},\sigma}$, and $t\in\mathrm{int}(\pi_\R(E))\in\R$ such that $E_t:=E\cap(C^k\times\{t\})$ is a single point. 
	We define the \emph{multiplicity} $\mu(E)$ of $E$ as
	$$\mu(E):=\sum_{\substack{(\sigma,(i_1,\dots,i_k))\in S_k\times\{0,1\}^k \\ \sigma(z_1^{i_1}(t),\dots,z_k^{i_k}(t),t)=E_t}}(-1)^{k-k_0}\in\Z,$$
	where as before $k_0$ denotes the number of zeroes in $(i_1,\dots,i_k)$.
\end{definition}

\begin{remark}\label{rmk:sign_factors_out}
	These multiplicities behave qualitatively differently for $t<\varphi(q)$ or $t>\varphi(q)$. Because $q$ is in the support of $\mathrm{div}(\varphi)$ with coefficient $\leq-1$, $\graphvarphi$ has a semi-infinite edge $\{q\}\times[\varphi(q);+\infty)$ of weight $\geq1$. This means that $q\in A(t)$ if and only if $t\geq\varphi(q)$. Notice that, as soon as $x_i(t)=q$ for some $i$, then $$(x_{\sigma(1)}(t)-q,\dots,x_{\sigma(k)}(t)-q)=0,$$ so $\mu(E)=0$ for \textit{any} edge with $\R$-coordinates $t\geq\varphi(q)$. On the other hand, if $t<\varphi(q)$, then a coordinate of $E_t$ equal to $q$ can only arise from $i_j=1$, so all pairs $(\sigma,(i_1,\dots,i_k))$ contributing to $\mu(E)$ share the same tuple $(i_1,\dots,i_k)$ -- namely the one with $i_j=1$ exactly at the $q$-coordinates of $E_t$ -- and in particular the same sign $(-1)^{k-k_0}$, which may thus be be factored out of the sum in Definition \ref{def:multiplicity_of_segment} .
\end{remark}

\begin{lemma}[Formula for $\mu(E)$]\label{lemma:closed_formula_multiplicity}
	Let $E$ and $t$ be as in Definition \ref{def:multiplicity_of_segment}, and assume $q\notin A(t)$ (i.e. $t<\varphi(q)$). Denote by $v_1,\dots,v_l\in C$ the distinct values taken by the $k_0$ coordinates of $E_t$ different from $q$, by $n_j$ the number of coordinates of $E_t$ equal to $v_j$, and by $m_j$ the multiplicity of $v_j$ in $A(t)$ (so $n_j\leq m_j$ and $n_1+\dots+n_l=k_0$). Then $$\mu(E)=(-1)^{k-k_0}\cdot(k-k_0)!\cdot\prod_{j=1}^{l}\frac{m_j!}{(m_j-n_j)!}.$$
\end{lemma}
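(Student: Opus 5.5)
We prove the formula by a direct count of the pairs $(\sigma,(i_1,\dots,i_k))\in S_k\times\{0,1\}^k$ contributing to $\mu(E)$, which reduces the problem to counting permutations. Write $E_t=(e_1,\dots,e_k)\in C^k$, and let $J_q\subset\{1,\dots,k\}$ be the set of coordinates with $e_j=q$, so $|J_q|=k-k_0$. By Remark \ref{rmk:sign_factors_out}, since $q\notin A(t)$, every contributing pair has the same tuple $(i_1,\dots,i_k)$, after re-indexing through $\sigma$: the entries equal to $1$ sit exactly at the positions of $J_q$. The sign $(-1)^{k-k_0}$ therefore factors out, and it remains to show that the number $N$ of contributing pairs equals $(k-k_0)!\cdot\prod_{j}m_j!/(m_j-n_j)!$.

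To count, we unwind the condition $\sigma(z_1^{i_1}(t),\dots,z_k^{i_k}(t))=E_t$. This says that the position $j$ of $E_t$ is filled by the label $\sigma(j)$ (or $\sigma^{-1}(j)$, depending on the convention; the count is the same), where each label $a\in\{1,\dots,k\}$ carries either $x_a(t)$ if $i_a=0$, or $q$ if $i_a=1$. Fixing the labels $x_1(t),\dots,x_k(t)$ of $A(t)$, a contributing pair is the same datum as a bijection from positions to labels such that:
\begin{itemize}
\item each position $j$ with $e_j=v_s$ receives a label $a$ with $x_a(t)=v_s$ and $i_a=0$;
\item each position in $J_q$ receives a label with $i_a=1$, where any label is allowed since its value becomes $q$ regardless of $x_a(t)$.
\end{itemize}
The tuple $(i_a)$ is then determined by $\sigma$, so we only need to count $\sigma$.

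We choose the labels position-block by block. The $n_s$ positions equal to $v_s$ must receive, injectively and in order, distinct labels among the $m_s$ labels $a$ with $x_a(t)=v_s$; there are $m_s!/(m_s-n_s)!$ such assignments. These choices are independent across $s$ because the label sets for distinct $v_s$ are disjoint. The remaining $k-k_0$ labels, whatever their $x$-values, are then assigned bijectively to the $k-k_0$ positions of $J_q$, in $(k-k_0)!$ ways. Multiplying the block counts gives $N$, and hence the stated formula.

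The main obstacle is the bookkeeping needed to confirm that the remaining labels may be placed freely on $J_q$ while still counting distinct pairs. We need that no label sent to $J_q$ is forced to have $i_a=0$, and that no position with $e_j=v_s$ could instead be produced by some $i_a=1$. The first holds because every label sent to $J_q$ is given $i_a=1$. The second holds because an $i_a=1$ label produces the value $q\neq v_s$.

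We also need that distinct $\sigma$ give distinct pairs even when several labels have the same value $x_a(t)$. This is where the convention of Lemma \ref{lemma:A(t)_well_def} matters: points of $A(t)$ are counted with multiplicity via distinct indices $a$. So permutations among equal-valued labels are genuinely different summands, which is exactly why the falling factorials $m_s!/(m_s-n_s)!$ appear rather than binomial coefficients.
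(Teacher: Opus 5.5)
Your proposal is correct and is the same direct count as the paper's proof. Because $q\notin A(t)$, each $\sigma$ forces the tuple: labels sent to $q$-slots have $i_a=1$, and labels sent to $v_j$-slots have $i_a=0$ and $x_a(t)=v_j$. So the sign is uniformly $(-1)^{k-k_0}$, and the count is $\prod_j m_j!/(m_j-n_j)!$ injective matchings of the $v_j$-slots times $(k-k_0)!$ placements of the leftover labels on the $q$-slots. Your remark that the tuple is constant only after re-indexing through $\sigma$ is a slightly more careful phrasing than Remark~\ref{rmk:sign_factors_out}, which says all contributing pairs share the same tuple.
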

\begin{proof}
	Since no $x_i(t)$ equals $q$, the tuple $(i_1,\dots,i_k)$ is determined by the $k-k_0$ coordinates of $E_t$ equal to $q$.  Then $\sigma$ matches the $n_j$ coordinates holding the value $v_j$ with $n_j$ \emph{distinct} indices among the $m_j$ indices $i$ with $x_i(t)=v_j$: there are $m_j!/(m_j-n_j)!$ ways of doing so for each $j$, all independent. The remaining $k-k_0$ indices may be distributed freely over the $q$-coordinates, in $(k-k_0)!$ ways.
\end{proof}

\begin{example}
	Applying this formula to a positive edge from Example \ref{example_1} with $k=k_0$, one gets points of the form $\sigma(x_1(t),x_2(t),x_3(t),x_3(t),x_3(t),x_4(t),x_4(t),t)$. Because $k=k_0$, $n_j=m_j$ for every $j$, and the multiplicity is $3!\cdot 2!=12$, obtained by different permutations of the $x_3(t)$'s and $x_4(t)$'s. Now the edge with points of the form $\sigma(q,x_2(t),x_3(t),q,q,x_4(t),x_4(t))$, for which $k_0=k-3$, has multiplicity $-3!\cdot\frac{3!}{2!}\cdot 2!=36$. The $3!$-term corresponds to permuting the $q$-factors, the $\frac{3!}{2!}$-term corresponds to the three copies of $x_3(t)$ one can \enquote{choose from} in $A(t)$ (i.e. the three different indices $j_1,j_2,j_3$ for which $i_j=0$ contributes an $x_3(t)$-coordinate), and the $2!$-factor corresponds to permuting the $x_4(t)$-factors. 
\end{example}

We now want to assign to every edge $E$ in $W$ a \textit{weight} $w(E)$; while the multiplicity $\mu(E)$ might be a natural candidate, Proposition \ref{prop:weights_well_defined} will demonstrate this is too naive. Recall $W$ is constructed so that its fibres $W_t:=(\pi_{C^k})_*(W\cdot \mathrm{div}(\pi_\R^*{\max(x,t)}))$ are as in Equation \eqref{eq:fibres_of_W}. The subtlety comes from how the weights of the Weil divisor $W\cdot \mathrm{div}(\pi_\R^*{\max(x,t)})$ associated to the Cartier divisor $\mathrm{div}(\pi_\R^*{\max(x,t)})$ are defined, following \cite[Construction 3.3, Definition 3.4]{tropical_intersection_theory_allerman_rau}.
Namely, they only inherit the weight of the corresponding edge in $W$ up to multiplication by the $\R$-coordinate (which is $\Z$-valued) of the primitive tangent vector to $E$:

\begin{definition}[Primitive tangent vector to $E$]
	Let $E$ and $t$ be as in Definition \ref{def:multiplicity_of_segment}, and orient $E$ so that the $\R$-coordinate is increasing. Let $u_j$ denote the primitive tangent vector, in the direction of increasing $t$, to the segment of $C$ on which the $j^{th}$ coordinate of the point $E_t\in C^k$ lies. Notice that $u_j=0$ in exactly two settings: if $i_j=1$, or if $i_j=0$ and the corresponding $x_i(t)$ comes from a semi-infinite edge of $\graphvarphi$. If $u_j\neq0$, we denote by $s_j$ the slope of $\varphi$ on the segment corresponding segment of $C$. Otherwise, we set $s_j=1$.
	Setting $c(E):=\operatorname{lcm}(\vert s_j\vert)$, the \textit{primitive tangent vector} of $E$ is $$u_E=\left(\frac{c(E)\cdot u_1}{\vert s_1\vert},\dots,\frac{c(E)\cdot u_k}{\vert s_k\vert},c(E)\right)\in\Lambda_C\times\dots\times\Lambda_C\times\Z,$$ where $\Lambda_C$ denotes the (local) lattice determining the tropical structure on $C$. 
	\label{def:primitive_vector}
\end{definition}

\begin{definition}[Weights for edges $N_{\{i_1,\dots,i_k\},\sigma}$, $F_{\{i_1,\dots,i_k\},\sigma}$ and $P_{\{i_1,\dots,i_k\},\sigma}$]\label{def:weights_of_W_1}
	The \emph{weight} of a segment $E$ of $\vert W\vert$ of type $N_{\{i_1,\dots,i_k\},\sigma}$, $F_{\{i_1,\dots,i_k\},\sigma}$ or $P_{\{i_1,\dots,i_k\},\sigma}$ 
	is $$w(E):=\frac{\mu(E)}{c(E)}.$$
\end{definition}
	
\begin{remark}
	Notice these are integers: by Lemma \ref{lemma:closed_formula_multiplicity}, $\mu(E)$ is divisible by $\prod_j \frac{m_j!}{(m_j-n_j)!}$, where $j$ ranges over the non-constant coordinate values of $E$, so that each factor is divisible by $m_j$. Because these non-constant values are interior edge points of $C$, $m_j=\vert s_j\vert$. Hence $\mu(E)$ is divisible by $\prod_j\vert s_j\vert$, which is in turn divisible by $c(E)=\operatorname{lcm}_j\{\vert s_j \vert\}$.
\end{remark}
	
	It remains to define weights for edges of $\vert W\vert$ of the form $L(y)$, where $y\in C$ is an endpoint for $\varphi$. In this case, we cannot infer these from our requirement that the fibres $W_t$ satisfy Equation \eqref{eq:fibres_of_W}. This is because $\pi_\R^*{\max(x,t)}$ is constant on these edges, hence they are automatically assigned zero coefficient in $\mathrm{div}(\pi_\R^*{\max(x,t)})$ (consistently with the fact that the corresponding points have multiplicity zero in $A(t)$). However, as the example below demonstrates, their weights are prescribed by the requirement that $\vert W\vert$ is balanced:

\begin{example}[Balancing using $L(y)$]\label{example:weights_of_leaf_edges}
	Let $e_y$ be a segment of $C$ adjacent to a $1$-valent vertex on which $\varphi$ is constant equal to $t_i$, and let $y$ be its $2$-valent endpoint. Suppose $\varphi$ has slope $s$ on the interior edge adjacent to $y$, so that the coefficient of $y$ in $\mathrm{div}(\varphi)$ is $-s$, and $\graphvarphi$ contains the vertical edge $\{y\}\times[t_i,+\infty)$ with weight $s$ (Figure \ref{fig:leaf_edge_local_structure}). Assuming $k=s$, $A(t)$ consists of a point $x(t)$ of multiplicity $s$ ascending to $y$ for $t<t_i$, and of the constant point $y$ with multiplicity $s$ for $t\geq t_i$. Let $u$ be the primitive direction at $y$ pointing into the interior edge, so that $-u$ points into $e_y$.
	
	The vertex $\tilde{v}=(y,\dots,y,t_i)$ of $W$ has two monotone adjacent edges. The first is the negative edge $E^-=\{(x(t),\dots,x(t),t)\}_{t<t_i}$ with $\mu(E^-)=s!$, and outgoing primitive vector $(u,\dots,u,-s)$ hence weight $w(E^-)=\frac{s!}{s}=(s-1)!$. The second is the positive edge $E^+=\{(y,\dots,y,t)\}_{t>t_i}$ with $\mu(E^+)=s!$, outgoing primitive vector $(0,\dots,0,1)$ hence weight $w(E^+)=s!$. Only these two edges contribute to balancing in the $\R$-direction, and their contributions $(s-1)!\cdot -s +s!\cdot1=0$ already cancel. For balancing along the $C^s$-direction, $E^+$ does not contribute, so the cancellation must occur between $E^-$ and $L(y)$. 
	Recall that by construction,
	$$L(y):=\bigcup_{j=1}^{s}L(y)_j,\qquad L(y)_j:=\{(y,\dots,\underbrace{z}_{j^{th}},\dots,y,t_i)\,:\,z\in e_y\}\subset C^s\times\{t_i\},$$
	so the primitive vectors for each of the $L(y)_j$ are $(0,\dots,\underbrace{-u}_{j^{th}},\dots,0,0).$
	
	$E^-$ contributes $(s-1)!\cdot u$ in each of the $C$-factors, hence for the balancing condition to hold in the $j^{th}$ factor of $C^s$, $L(y)_j$ must carry weight $$w(L(y)_j)=(s-1)!.$$
	\begin{figure}[h!]
		\centering
		\begin{tikzpicture}[scale=0.9, >=stealth]
			\draw[<->, gray!70] (-0.5,0.9) -- (9,0.9) node[right] {$C$};
			\draw[->, gray!70] (0,0.5) -- (0,5) node[above] {$\R$};
			\draw[dashed, gray!50] (0.3,3.8) -- (8.5,3.8) node[right, black] {\small $t>t_i$};
			\draw[dashed, gray!50] (0.3,3.0) -- (8.5,3.0) node[right, black] {\small $t=t_i$};
			\draw[dashed, gray!50] (0.3,2.0) -- (8.5,2.0) node[right, black] {\small $t<t_i$};
			\draw[thick, blue] (4,3) -- (2.5,1);
			\node[below right] at (3.1,1.9) {\small slope $s$};
			\draw[thick, blue, ->] (4,3) -- (8.3,3);
			\node[above] at (6.6,3.02) {\small $e_y\times\{t_i\}$};
			\draw[thick, blue, ->] (4,3) -- (4,4.7);
			\node[left] at (3.95,4.2) {\small weight $s$};
			\filldraw[blue] (4,3) circle (2.2pt);
			\node[below] at (4.2,2.9) {\small $(y,t_i)$};
			\filldraw (3.25,2) circle (1.8pt);
			\node[above left] at (3.25,1.9) {\small $(x(t),t)$};
			\filldraw (4,3.8) circle (1.8pt);
			\node[right] at (4.1,3.9) {\small $(y,t)$};
		\end{tikzpicture}
		\caption{The local structure of $\graphvarphi$ near $(y,t_i)$, where $\varphi\equiv t_i$ on the segment $e_y\subset C$, and has slope $s$ on the interior edge adjacent to $y$. }
		\label{fig:leaf_edge_local_structure}
	\end{figure}
\end{example}

Otherwise reusing the notation above, let us generalise this example to the setting where $k\geq s$. This means $L(y)$ contains edges adjacent to \textit{multiple} vertices $\tilde{v}:=(v_1,\dots,v_k,t_i)$ of $W$. Recall these edges are indexed by pairs $(\tilde{v},j)$, where $\tilde{v}$ labels the vertex of $W$ to which it is adjacent, and $j$ is a coordinate index of $\tilde{v}$ for which $v_j=y$. The corresponding edge is $$L(y)_{\tilde{v},j}=\big\{(v_1,\dots,v_{j-1},\underbrace{z}_{j^{th}},v_{j+1},\dots,v_k,t_i)\,:\,z\in e_y\big\}\subset C^k\times\{t_i\},$$ with outgoing primitive vector $(0,\dots,\underbrace{-u}_{j^{th}},\dots,0,0)$.

\begin{definition}[Weights for edges $L(y)$]\label{def:weights_of_W_2}
		We define the \emph{multiplicity} of $L(y)_{\tilde{v},j}$ as $$\mu(L(y)_{\tilde{v},j}):=\sum_{E^-}\mu(E^-),$$ where the sum ranges over the negative edges of $W$ adjacent to $\tilde{v}$. Equivalently, $\mu(L(y)_{\tilde{v},j})$ is the signed number of pairs $(\sigma,(i_1,\dots,i_k))$ whose corresponding point converges to $\tilde{v}$ as $t\rightarrow t_i^-$. 
		We set its \emph{weight} to be $$w(L(y)_{\tilde{v},j}):=\frac{\mu(L(y)_{\tilde{v},j})}{s}.$$
\end{definition}

\begin{remark}
	Again, whenever $t_i>\varphi(q)$, $\mu(L(y)_{\tilde{v},j})=w(L(y)_{\tilde{v},j})=0$ for any $(\tilde{v},j)$. 
\end{remark}

\begin{lemma}[Formula for $w(L_{\tilde{v},j})$]
	Assume $t_i<\varphi(q)$. Denote by $\vert y\vert\geq1$ the number of coordinates of $\tilde{v}$ equal to $y$, by $k-k_0$ the number of coordinates equal to $q$, and by $v_1,\dots,v_l$ the distinct values of the remaining coordinates, appearing $n_1,\dots,n_l$ times respectively and whose multiplicities in $A(t)$ are $m_1,\dots,m_l$. Then  $$w(L(y)_{\tilde{v},j})=\frac{(-1)^{k-k_0}}{s}\cdot\frac{s!}{(s-\vert y\vert)!}\cdot(k-k_0)!\cdot\prod_{i=1}^{l}\frac{m_i!}{(m_i-n_i)!}\in\Z.$$
\end{lemma}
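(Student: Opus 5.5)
By Definition \ref{def:weights_of_W_2}, $w(L(y)_{\tilde{v},j})=\mu(L(y)_{\tilde{v},j})/s$, and $\mu(L(y)_{\tilde{v},j})$ is the signed number of pairs $(\sigma,(i_1,\dots,i_k))$ whose point $\sigma(z_{(i_1,\dots,i_k)}(t),t)$ converges to $\tilde{v}$ as $t\to t_i^-$. So the plan is to count these pairs, in the same way as in the proof of Lemma \ref{lemma:closed_formula_multiplicity}, and then check that $s$ divides the count.

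\textbf{Step 1: the sign.} Since $t_i<\varphi(q)$, for $t$ slightly below $t_i$ we have $q\notin A(t)$. As in Remark \ref{rmk:sign_factors_out}, a coordinate of a limiting point equals $q$ only when the corresponding $i_j=1$. So every contributing pair has its tuple $(i_1,\dots,i_k)$ fixed by the $k-k_0$ coordinates of $\tilde{v}$ equal to $q$. The sign $(-1)^{k-k_0}$ is therefore common to all pairs and factors out.

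\textbf{Step 2: the count.} Fix $t\in(t_i-\epsilon,t_i)$. The coordinate value $y$ of $\tilde{v}$ is the limit, as $t\to t_i^-$, of the points $x_i(t)$ approaching $y$ along the interior edge. There are exactly $s$ such indices, since the edge of $\graphvarphi$ over the interior segment adjacent to $y$ has slope $s$, so these points appear with multiplicity $s$ in $A(t)$. This is exactly the configuration of Example \ref{example:weights_of_leaf_edges}; by Lemma \ref{lemma:A(t)_well_def}, no other points of $A(t)$ converge to $y$.

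The coordinates $v_1,\dots,v_l$ stay away from $y$ and from the vertex at $t_i$, so their multiplicities $m_1,\dots,m_l$ are unchanged near $t_i$. The count of $\sigma$ then splits into independent choices:
\begin{itemize}
\item assign the $\vert y\vert$ coordinates equal to $y$ to distinct indices among the $s$ points converging to $y$, in $s!/(s-\vert y\vert)!$ ways;
\item assign the $n_i$ coordinates equal to $v_i$ to distinct indices among its $m_i$ copies, in $m_i!/(m_i-n_i)!$ ways each;
\item distribute the remaining $k-k_0$ indices freely over the $q$-coordinates, in $(k-k_0)!$ ways.
\end{itemize}
Summing over all negative edges adjacent to $\tilde{v}$ amounts to allowing every such assignment, which gives the displayed formula after dividing by $s$.

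\textbf{Step 3: integrality.} The main point to check is that $s$ divides $\mu(L(y)_{\tilde{v},j})$. Since $\vert y\vert\ge 1$ (there is at least the $j$-th coordinate equal to $y$), we have
\[
\frac{s!}{(s-\vert y\vert)!}=s(s-1)\cdots(s-\vert y\vert+1),
\]
whose first factor is $s$. Hence the quotient by $s$ is an integer.

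I expect the only delicate point to be justifying in Step 2 that the multiplicity of $y$ relevant to the count is exactly $s$. This requires knowing that $y$ is $2$-valent and that no other segment of $\graphvarphi$ reaches $(y,t_i)$ from below, which is where the construction of $\varphi$ in Proposition \ref{non-zero_slopes} is used.
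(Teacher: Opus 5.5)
Your proposal is correct and follows the paper's own proof. The paper also continues each $y$-coordinate of $\tilde{v}$ from below by the point of multiplicity $s$ and each $v_i$-coordinate by its continuation of multiplicity $m_i$. It then reuses the count from Lemma \ref{lemma:closed_formula_multiplicity} and gets integrality from $s \mid s!/(s-\vert y\vert)!$, which holds because $\vert y\vert\geq1$. (Your extra assumption that the $v_i$ stay away from vertices at height $t_i$ is not needed: the number of injective assignments of the $n_i$ slots to the $m_i$ labels converging to $v_i$ is $m_i!/(m_i-n_i)!$ however those labels are split among negative edges.)
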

\begin{proof}
	As $t\rightarrow t_i^-$, each $y$-coordinate of $\tilde{v}$ is continued by the point $x(t)$ of multiplicity $s$, and each $v_i$-coordinate by its unique continuation of constant multiplicity $m_i$, so that $\tilde{v}$ has a unique adjacent negative edge, and the same count as in Lemma \ref{lemma:closed_formula_multiplicity} yields the result. 
	This is an integer, since $\vert y\vert\geq1$ implies that $\frac{s!}{(s-\vert y\vert)!}$ is divisible by $s$. 
\end{proof}

\begin{remark}
	Note that the weight is independent of the choice of the index $j$ among the $\vert y\vert$ coordinates of $\tilde{v}$ equal to $y$, and that in the situation of Example \ref{example:weights_of_leaf_edges} ($\vert y\vert=k=k_0=s$, $l=0$) it recovers $w=(s-1)!$.
\end{remark}

\begin{proposition}\label{prop:weights_well_defined}
	The weights of Definition \ref{def:weights_of_W_1} are the unique weights on the segments of $\vert W\vert$ for which the fibres of $W$ satisfy Equation \eqref{eq:fibres_of_W}.
\end{proposition}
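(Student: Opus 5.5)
The plan is to compute the fibre $W_t$ directly from the definition via the Cartier divisor of $\pi_\R^*\max(x,t)$, following \cite[Construction 3.3, Definition 3.4]{tropical_intersection_theory_allerman_rau}, and to compare it term by term with Equation \eqref{eq:fibres_of_W}. Fix $t$ in the interior of a connected component of $\R\backslash\{t_0,\dots,t_r\}$. Then every segment $E$ of type $N$, $F$ or $P$ meets $C^k\times\{t\}$ transversally in the single point $E_t$, and no segment of type $L(y)$ meets this slice. On $E$ the function $\pi_\R^*\max(x,t)$ has slope $0$ on the side below $t$ and slope equal to the $\R$-coordinate of the primitive vector $u_E$, namely $c(E)$, on the side above. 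The Allermann--Rau construction therefore gives $E_t$ the coefficient $w(E)\cdot c(E)$ in $\mathrm{div}(\pi_\R^*\max(x,t)\vert_W)$. Pushing forward by $\pi_{C^k}$, which is injective on the slice, we get $W_t=\sum_E w(E)c(E)\,[E_t]$, where the sum runs over the distinct segments.

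The second step is to expand the right-hand side of Equation \eqref{eq:fibres_of_W}. Expanding each $(x_{\sigma(1)}(t)-q,\dots,x_{\sigma(k)}(t)-q)$ multilinearly gives the sum of $(-1)^{k-k_0}[\sigma(z_{(i_1,\dots,i_k)}(t))]$ over all pairs $(\sigma,(i_1,\dots,i_k))$. Collecting the pairs that land on the same point $E_t$ produces exactly $\mu(E)$, by Definition \ref{def:multiplicity_of_segment}. So the two sides agree if and only if $w(E)c(E)=\mu(E)$ for every $E$. This proves existence, with integrality supplied by the preceding remark, and also uniqueness: $c(E)\neq 0$, and distinct segments have distinct points $E_t$ for generic $t$, so every weight is forced.

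Two points need care, and I expect the first to be the main obstacle. First, I must check that $c(E)$ really is the $\R$-component of the primitive integral tangent vector. Along $E$, each non-constant coordinate $x_j(t)$ moves in $C$ at rate $1/\vert s_j\vert$ per unit of $t$, so the tangent vector is $(u_1/\vert s_1\vert,\dots,u_k/\vert s_k\vert,1)$. The smallest integral multiple of this vector is the one scaled by $\operatorname{lcm}\vert s_j\vert$, because each $u_j$ is primitive in $\Lambda_C$. This is exactly Definition \ref{def:primitive_vector}. Coordinates constant in $t$, namely $q$, or points on vertical edges of $\graphvarphi$, impose no condition, consistent with setting $s_j=1$ for them.

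Second, since $W$ is a cycle, the fibre is locally constant in $t$ on each component. So it suffices to treat generic $t$, and the values at $t=t_i$ follow by continuity of fibres, or equivalently from balancing, which is established separately. The $L(y)$ edges play no role here: $\pi_\R^*\max(x,t)$ is constant along them, so they are invisible to every fibre and are not constrained by Equation \eqref{eq:fibres_of_W}. This is why the statement concerns only the weights of Definition \ref{def:weights_of_W_1}.
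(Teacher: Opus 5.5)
Your proposal is correct and is essentially the paper's proof: for $t\notin\{t_0,\dots,t_r\}$ the Allermann--Rau coefficient of $E_t$ in $\mathrm{div}(\pi_\R^*\max(x,t)\vert_W)$ is $w(E)\cdot c(E)$, and matching it with the signed count $\mu(E)$ obtained by expanding Equation \eqref{eq:fibres_of_W} forces $w(E)=\mu(E)/c(E)$, with uniqueness because $c(E)\neq 0$. At $t=t_i$ the paper makes your ``continuity of fibres'' precise: since $\max(x,t_i)$ has zero linear part on negative and null edges, the coefficient of a vertex is $\sum_{E^+}w(E^+)c(E^+)=\sum_{E^+}\mu(E^+)$ over positive adjacent edges only, and this matches $A(t_i)$ by Lemma \ref{lemma:A(t)_well_def}; the fibres are not literally locally constant in $t$, so that phrase should be replaced by this limit-from-above statement.
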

\begin{proof}
	By Definition \ref{def:fibre_over_R}, the fibre over $x\notin\{t_0,\dots,t_r\}$ of a single edge $E$ of $W$ of weight $w(E)$ is $(\pi_{C^k})_*\operatorname{div}(\varphi_{x}\vert_W)$, where $\phi_{x}$ is the pullback of $\max(t,x)$. 
	Note that for $t\leq x$, the linear part of $\phi_{x}$ is the covector $dt$, and for $t>x$ it is zero. Applying the definition of the divisor of a rational function \cite[Construction~3.3, Definition~3.4]{tropical_intersection_theory_allerman_rau} at the point $E_{t}$, whose two adjacent facets are the two halves of $E$ with primitive outgoing vectors $\pm u_E$, we obtain the coefficient
	\begin{align}
		w(E)\cdot dt(u_E)+w(E)\cdot0= w(E)\cdot dt(u_E)=\mu(E).
	\end{align}
	For $x\in\{t_0,\dots,t_r\}$, Equation \eqref{eq:fibres_of_W} follows from Lemma \ref{lemma:A(t)_well_def}, since the coefficient of a point of $W$ in $\operatorname{div}(\phi_{t_i}\vert_W)$ is the sum of the quantities $w(E)\cdot dt(u_E)$ over its positive adjacent edges. Again, this follows from the fact that the linear part of $\phi_{x}$ is zero for $t<0$, hence on the negative adjacent edges. Uniqueness is immediate. 
\end{proof}

\subsubsection{$W$ is balanced}

We have seen that points with real coordinates in $\R\backslash\{t_0,\dots,t_r\}$ lie on the interior of edges $E$ of $\vert W\vert$, which are furthermore rational for the product tropical structure on $C^k\times\R$, and we have assigned to each such edge an integer weight $w(E)$. To show that this endows $\vert W\vert $ with the structure of a tropical cycle, it remains to prove the balancing condition, which is only non-trivial at vertices of valency $\geq2$.

These vertices are of the form $$\tilde{v}:=(z_{(i_1,\dots,i_k)}(t_i),t_i)\in C^k\times\R$$ for some $i\in\{1,\dots,r\}$, $\sigma\in S_k$, and $\{i_1,\dots,i_k\}\in\{0,1\}^k$. We want to understand the edges adjacent to this vertex, along with their multiplicites/weights. 
Our first step will be to reduce our task to only showing the balancing condition holds along the $k_0$ components of $C^k$ in $C^k\times\R$ which correspond to the indices $i_j=0$.  Let us first verify that the balancing condition always holds along the $\R$-direction:

\begin{lemma}
	Let $\tilde{v}\in C^k\times\R$ be any vertex point of $\vert W\vert$. Then the $\R$-component of the weighted sum of all outgoing primitive vectors for edges adjacent to $\tilde{v}$ is zero.
	\label{lemma:balancing_for_R_component}
\end{lemma}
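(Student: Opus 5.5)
The plan is to compute the $\R$-component of the balancing sum at $\tilde v$ directly from the weights, and to see that it is exactly the fibre-consistency already encoded in Proposition \ref{prop:weights_well_defined} and Lemma \ref{lemma:A(t)_well_def}.

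\textbf{Reduction to monotone edges.} The edges $L(y)_{\tilde v,j}$ lie in $C^k\times\{t_i\}$, so their primitive vectors $(0,\dots,-u,\dots,0,0)$ have zero $\R$-component and drop out. Every other edge adjacent to $\tilde v$ is of type $N$, $F$ or $P$, and is either \emph{positive} or \emph{negative}. Orient each such $E$ so that $t$ increases, with primitive vector $u_E$ as in Definition \ref{def:primitive_vector}. Its $\R$-component is $dt(u_E)=c(E)$. Therefore:
\begin{itemize}
\item a positive edge contributes $w(E)\cdot c(E)=\mu(E)$;
\item a negative edge has outgoing vector $-u_E$ and contributes $-\mu(E)$.
\end{itemize}
The claim thus reduces to
$$\sum_{E^+\ni\tilde v}\mu(E^+)=\sum_{E^-\ni\tilde v}\mu(E^-),$$
where the sums range over positive and negative edges adjacent to $\tilde v$. (Equal multiplicity of a single edge from several labels $(\sigma,(i_1,\dots,i_k))$ is already absorbed into $\mu(E)$.)

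\textbf{Both sides count the same labelled points.} By Definition \ref{def:multiplicity_of_segment}, the left-hand side is the signed count $\sum(-1)^{k-k_0}$ over pairs $(\sigma,(i_1,\dots,i_k))$ whose point $\sigma(z_{(i_1,\dots,i_k)}(t),t)$ converges to $\tilde v$ as $t\to t_i^+$. The right-hand side is the same count as $t\to t_i^-$.

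By Lemma \ref{lemma:A(t)_well_def}, the labels $x_1(t),\dots,x_k(t)$ can be chosen continuous across $t_i$. With this choice, each labelled pair $(\sigma,(i_1,\dots,i_k))$ traces a continuous path in $C^k\times\R$. Its limit at $t_i$ is the same from both sides, and its sign $(-1)^{k-k_0}$ does not depend on $t$.

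\textbf{Matching of limits.} The set of pairs whose limit equals $\tilde v$ is therefore identical from the left and from the right. Each such pair lies, for $t$ slightly below (respectively above) $t_i$, on exactly one negative (respectively positive) edge at $\tilde v$. Grouping pairs by the edge they lie on recovers the two sums, which gives the equality. In particular, this handles the case where the multiset $A(t)$ changes combinatorially at $t_i$ (Examples \ref{example_1} and the boundary-point example): the total multiplicity $k$ is preserved, and individual labels move continuously.

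\textbf{Expected obstacle.} The delicate point is justifying that the labels can be continued across a vertex of $\graphvarphi$ consistently with multiplicities. At a vertex, several branches merge or split, for example a point of multiplicity $2$ splitting into two points of multiplicity $1$. The labels must be distributed among the branches so that:
\begin{itemize}
\item each branch receives exactly as many labels as its multiplicity;
\item points arriving along a vertical semi-infinite edge (type $(2)$) are counted with its weight.
\end{itemize}
This is precisely the well-definedness statement proved before Lemma \ref{lemma:A(t)_well_def}: the sums of positive and negative multiplicities at a vertex of $\graphvarphi$ agree. I would invoke it to produce a bijection between incoming and outgoing labels at each point of $C$ over $t_i$. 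The product structure of $\tilde v$ then lifts this to the labelled points of $C^k$.

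The remaining degenerate cases need only a direct check, since there the limit statements still hold coordinatewise:
\begin{itemize}
\item $t_i\geq\varphi(q)$, where all $\mu$ vanish;
\item $t_i=a$ or $t_i=b$, where $N$ and $P$ edges appear.
\end{itemize}
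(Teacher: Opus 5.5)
Your proof is correct, and it takes a more uniform route than the paper's. The reduction to $\sum_{E^+}\mu(E^+)=\sum_{E^-}\mu(E^-)$ is the same in both.

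The paper then splits into cases:
\begin{itemize}
\item for $t_i>\varphi(q)$, all multiplicities vanish;
\item for $t_i<\varphi(q)$, it invokes Remark~\ref{rmk:sign_factors_out} (the sign factors out) together with the conservation of multiplicities from Lemma~\ref{lemma:A(t)_well_def};
\item for $t_i=\varphi(q)$, every positive edge has $\mu=0$ but the negative ones individually do not, so it proves $\sum_{E^-}\mu(E^-)=0$ by an explicit alternating binomial count over how many $q$-slots of $\tilde v$ are filled by moving labels rather than constant $q$'s.
\end{itemize}

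Your observation replaces all of this. Once the labels $x_a$ are chosen continuous, both sides are literally the signed count of pairs $(\sigma,(i_1,\dots,i_k))$ whose value at $t_i$ is $\tilde v$. The paper's own definition of the $F$-edges over closed intervals $[t_i,t_{i+1}]$ already presupposes such a labelling. This settles all cases at once and gives the cancellation at $t_i=\varphi(q)$ for free. It also sidesteps the paper's assertion there that $m=n$, which fails in general; that count only needs $n\geq m\geq 1$.

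Your only input is a globally continuous labelling. You correctly trace its existence to the equality of incoming and outgoing multiplicities at each vertex of $\graphvarphi$.

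What the paper's explicit counting buys is edge-by-edge information. It reuses this kind of count in Lemma~\ref{lemma:balancing_for_q_components} and Proposition~\ref{prop:W_balanced}, where contributions are weighted by $u_e/s_e$ and conservation of labels alone is no longer enough.

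One small correction: in your final list you say all $\mu$ vanish for $t_i\geq\varphi(q)$. At $t_i=\varphi(q)$ this holds only for the positive edges; the negative multiplicities vanish only in total. That total is exactly what your continuity argument establishes, so no separate check is needed there.
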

\begin{proof}
	Edges of $\vert W\vert$ adjacent to $\tilde{v}$ split into negative edges, positive edges, and null edges belonging to $L(y)$ for endpoints $y$ with $\varphi(y)=t_i$. The latter have vanishing $\R$-component by construction, and do not contribute. A positive (resp.\ negative) adjacent edge $E$ has outgoing primitive vector $u_E$ (resp.\ $-u_E$), whose $\R$-component is $c(E)$ (resp.\ $-c(E)$) by Definition 3.16. Since $w(E)\cdot c(E)=\mu(E)$ by Definition 3.17, the $\R$-component of the weighted sum of outgoing primitive vectors is
	$$\sum_{E^+}\mu(E^+)-\sum_{E^-}\mu(E^-),$$
	where the sums range over the positive and negative adjacent edges respectively. 
	For $t_i>\varphi(q)$, we know in fact that both terms are zero, and for $t_i<\varphi(q)$, it follows immediately from Remark \ref{rmk:sign_factors_out} and Lemma \ref{lemma:A(t)_well_def}. It remains to consider the case $t_i=\varphi(q)$. In this case, the multiplicities of any positive adjacent edges are zero, so the only contribution is from multiplicities of negative edges. 
	Recall that the multiplicity of each negative adjacent edge $E^-$ is given by summing $(-1)^{k-k_0}$ over all pairs $(\sigma,\{i_1,\dots,i_i\})$ such that $(z_{\{i_1,\dots,i_k\}}(t),t)\rightarrow \tilde{v}$ when $t\rightarrow t_i^-$. 
	We will show that the contributions from \textit{different} edges cancel out amongst each other.
	 
	Let $n\geq1$ be the number of coordinates of $\tilde{v}$ equal to $q$, and $m$ the number of times $q$ appears in $A(t_i)$. The pairs counted by the multiplicity fill either of the $n$ $q$-coordinates of $\tilde{v}$ with either one of the $m$ points of $A(t_i)$ equal to $q$, or with the corresponding fixed $q$-coordinate (i.e. $i_j=1$). These occur with opposite signs. The negative edges $E^-$ can be seperated into groups indexed by $l\in\{0,\dots,n\}$, where $l$ of the $n$ coordinates of $E^-$ converging to $q$ are non-constant (i.e. i.e. points of $A(t)$ converging to $q$), and the $(n-l)$ others are contant.
	Recording contributions from each of these groups as in Lemma \ref{lemma:closed_formula_multiplicity}, one finds
	\begin{align}
		\sum_{E^-}\mu(E^-)&=\sum_{l=0}^n\binom{n}{l}\frac{m!}{(m-l)!}(-1)^{n-l}(n-l)! \notag
		&= n!(-1)^n\sum_{l=0}^n\binom{m}{l}(-1)^l\notag
	\end{align}
	Now notice that $m=n$: indeed, for any $j\in\{1,\dots,k\}$ for which $x_j(t_i)=q$ is \textit{not} in $A(t_i)$ we must have $i_j=1$, also contributing a $q$-coordinate to $\tilde{v}$. This means the binomial sum is a total sum and is equal to $(1-1)^0=0$, and we conclude. 
	
\end{proof}

Now, we verify that the balancing condition holds along the $C^{k-k_0}$-component of $C^k$ which corresponds to the $j$-coordinates with $i_j=1$:

\begin{lemma}
	Let $\tilde{v}\in C^k\times\R$ be a vertex point of $\vert W\vert$. Then the $(k-k_0)$ $C$-components of the weighted sum of all outgoing primitive vectors for edges adjacent to $\tilde{v}$ corresponding to $i_j=1$ are zero.
	\label{lemma:balancing_for_q_components}
\end{lemma}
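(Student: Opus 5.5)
The plan is to show that along each coordinate $j$ with $i_j=1$ (the coordinate is the constant $q$), every adjacent edge of $\tilde{v}$ has zero $j$-th component in its outgoing primitive vector, or that the nonzero contributions cancel.

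\textbf{Edges fixing the $q$-coordinate.} Fix such a $j$. For an edge $E$ of type $N$, $F$ or $P$ adjacent to $\tilde{v}$ whose defining pair $(\sigma,(i_1,\dots,i_k))$ keeps $i_{\sigma^{-1}(j)}=1$, the $j$-th coordinate of every point of $E$ is the constant $q$. So $u_j=0$ in Definition \ref{def:primitive_vector}, and $E$ contributes nothing along this factor. Likewise an edge in some $L(y)$ only moves the coordinate equal to $y$. Since $y$ is a boundary point lying in the interior of an edge adjacent to a leaf, $y\neq q$ unless $q$ is itself a boundary point. In that case $q$ has sedentarity zero but its leaf segment carries constant $\varphi$; I would treat it together with the next step. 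Hence the only potential contributions come from edges along which the $j$-th coordinate is a moving point $x_i(t)$ converging to $q$.

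\textbf{Edges with a moving coordinate converging to $q$.} Such edges exist only when $t_i=\varphi(q)$, and then only as negative edges: for $t>\varphi(q)$ the point $q$ lies on the vertical semi-infinite edge $\{q\}\times[\varphi(q),+\infty)$ of $\graphvarphi$, so its $C$-direction vanishes. For $t_i\neq\varphi(q)$ the $j$-th coordinate is therefore constant on every adjacent edge, and the lemma holds trivially. For $t_i=\varphi(q)$ I would group the negative edges converging to $\tilde{v}$ by their combinatorial type away from coordinate $j$. Within each group, the edges whose $j$-th coordinate is constant $q$ (with $i=1$) contribute zero. The edges whose $j$-th coordinate is a moving branch of $\graphvarphi$ arriving at $(q,\varphi(q))$ along a segment of slope $s$ contribute $w(E)\cdot c(E)/|s|$ times the tangent direction $-u$ of that segment.

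Next I would write $w(E)c(E)=\mu(E)$ and use Lemma \ref{lemma:closed_formula_multiplicity}. For fixed remaining coordinates and a fixed incoming branch at $q$, the total signed count $\sum\mu(E)$ of configurations placing that branch in slot $j$ has the same alternating-binomial structure as in the proof of Lemma \ref{lemma:balancing_for_R_component}. The other $n-1$ coordinates equal to $q$ are filled either by the $m-1$ remaining copies of $q$ in $A(\varphi(q))$ or by constants, with opposite signs. Because $m=n$, as argued there, the sum is $\pm(n-1)!\sum_l\binom{n-1}{l}(-1)^l=0$ whenever $n\ge2$.

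\textbf{Main obstacle.} The hard case is $n=1$, where this cancellation is unavailable. Here I expect to compare against the single configuration with $i_j=1$ and use that $m=n=1$ forces the copy of $q$ in $A(\varphi(q))$ to come from one branch. Its contribution then has to be balanced either by the structure of $\graphvarphi$ at $(q,\varphi(q))$ or by the vanishing of the whole vertex contribution, since $\mu=0$ above $\varphi(q)$. I would first check whether this configuration actually lands in a coordinate with $i_j=0$, which would make it irrelevant to the present lemma; this bookkeeping is where I expect difficulty.
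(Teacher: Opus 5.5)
Your reduction matches the paper's: edges in $L(y)$ and edges with constant $j$-th coordinate contribute nothing, only negative edges at $t_i=\varphi(q)$ matter, and the $j$-th component becomes $\sum_\alpha \frac{u_\alpha}{s_\alpha}N_\alpha$ with $N_\alpha$ an alternating count. But your argument stops where the content of the lemma lies, and the count you write down rests on an incorrect identity. The relation $m=n$ that you import from the proof of Lemma~\ref{lemma:balancing_for_R_component} does not hold in general. Let $r$ be the number of indices with $x_j(t_i)=q$ and $i_j=1$. Then $n=(k-k_0)+(m-r)\geq m$, with strict inequality as soon as a constant $q$ sits at an index whose $x_j(t_i)\neq q$. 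Only $m-1$ further labels converge to $q$, so the correct count is
\[
N_\alpha=M\,s_\alpha\sum_{l=0}^{m-1}\binom{n-1}{l}\frac{(m-1)!}{(m-1-l)!}\,(-1)^{n-1-l}\,(n-1-l)!=M\,s_\alpha\,(n-1)!\,(-1)^{n-1}\,(1-1)^{m-1}.
\]
So the cancellation needs $m\geq 2$, not $n\geq 2$. If $m=1$, then $N_\alpha=\pm M\,(n-1)!$, which is nonzero as soon as such edges exist, whatever $n$ is. Hence the degenerate case cannot be repaired by further bookkeeping; it has to be shown never to occur.

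The missing idea is that $m\geq 2$ always, and this comes from the coefficient of $q$ in $\mathrm{div}(\varphi)$. Here $m=s_1+\dots+s_d$ is the total absolute slope of $\varphi$ on the segments descending into $q$. The coefficient of $q$ is $\sum_\beta\rho_\beta-m\leq -1$, where $\rho_\beta\geq 1$ are the slopes on the ascending segments. If some ascending segment exists, $m\geq 1+\rho_\beta\geq 2$. Otherwise every segment at $q$ descends, and since $q$ has sedentarity zero there are $d\geq 2$ of them, so $m\geq d\geq 2$.

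This last step needs every segment at $q$ to have nonzero slope, i.e. $q$ must not be an endpoint $y$, since Proposition~\ref{non-zero_slopes} allows zero slopes on leaf segments. The paper arranges this by choosing the endpoints away from $q$. So the case $q=y$ that you defer is not absorbed by your second step. There $d=1$ and $m$ may equal $1$, and the $L(y)$-edges of the construction are attached only to slots with $i_j=0$, so nothing in slot $j$ would compensate. With $n\geq m\geq 2$, your ``hard case'' $n=1$ never arises and the lemma follows.
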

\begin{proof}
	Fix an index $j$ with $i_j=1$, so that the $j^{th}$ coordinate of $\tilde{v}$ is $q$. Recall from Lemma \ref{non-zero_slopes} that the endpoints $y$ for $\varphi$ are interior edge points of $C$, and that this choice may be made so that no endpoint equals $q$. Any edge of some $L(y)$ adjacent to $\tilde{v}$ has its unique non-constant coordinate in a slot $j'$ with $v_{j'}=y\neq q$, so that $j'\neq j$ and its outgoing primitive vector has vanishing $j^{th}$ component. So we restrict to \textit{monotone} (i.e.positive or negative) adjacent edges. 
	Recall that if $t_i>\varphi(q)$, the multiplicity of \textit{any} adjacent edge is zero, hence balancing is trivial. 
	
	If $t_i<\varphi(q)$, every monotone adjacent edge $E$ has $i_j=1$, hence the $j^{th}$ coordinate is constant equal to $q$ and the $j^{th}$ component of the primitive outgoing vector is zero.
	
	It remains to consider the case $t_i=\varphi(q)$. Positive adjacent edges have interior $\R$-coordinates in $(\varphi(q),t_{i+1})$, hence weight zero by Remark \ref{rmk:sign_factors_out}, so only negative adjacent edges contribute. Denote by $e_1,\dots,e_d$ the segments of $C$ adjacent to $q$ on which $\varphi<\varphi(q)$, by $s_\alpha$ the absolute value of the slope of $\varphi$ on $e_\alpha$, and by $u_\alpha$ the outgoing primitive vector of $e_\alpha$ from $q$. For $t\in(t_{i-1},t_i)$, points of $A(t)$ converging to $q$ are points $x_\alpha(t)$ in the interior of each $e_\alpha$, of multiplicity $s_\alpha$. We set $s:=s_1+\dots+s_d$, the multiplicity of $q$ in $A(t_i)$, and $n$ to be the number of coordinates of $\tilde{v}$ equal to $q$. Notice that $n\geq s$. Indeed, $s$ indices satisfy $x_j(t_i)=q$; if $r$ of these have $i_j=1$, then $n=(k-k_0)+(s-r)\geq s$, since $r\leq k-k_0$.

	A negative adjacent edge $E$ contributes to the $j^{th}$ component of the balancing condition only if its $j^{th}$ coordinate is non-constant, in which case this coordinate is the point $x_\alpha(t)$ for some $\alpha\in\{1,\dots,d\}$. By Definitions \ref{def:multiplicity_of_segment} and \ref{def:weights_of_W_1}, it contributes  $$w(E)\cdot\frac{c(E)}{s_\alpha}\,u_\alpha=\frac{\mu(E)}{s_\alpha}\,u_\alpha.$$ 
	
	Let $N_\alpha$ be the signed number of pairs $(\sigma,\{i_1,\dots,i_k\})$ such that $\sigma(z_{\{i_1,\dots,i_k\}(t)},t)$ whose $j^{th}$ coordinate is a label of $x_\alpha(t)$, and which converges to $\tilde{v}$ as $t\rightarrow t^-_i$. Then the $j^{th}$ component of the balancing condition is $$\sum_{\alpha=1}^{d}\frac{u_\alpha}{s_\alpha}\,N_\alpha.$$

	We claim each $N_\alpha$ vanishes. Every pair contributing to $N_\alpha$ consists of the following choices. The first is a matching of the coordinates of $\tilde{v}$ different from $q$ with labels of their continuations below $t_i$, in some number $M\geq0$ of ways. The second is the choice of one of the $s_\alpha$ labels of $x_\alpha(t)$ for the slot $j$. The third is the choice of $l\leq n-1$ coordinates of $A(t)\backslash\{x_\alpha(t)\}$ which converge to $q$ (i.e. of type $x_{\alpha'}(t))$. Notice that the stronger bound of $l\leq s-1$ is forced. The fourth is a free distribution of the $(n-1-l)$ remaining $q$-slots of $\tilde{v}$, which inherit $i_{j'}=1$ and contribute $-1$ signs. Hence
	\begin{align}
		N_\alpha&=M\cdot s_\alpha\cdot\sum_{l=0}^{s-1}\binom{n-1}{l}\cdot\frac{(s-1)!}{(s-1-l)!}\cdot(-1)^{n-1-l}\cdot(n-1-l)! \notag \\
		&=M\cdot s_\alpha\cdot(n-1)!\cdot(-1)^{n-1}\sum_{l=0}^{s-1}(-1)^{l}\binom{s-1}{l} \notag \\
		&= M\cdot s_\alpha\cdot(n-1)!\cdot(-1)^{n-1}\cdot(1-1)^{s-1} \notag \\
		&=0, \notag
	\end{align}
	where the passage from the first to the second line uses $\binom{n-1}{l}\cdot(n-1-l)!=\frac{(n-1)!}{l!},$ and the final cancellation requires that $s-1\geq 1$, which we now prove.
	Write $\rho_\beta$ for the slopes of the segments of $C$ adjacent to $q$ on which $\varphi>\varphi(q)$. Then the coefficient of $q$ in $\mathrm{div}(\varphi)$ is $\sum_\beta\rho_\beta-s\leq-1$. If there is at least one such segment, then $s\geq1+\sum_\beta\rho_\beta\geq2$. Otherwise, all $d\geq2$ segments of $C$ adjacent to $q$ satisfy $\varphi<\varphi(q)$, and $s\geq d\geq2$.
\end{proof}

We have reduced the proof of the balancing condition for $\vert W\vert$ to the remaining $k_0$ factors of $C^k\times\R$; we now carry this out.

\begin{proposition}\label{prop:W_balanced}
	Endowed with the weights of Definitions \ref{def:weights_of_W_1} and \ref{def:weights_of_W_2}, the weighted polyhedral complex $\vert W\vert$ satisfies the balancing condition at each of its vertices. In particular, $W$ is a tropical $1$-cycle in $C^k\times\R$.
\end{proposition}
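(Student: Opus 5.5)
By Lemmas \ref{lemma:balancing_for_R_component} and \ref{lemma:balancing_for_q_components}, it remains to check balancing at a vertex $\tilde{v}=(v_1,\dots,v_k,t_i)$ in each $C$-factor $j$ with $i_j=0$, i.e.\ slots filled by a point $x_{\sigma(j)}(t)\in A(t)$. The strategy is to reduce this to the balancing of $\graphvarphi$ at the point $(v_j,t_i)$, with the other coordinates treated as a fixed combinatorial multiplier. First dispose of $t_i>\varphi(q)$, where all weights vanish by Remark \ref{rmk:sign_factors_out}. Then treat $t_i\le\varphi(q)$, and fix a slot $j$ with $v_j\neq q$; the case $v_j=q$ with $i_j=0$ at $t_i=\varphi(q)$ is handled by the alternating-sum argument of Lemma \ref{lemma:balancing_for_q_components} verbatim.

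\textbf{Step 1: contributions of monotone edges.} Fix $j$ and a monotone adjacent edge $E$. By Definition \ref{def:weights_of_W_1}, the $j$-th component of $w(E)$ times the outgoing primitive vector is $\pm\mu(E)u_j/|s_j|$, which is $0$ if $v_j$ lies on a vertical edge of $\graphvarphi$. Group the edges according to which edge $e$ of $\graphvarphi$ at $(v_j,t_i)$ carries the $j$-th coordinate. Summing $\mu(E)$ over all $E$ whose $j$-th slot lies on $e$, the choices in the other slots factor out, exactly as in the count of Lemma \ref{lemma:closed_formula_multiplicity}. Their total is a number $M$ that is independent of $e$, because on both the positive and the negative side the other coordinates' continuations carry the same total multiplicities; this is the content of Lemma \ref{lemma:A(t)_well_def}. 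The choice of label in slot $j$ contributes $|s_e|$, the multiplicity of the point in $A(t)$ along $e$. Hence the $j$-th component of the sum over monotone edges is
\[
M\cdot\sum_{e\ \text{non-vertical at }(v_j,t_i)} |s_e|\,\frac{u_e}{|s_e|}=M\sum_e u_e ,
\]
where $u_e$ is the outgoing $C$-direction of $e$. If $v_j$ is not a boundary point, then $\sum_e u_e=0$ by smoothness of $C$: $C$ is locally $L(n)$ and every direction at $v_j$ appears once. The needed input is that every edge of $C$ at $v_j$ contributes a non-vertical edge of $\graphvarphi$, which is exactly where Proposition \ref{non-zero_slopes} (non-zero slopes) is used.

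\textbf{Step 2: boundary points.} If $v_j=y$ is an endpoint for $\varphi$, the direction $-u$ into $e_y$ is missing from the sum above, leaving the residue $M\cdot u$, where $u$ points into the interior edge. This is compensated by the edge $L(y)_{\tilde{v},j}$, with outgoing vector $-u$ in slot $j$ and weight $\mu(L(y)_{\tilde{v},j})/s$ from Definition \ref{def:weights_of_W_2}. I would check that $\mu(L(y)_{\tilde v,j})/s$ equals $M$, via the closed formula for $w(L(y)_{\tilde v,j})$, extending the computation of Example \ref{example:weights_of_leaf_edges}. Any other $L(y')$ edges at $\tilde{v}$ move only slots $j'\neq j$ and so do not contribute in slot $j$.

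\textbf{Main obstacle.} The delicate step is justifying that $M$ is really independent of $e$. Several coordinates of $\tilde{v}$ may coincide with $v_j$, so the same vertex of $\graphvarphi$ feeds several slots at once, and the label counts $m!/(m-n)!$ then interact across slots. I would handle this by summing over labelings rather than over edges: fix slot $j$ and its label, and biject the completions of the remaining slots on the incoming and outgoing sides using the well-definedness of multiplicities at vertices of $\graphvarphi$. The sign bookkeeping at $t_i=\varphi(q)$ needs the same binomial cancellation as in Lemma \ref{lemma:balancing_for_R_component}.
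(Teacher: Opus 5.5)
Your proposal is correct and follows essentially the same route as the paper. Both arguments reduce to the slots with $i_j=0$ and discard $t_i>\varphi(q)$. Both then show that the signed count $N_e$ of adjacent edges whose $j$-th slot runs along a segment $e$ at $v_j$ factors as $s_e$ times an $e$-independent constant, and conclude by balancing of $C$ at $v_j$. Both use $L(y)_{\tilde v,j}$, of weight $\mu/s$, to cancel the missing leaf direction at boundary points, and both use the binomial cancellation $(1-1)^s=0$ at $t_i=\varphi(q)$. The only difference is organisational: you merge the paper's first two sub-cases (interior point of $\graphvarphi$, and vertex of valency $\geq3$) into one computation. Your proposed resolution of the coinciding-slots issue (fix the label in slot $j$, then count completions using that each value of $\tilde v$ has the same number of converging labels from above and below) is exactly the paper's factor $\frac{(m(v_j)-1)!}{(m(v_j)-n(v_j))!}$.
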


\begin{proof}
	Let $\tilde{v}=(z_{(i_1,\dots,i_k)}(t_i),t_i)$ be a vertex of $\vert W\vert$ (as before we assume $\sigma=\mathrm{id}$). By Lemma \ref{lemma:balancing_for_R_component}, the $\R$-component of the weighted sum of outgoing primitive vectors vanishes, and by Lemma \ref{lemma:balancing_for_q_components}, so do the $C$-components at the indices $j$ with $i_j=1$. Fix an index $j$ with $i_j=0$, and set $v_j:=x_j(t_i)\in C$.
	
	As usual, for $t_i>\varphi(q)$, all adjacent edges to $\tilde{v}$ have weight zero and there is nothing to prove. We will treat the cases $t_i=\varphi(q)$ and $t_i<\varphi(q)$ separately, but we first introduce some shared notation.
	Let $E$ be a non-null edge adjacent to $\tilde{v}$. If the $j^{th}$ coordinate of $E$ is constant, i.e. coming from a semi-infinite edge of $\graphvarphi$, $E$ contributes nothing. Otherwise, the $j^{th}$ coordinate of $E$ travels along a segment $e$ of $C$ adjacent to $v_j$, and the corresponding point appears in $A(t)$ with multiplicity $s_e$ equal to the absolute value of the slope of $\varphi$ on $e$. Denoting by $u_e$ the primitive direction at $v_j$ pointing into $e$, the $j^{th}$ component of the outgoing primitive vector of $E$ is $\frac{c(E)}{s_e}u_e$ by Definition \ref{def:primitive_vector}. We set $N_e$ to be the signed number of pairs $(\sigma,\{i_1,\dots,i_k\})$ such that $\sigma(z_{i_1,\dots,i_k}(t),t)$ converges to $\tilde{v}$ as $t\rightarrow t_i$, and whose $j^{th}$ coordinate travels along $e$, i.e. $$N_e:=\sum_{\substack{E \text{ non-null adjacent to $\tilde{v}$},\\ j^{th}\text{ coordinate on } e}}\mu(E).$$
	Since $w(E)\cdot c(E)=\mu(E)$, the $j^{th}$ component of the contributions from non-null edges adjacent to $\tilde{v}$ to the balancing condition is
	$$\sum_{e\ni v_j}\frac{u_e}{s_e}\,N_e,$$
	where $e$ ranges over the segments of $C$ adjacent to $v_j$ on which $\varphi$ is non-constant.

	If $t_i=\varphi(q)$, we claim $N_e=0$ for every $e$. If $v_j=q$, this is established in the proof of Lemma \ref{lemma:balancing_for_q_components}. If $v_j\neq q$, a very similar reasoning applies, except that $j$ no longer indexes a point converging to $q$. Again, if $n$ is the number of coordinates of $\tilde{v}$ equal to $q$, and $s$ the number of indiced of $A(t)$ labelling points which converge to $q$ when $t\rightarrow t_i^-$ (so that $n\geq s\geq2$, as in the proof of Lemma \ref{lemma:balancing_for_q_components}), $N_e$ can be written as a sum
	$$\sum_{l=0}^{s}\binom{n}{l}\cdot\frac{s!}{(s-l)!}\cdot(-1)^{n-l}(n-l)!=n!\,(-1)^{n}\sum_{l=0}^{s}(-1)^{l}\binom{s}{l}=n!\,(-1)^n(1-1)^{s}=0,$$ where $l$ indexes the number of non-constant coordinates of an edge $E$ which converge to $q$. Finally, assume there is a null edge adjacent to $\tilde{v}$, which is equivalent to the existence of an endpoint $y$ for $\varphi$ for which $\varphi(y)=\varphi(q)=t_i$. By Definition \ref{def:weights_of_W_2}, any edge of $L(y)$ adjacent to $\tilde{v}$ has weight $\frac{1}{s'}\sum_{E^-}\mu(E^-)$, where $s'$ is the non-zero slope of $\varphi$ on the adjacent edge of $y$. We have just shown that this sum vanishes; so the $j^{th}$-component of the balancing condition is satisfied.

	Now we treat the final case $t_i<\varphi(q)$. We distinguish three sub-cases based on the type of point $(v_j,t_i)$ of $\graphvarphi$. 

\emph{First case:} when $(v_j,t_i)$ lies in the interior of an edge of $\graphvarphi$. Note this edge could be the image of an edge of $C$, or a semi-infinite edge coming from the balanced graph construction. There are exactly two segments $e^-$ and $e^+$ of $C$ adjacent to $v_j$, in the direction of increasing and decreasing $t_i$, so $s_{e^-}=s_{e^+}=\vert s_j\vert$ and $u_{e^-}=-u_{e^+}$. Every edge of $\vert W\vert$ converging to $\tilde{v}$ from below (resp. above) has its $j^{th}$ coordinate on $e^-$ (resp.\ $e^+$). The signed count of these negative and positive edges coincide following Lemma \ref{lemma:A(t)_well_def} and Remark \ref{rmk:sign_factors_out}. Denoting this count $\mu$, the $j^{th}$-component of the balancing condition is $\frac{\mu}{\vert s_j\vert}(u_{e^-}+u_{e^+})=0$.

\emph{Second case:} when $(v_j,t_i)$ is a vertex of $\graphvarphi$ of valency $\geq3$ which is not of the form $(y,\varphi(y))$ for an endpoint $y$. Let $n(v_j)$ be the number of coordinates of $\tilde{v}$ equal to $v_j$, and let $m(v_j)$ be the multiplicity of $v_j$ in $A(t_i)$.

	 For any segment $e$ of $C$ adjacent to $v_j$, recall that whether $\varphi(e)\leq\varphi(v_j)$ or $\varphi(e)\geq\varphi(v_j)$ determines whether the edges counted in $N_e$ have $\R$-coordinates $\leq t_i$ of $\geq t_i$.
	 A pair counted by $N_e$ consists of the following data. First, one of the $s_e$ labels on $e$ for the slot $j$. Second, a choice of $n(v_j)-1$ other indices among the $m(v_j)-1$ which correspond to points of $A(t)$ converging to $v_j$ as $t\rightarrow t_i$, in $\frac{(m(v_j)-1)!}{(m(v_j)-n(v_j))!}$ ways. Finally, a continuation of all remaining coordinates of $\tilde{v}$, in some number $M$ of ways. It is not hard to see that $M$ does not depend on $e$.

	Hence $$N_e=(-1)^{k-k_0}\cdot M\cdot  m_e\,\frac{(m(v_j)-1)!}{(m(v_j)-n(v_j))!}$$ for \emph{any} adjacent segment $e$, and the $j^{th}$ component equals
	$$(-1)^{k-k_0}\cdot M\frac{(m(v_j)-1)!}{(m(v_j)-n(v_j))!}\sum_{e\ni v_j}u_e=0,$$
	where the sum ranges over all segments of $C$ adjacent to $v_j$ ($\varphi$ is non-constant on each of them, by Proposition 3.3 and since $v_j$ is not an endpoint), and vanishes by the balancing condition on $C$.
	
	\emph{Third case:} when $v_j=y$ is an endpoint for $\varphi$ with $\varphi(y)=t_i$. Then $y$ is $2$-valent, and none of the points on its adjacent segment $e_y$ on which $\varphi$ is constant are in $A(t)$ by construction. Let $s$ be the non-zero slope of $\varphi$ on the other (interior) segment $e$ adjacent to $y$, pointing away from $y$. Let us assume $s<0$, the case $s>0$ is treated identically. Then $\graphvarphi$ has a semi-infinite edge of the form $\{y\}\times[t_i;+\infty)$ of weight $s$. Any positive edge adjacent to $\tilde{v}$ therefore has constant $j^{th}$-coordinate equal to $y$, hence does not contribute to balancing in this direction. Any negative edge $E$ has $j^{th}$-coordinate on $e$, hence the $j^{th}$ component of its primitive vector is $\frac{c(E)}{s}u_e$ and $E$ contributes $w(E)\cdot\frac{c(E)}{s}u_e=\frac{\mu(E)c(E)}{c(E)s}u_e=\frac{\mu(E)}{s}u_e$ to the $j^{th}$-component of the balancing condition. By Definition \ref{def:weights_of_W_2}, the sum of contributions from each of these edges is therefore $\frac{\mu(L(y)_{\tilde{v},j})}{s}u_e$.
	Recall from Definition \ref{def:weights_of_W_2} that the weight of the edge $L(y)_{(\tilde{v},j)}$ is $\frac{\mu(L(y)_{\tilde{v},j})}{s}$ and its primitive vector is $-u_e$ in the $j^{th}$-component and zero elsewhere. Hence balancing along the $j$-direction yields $$\left(\frac{\mu(L(y)_{\tilde{v},j})}{s}- \frac{\mu(L(y)_{\tilde{v},j})}{s}\right) u_e=0.$$
	 
	In all cases, the $j^{th}$ component of the weighted sum vanishes, from which we conclude.
\end{proof}

We have thus completed the proof of Proposition \ref{prop:tropical_voevodsky_curves}, and therefore of Theorem \ref{thm:tropical_voevodsky}.

\section{Applications}\label{section:examples}

\subsection{Tropical product $2$-tori}\label{section:examples_1}

In this section, we carry out explicitly the main construction of the rational equivalence $W\subset C^k\times\R$ from Proposition \ref{prop:tropical_voevodsky_curves}, in the case where $C\cong S^1$ has genus $1$. Notice that the data of a tropical structure on $S^1\cong \R/\Z$ is simply the data of a lattice in $\R$, or, equivalently, of any non-zero real number $\frac{1}{a}\in\R$. From the metric graph perspective, this amounts to setting the diameter of $S^1$ to be $a$. We denote the corresponding tropical curve by $S^1_a$. We prove the following:

\begin{proposition}\label{prop:CH_0_for_T^2}
	For any $a\in\R$ and two points $p,q\in S^1_a$, $2\cdot(p-q)^2=0\in CH_0((S_a^1)^2).$
\end{proposition}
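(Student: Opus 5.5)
We aim to show $2\cdot(p-q)^2\sim 0$ on $T:=(S^1_a)^2$. Rather than running the general construction of Section \ref{section:constructing_W}, which needs a nowhere-constant $\varphi$ with $\mathrm{div}(\varphi)=k\cdot p-q-D$ and would give only $3!\cdot(p-q)^3\sim0$, we build a bounded rational equivalence in the sense of Definition \ref{def:rational_equivalence} directly. We then conclude with Proposition \ref{prop:equivalence_rational_equivalence_definitions}.

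\textbf{Reduction.} Expand
\[
(p-q)^2=(p,p)-(p,q)-(q,p)+(q,q).
\]
Translations of the tropical torus $T=\R^2/a\Z^2$ are tropical automorphisms, so by Remark \ref{rmk:pushforward_on_Chow} we may assume $q=0$ and $p=(c)$ for some $c\in[0,a)$. The claim then becomes
\[
2\big[(c,c)-(c,0)-(0,c)+(0,0)\big]\sim0 .
\]
Geometrically this is twice the ``parallelogram'' $0$-cycle on the square with corners $(0,0)$, $(c,0)$, $(0,c)$, $(c,c)$.

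\textbf{Key step: a tropical curve through the parallelogram.} Consider the anti-diagonal translate
\[
X:=\{(x,y)\in T : x+y=c\}.
\]
It is a tropical $1$-cycle (a closed geodesic of primitive direction $(1,-1)$). It passes through $(c,0)$ and $(0,c)$, but not through the other two corners unless $c=0$. So instead I would take the rational function $\psi$ on $T$ given by the quadratic-type tropical theta data, i.e.\ a piecewise-linear function on the cover $\R^2$ with quasi-periodic behaviour. Restricted to the union of the two lines
\[
\{x=0\}\cup\{x=c\},
\]
it should produce the divisor $(c,c)-(c,0)-(0,c)+(0,0)$ up to a factor of $2$.

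More concretely, the plan is to work on $X=S^1_a\times\{0,c\}$, the disjoint union of two circles, viewed as the source of a morphism $f\colon X\to T$, $(y,i)\mapsto(i,y)$. On the circle $\{0\}\times S^1$ we want a function with divisor $(0,0)-(0,c)$ times $2$. On a single circle, however, $\mathrm{div}(\phi)=2(0)-2(c)$ forces $2c\equiv0$ in $\R/a\Z$, which is false in general.

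\textbf{Fix via the diagonal.} To avoid this obstruction, use instead a connected source: the curve
\[
\Gamma=\{y=0\}\cup\{y=c\}\cup\{x=y\}\subset T.
\]
This is a tropical cycle, since it is a union of closed geodesics. On $\Gamma$ we seek a bounded piecewise-linear $\phi$, with integer slopes, whose divisor is
\[
2(c,c)-2(c,0)-2(0,c)+2(0,0),
\]
plus balancing corrections at the crossing points $(0,0)$, $(c,c)$ of the diagonal with the horizontals. Solving for $\phi$ is a linear system: the graph Laplacian $Q$ of the metric graph $\Gamma$, as in the proof of Lemma \ref{voltage_functions}, has image exactly the degree-zero vectors. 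So a real solution exists. The remaining issue is integrality of the slopes, and the factor $2$ is exactly what we expect to clear the denominators.

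\textbf{Main obstacle.} I expect the hardest step to be showing that the slopes of $\phi$ are integers for every $c$, rather than only for rational $c/a$. The general Lemma \ref{voltage_functions} perturbs vertex positions to achieve integrality, but here the support points are fixed. I would first compute $\phi$ explicitly on $\Gamma$, using its symmetry under $(x,y)\mapsto(y,x)$ composed with a translation, and check that the slopes come out as $0$ or $\pm1$ independently of $c$. If that fails, I would replace $\Gamma$ by the two-ended balanced graph of the function $f$ of Example \ref{ex:increase_k_get_non-zero_slopes}, specialised to $k=2$ and allowing constant segments, and add the manual corrections as in Remark \ref{remark:increasing_k}.
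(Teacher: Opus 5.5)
\textbf{The main construction has a genuine gap.} Your curve $\Gamma=\{y=0\}\cup\{y=c\}\cup\{x=y\}$ fails, and not for a denominator-clearing reason.

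\emph{Why $\Gamma$ fails.} The circle $\{y=0\}$ meets the rest of $\Gamma$ only at $(0,0)$, so $(0,0)$ is a cut vertex. For any rational function $\phi$ on $\Gamma$, the restriction $\phi|_{\{y=0\}}$ is a rational function on that circle, and its divisor agrees with $\mathrm{div}(\phi)$ away from $(0,0)$. If $\mathrm{div}(\phi)=2(c,c)-2(c,0)-2(0,c)+2(0,0)$, degree zero then forces $\mathrm{div}(\phi|_{\{y=0\}})=2(0,0)-2(c,0)$. This is exactly the obstruction $2c\in a\Z$ you already found for two disjoint circles. The circle $\{y=c\}$, hanging off the cut vertex $(c,c)$, has the same problem.

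Concretely, the real-slope solution from the Laplacian is unique up to a constant, and on $\{y=0\}$ its slopes are $2(a-c)/a$ and $-2c/a$. No integer multiple of these is integral when $c/a\notin\Q$. So neither the factor $2$ nor any other factor rescues the construction, and your planned check that the slopes are $0,\pm1$ will fail. ``Balancing corrections'' at the crossing points are not available either: any coefficient of $\mathrm{div}(\phi)$ at $(0,0)$ or $(c,c)$ other than the target one survives in $f_*\mathrm{div}(\phi)$.

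\emph{The fallback.} Your fallback is, in outline, the paper's proof, but you give only a pointer. The paper takes $\varphi$ with $\mathrm{div}(\varphi)=2p-q-y$, where $y=2p-q$, and slopes $(1,0,-1)$. It builds $W\subset(S^1_a)^2\times\R$ from $A(t)$. It then has to insert by hand the edges $L_1,L_2$ over the constant arc \emph{and} an extra anti-diagonal edge $L_3$ from $(q,y,l)$ to $(y,q,l)$. That edge moves two coordinates at once, a type absent from the general construction. Finding these edges and checking balancing is the whole argument, and the proposal contains none of it.

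\textbf{The missing idea is the right choice of curves.} With it, your bounded-equivalence strategy works cleanly. Use the anti-diagonal through $(p,p)$ and the horizontal and vertical circles through $(q,q)$. In your normalisation these are $\{x+y=2c\}$, $\{y=0\}$ and $\{x=0\}$; your first attempt $\{x+y=c\}$ was the anti-diagonal through the wrong corners.

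Push forward the principal divisor $2p-q-y$ on $S^1_a$ from Example \ref{example:circle_must_have_zero_slopes} along the morphisms $x\mapsto(x,2p-x)$, $x\mapsto(x,q)$ and $x\mapsto(q,x)$. This is legitimate by Remark \ref{rmk:pushforward_on_Chow}, and zero slopes are harmless in Definition \ref{def:rational_equivalence}. It gives
\[
2(p,p)\sim(q,y)+(y,q),\qquad 2(p,q)\sim(q,q)+(y,q),\qquad 2(q,p)\sim(q,q)+(q,y).
\]
The first relation minus the other two is exactly $2(p-q)^2\sim0$, with the auxiliary points $(q,y)$ and $(y,q)$ cancelling.

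This union of three circles is precisely the image of the paper's $W$ in $(S^1_a)^2$. The edges $F_1,F_2,L_3$ sweep out the anti-diagonal, and the remaining non-contracted edges sweep out the two circles through $(q,q)$. So this is the same equivalence, packaged more economically and without the $A(t)$ machinery or ad hoc edges.
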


\begin{remark}
	Following Remark \ref{remark:increasing_k}, this matches our expectation that the optimal exponent $k$ for which Proposition \ref{prop:tropical_voevodsky_curves} holds should be $g(C)+1$. However, it is \textit{not} the case that we can construct a rational function $\varphi$ on $S^1_a$ with divisor $2p-q-y$ for some $y\in C$ whose slopes are non-zero; see Example \ref{example:circle_must_have_zero_slopes} below. To find a rational function with nowhere-zero slopes whose divisor is $k\cdot p-q-y_1-\dots-y_{k-1}$, we need to take $k\geq3$, as we show in Example \ref{ex:increase_k_get_non-zero_slopes}. In that case, our construction would follow through and yield $3!\cdot(p-q)^3\sim0$. However, because the case $C\sim S^1$ is simple enough, we are able to deal with zero slopes manually, and prove the optimal result with $k=2$. 
\end{remark}

\begin{example}\label{example:circle_must_have_zero_slopes}
	Identify $S^1_a$ with $\mathbb{R}/a\mathbb{Z}$, with $p=0$ and $q$ at distance
	$l_1$ from $p$. A rational function $\varphi$ with
	$\operatorname{div}(\varphi)=2p-q-y$ must be affine on each of the three arcs
	cut out by $p$, $q$ and $y$. If $s_1,s_2,s_3$ denote its slopes on the arcs
	$p\to q$, $q\to y$, $y\to p$ (of lengths $l_1,l_2,l_3$), the divisor
	condition forces $s_2=s_1-1$ and $s_3=s_1-2$, while continuity around the
	circle gives $s_1\cdot a=l_2+2l_3$. As $0<l_2+2l_3<2a$, we must have
	$s_1=1$, so $\varphi$ is unique up to an additive constant and its slopes are
	$(1,0,-1)$. The only possibility is that $y$ is the reflection of $q$ through $p$ (equivalently,
	$y=2p-q$ in the Jacobian $J(S^1_a)\cong\mathbb{R}/a\mathbb{Z}$).
	In particular $\varphi$ has slope zero on the arc from $q$ to $y$; so the only case in which our construction from Section \ref{section:results} runs through is if $q=y$ is antipodal to $p$, i.e.
	$d=a/2$, in which case $\operatorname{div}(\varphi)=2p-2q$ with slopes
	$(1,-1)$.
%
%
\end{example}

\begin{example}\label{ex:increase_k_get_non-zero_slopes}
	Using $S^1_a\cong\mathbb{R}/a\mathbb{Z}$ as above, with $p=0$ and $q$ at distance
	$l_1$ from $p$. This time, we are looking for a rational function $f$ on $S_a^1$ with
	$\mathrm{div}(f)=3\cdot p-q-y_1-y_2$ whose slopes are nowhere zero. Set $l_1$, $l_2$,
	$l_3$ and $l_4$ to be the lengths of the segments $p\rightarrow q$, $q\rightarrow y_1$,
	$y_1\rightarrow y_2$, $y_2\rightarrow p$, and $s_1,s_2,s_3,s_4$ the slopes of $f$ on
	these respective segments. Again, the divisor condition imposes $s_2=s_1-1$,
	$s_3=s_2-1$, and $s_4=s_3-1=s_1-3$. Continuity of $f$ imposes
	$s_1\cdot a=l_2+2l_3+3l_4.$ As the right-hand side lies strictly between $0$ and $3a$, the only integer values
	available are $s_1=1$ or $s_1=2$ which yield slopes $(1,0,-1,-2)$ and $s_1=2$
	gives $(2,1,0,-1)$. More generally, continuity imposes that the slopes take both positive and negative values on different segments of $C$. If all points of the divisor have coefficients $\pm1$, slopes will \enquote{jump} by $\pm1$, and therefore necessarily go through zero while going from positive to negative values. This shows no choice of four \emph{distinct}
	points $p,q,y_1,y_2$ yields nowhere-zero slopes.
	We therefore set $y_1=y_2=:y$. Taking $s_1=2$, we have collapsed the segment on which $f$ is constant and we now get slopes $(2,1,-1)$ on the arcs $p\rightarrow q$,
	$q\rightarrow y$, $y\rightarrow p$. Working out the continuity condition reveals collapse works when $l_1<\frac{a}{3}$; otherwise the collapse $q=y_1$ works instead. Assuming this, taking the lengths of $q\rightarrow y$ to be $\frac{a-3l_1}{2}$ yields the desired function. Its two-ended balanced graph is represented in Figure \ref{fig:two_ended_balanced_graph_full_example}, and our construction can be applied to it to show that $3!\cdot(p-q)^3\sim0$ in $(S^1_a)^3$ for any $p,q\in S^1_a$. 

\begin{figure}[h!]
	\centering
	\begin{minipage}{0.45\textwidth}
		\centering
		\begin{tikzpicture}[scale=1.1]
			\draw[thick] (0,0) circle (1);
			
			\fill (90:1)   circle (1.2pt) node[above=2pt] {$p$};
			\fill (15:1)   circle (1.2pt) node[right=2pt] {$q$};
			\fill[red] (285:1) circle (1.2pt) node[below right=1pt, red] {$y_1$};
			\fill[red] (195:1) circle (1.2pt) node[below left=1pt, red] {$y_2$};
			
			
		\end{tikzpicture}
	\end{minipage}
	\hfill
	\begin{minipage}{0.45\textwidth}
		\centering
		\begin{tikzpicture}[scale=1.1]
			\draw[thick] (0,0) circle (1);
			
			\fill (90:1)  circle (1.2pt) node[above=2pt] {$p$};
			\fill (30:1)  circle (1.2pt) node[above right=1pt] {$q$};
			\fill[red] (300:1) circle (1.2pt) node[below right=1pt, red] {$y$};
			
			\node[blue] at (60:0.78)  {$2$};
			\node[blue] at (-15:0.78) {$1$};
			\node[blue] at (195:0.78) {$-1$};
			
		\end{tikzpicture}
	\end{minipage}
	\caption{Left: four distinct points, where continuity forces a vanishing slope. Right: the
		collision $y_1=y_2=y$ with $q$ at distance $l_1<a/3$ from $p$, for which
		$\mathrm{div}(f)=3p-2y-q$ is realised by a function with slopes $(2,1,-1)$.}
	\label{fig:collapse_get_nonzero_slopes}
\end{figure}
\end{example}

\begin{proof}[Proof of Proposition \ref{prop:CH_0_for_T^2}]
	We use the rational function $\varphi$ from Example \ref{example:circle_must_have_zero_slopes} to construct a $1$-cycle $W\subset (S_a^1)\times\R$ realising the rational equivalence $2\cdot(p-q)^2\sim0$. Because $\varphi$ is only well-defined up to a constant, we set $\varphi(p)=0$. 
	The minimum of $\varphi$ is $\varphi(p)=0$, and its maximum is $\varphi(q)=\varphi(y)=l$. Defining $A(t)$ as in Section \ref{section:results} with the subtlety that we assign multiplicity zero to points on the interior of the edge on which $\varphi$ is constant, we get 
	\begin{align}
	A(t)=\begin{cases}
		\{p,p\}& \text{for $t\leq0$} \\
		\{x_1(t),x_2(t)\} & \text{for $t\in[0,l]$}\\
		\{q,y\} & \text{for $t\geq l$},
	\end{cases} \notag
\end{align}
	where $x_1(t)$ and $x_2(t)$ are given by $t$ and $-t$ respectively in the parametrisation $S^1_a\sim\mathbb{R}/a\mathbb{Z}$ where $p=0$, $q=l$ and $y=-l$. 
%
	The rational equivalence in $W$ has edges of type $N$ with $\R$-coordinates in $(-\infty,0]$, of type $F$ with $\R$-coordinates $[0,l]$, of type $P$ with $\R$-coordinates in $[l;+\infty)$, and of type $L$ with fixed $\R$-coordinate $l$. 
	The edges of type $N$ are
	\begin{align}
		& N_1=\{(p,p)\}\times(-\infty;0] & &N_2=\{(p,q)\}\times(-\infty;0]& \notag\\
		&N_3=\{(q,p)\}\times(-\infty;0]& &N_4=\{(q,q)\}\times(-\infty;0]& \notag
	\end{align}
	with weights $w(N_1)=w(N_4)=2$ and $w(N_2)=w(N_3)=-2$.
	The edges of type $F$ are
\begin{center}
	\makebox[\linewidth]{\hfill$F_1=\{(x_1(t),x_2(t),t)\}_{t\in[0,l]}$\hfill$F_2=\{(x_2(t),x_1(t),t)\}_{t\in[0,l]}$\hfill}\\[4pt]
	\makebox[\linewidth]{\hfill$F_3=\{(x_1(t),q,t)\}_{t\in[0,l]}$\hfill$F_4=\{(q,x_2(t),t)\}_{t\in[0,l]}$\hfill}\\[4pt]
	\makebox[\linewidth]{\hfill$F_5=\{(q,x_1(t),t)\}_{t\in[0,l]}$\hfill$F_6=\{(x_2(t),q,t)\}_{t\in[0,l]}$\hfill$F_7=\{(q,q,t)\}_{t\in[0,l]}$\hfill}
\end{center}
	with weights $w(F_1)=w(F_2)=1$, $w(F_7)=2$, and $w(F_3)=w(F_4)=w(F_5)=w(F_6)=-1$ (because all slopes have absolute value $1$, the weights coincides with the multiplicities). 
	There are three of type $P$ with fixed $(S_a^1)^2$-coordinates equal to $\{(q,y)\}$, $\{y,q\}$ and $\{q,q\}$. Because as we have seen, for combinatorial reasons they appear with multiplicity/weight zero we will not take them into account. 
	To add the appropriate edges of type $L$, we need to understand the balancing condition at each of the vertices. An explicit verification yields that the balancing condition holds for all four vertices at $t=0$, as expected. It remains to check the three vertices $(q,q,l)$, $(y,q,l)$ and $(q,y,l)$ at $t=l$. 
	For the vertex $(q,q,l)$ the adjacent vertices are $F_3$, $F_5$ and $F_7$, and their primitive tangent vectors are respectively $(0,-1,-1)$, $(-1,0,-1)$ and $(0,0,1)$. Their weighted sum is therefore $(-1,-1,0)$. We add the following two edges, each with weight $-1$ (which can be understood from the presence of a constant $q$-factor):
	\begin{align}
		&L_1=[q,y]\times\{y\}\times\{l\}&&L_2=\{q\}\times[q,y]\times\{l\},& \notag
	\end{align}
	where $[q,y]\cong[l,a-l]$ in the parametrisation $S_a^1\cong\R/a\Z$ is the segment from $q$ to $y$ not containing $p$ (see the figure from Example \ref{example:circle_must_have_zero_slopes}). This makes $(q,q,l)$ balanced. 
	For the vertex $(y,q,l)$, the two adjacent edges were $F_2$ and $F_6$, with primitive vectors $(1,-1,-1)$ and $(1,0,-1)$. It is also adjacent to $L_1$ with primitive vector $(-1,0,0)$. Their weighted sum is thus $(1,-1,0)$.
	Similarly, the vertex $(q,y,l)$ is adjacent to $F_1$, $F_4$ and $L_2$, with primitive vectors $(-1,1,-1)$, $(0,1,-1)$ and $(0,-1,0)$. Their weighted sum is thus $(-1,1,0)$. 
	We insert the edge $$\{(z(s),z(1-s),l)\}_{s\in[0,1]},$$ where $z(s)$ is the constant speed path from $q$ to $y$ which doesn't pass through $p$, given in the parametrisation by $z(t)=(1-s)\cdot l+s\cdot (a-l)$. Its endpoints are $(q,y,l)$ and $(y,q,l)$, and its primitive tangent vector from $(q,y,l)$ is $(1,-1,0)$ and from $(y,q,l)$ is $(-1,1,0)$. Therefore, assigning it weight $w(L_3)=1$ makes both $(q,y,l)$ and $(y,l,q)$ balanced. 

	We have thus constructed a $1$-cycle in $(S^1_a)^2\times\R$, with $W_t=2\cdot(p-q)^2$ for $t<0$ and $W_t=0$ for any $t>l$, which yields the desired result. 
\end{proof}

\subsection{Tropical Ceresa cycles of hyperelliptic curves}\label{section:examples_2}

In this Section, we apply our Theorem \ref{thm:tropical_voevodsky} to prove that the Ceresa cycle of a hyperelliptic tropical curve $C$ is smash-nilpotent in its Jacobian $J(C)$. 

Recall that the tropical Jacobian of $C$ is the torus
\begin{align}
	J(C):=H^0(T_\R^*C)^*/H_1(C;\Z),	\notag
\end{align}
where $H_1(C;\Z)$ is identified with a lattice inside $H^0(T_\R^*C)^*$ by setting $\gamma(\alpha):=\int_\gamma\alpha$ for any $\alpha\in H^0(T_\R^*C)$. If the underlying graph of $C$ has genus $g$, then $H^0(T_\Z^*C)$ is a free abelian group of rank $g$, so that $J(C)$ is a $g$-dimensional torus.

Given a choice of basepoint $p\in C$, the tropical Abel-Jacobi map
\begin{align}
	AJ_p:\hspace{2mm}& C\longrightarrow J(C) \notag \\
	& q\mapsto \left(\alpha\in\Omega^1(C)\mapsto \int_{\gamma:p\rightarrow q}\alpha\right) \notag
\end{align}
embeds $C$ into $J(C)$, where $\gamma:p\rightarrow q$ is any path in $C$ between $p$ and $q$.

A simple example in which $C$ is a genus $2$ curve is represented in Figure \ref{fig:genus_2_Jacobian}.

\begin{figure}[htbp]
	\centering
	\begin{tikzpicture}[
		scale=0.8,
		every node/.style={transform shape},
		dot/.style      = {circle, fill, inner sep=1.5pt},
		sq/.style       = {rectangle, fill, inner sep=2.5pt, minimum size=5pt},
		cross/.style    = {font=\small},
		lbl/.style      = {font=\small},
		]
		
		\coordinate (p0) at (0,0);
		\draw (-1.5,0) ellipse [x radius=1.5, y radius=1];
		\draw (1.5,0)  ellipse [x radius=1.5, y radius=1];
		\node[dot] at (p0) {};
		\node[lbl] at (0.25,0.25) {$p_0$};
		\node[sq] (p0prime) at (-3,0) {};
		\node[lbl] at (-2.75,0.3) {$p_0'$};
		\node[cross] (xmark) at (-1.5,-1) {$\times$};
		\node[lbl] at (-1.5,-1.35) {$p$};
		\node[lbl] at (-1.5,1.25) {$l_1$};
		\node[lbl] at (1.5,1.25)  {$l_2$};
		
		\begin{scope}[xshift=5.6cm, yshift=-1.125cm, scale=0.75]
			
			\coordinate (A) at (0,0);
			\coordinate (B) at (6,0);
			\coordinate (C) at (6,3);
			\coordinate (D) at (0,3);
			
			\draw (A) -- (B);
			\draw (D) -- (C);
			\draw (A) -- (D);
			\draw (B) -- (C);
			
			\node[dot] at (A) {};
			\node[dot] at (B) {};
			\node[dot] at (C) {};
			\node[dot] at (D) {};
			
			\node[lbl] at (-0.35,-0.35) {$p_0$};
			
			\node[cross] at (2,3)   {$\times$};
			\node[lbl]   at (2,3.35) {$p$};
			
			\node[sq] at (3.3,3) {};
			\node[lbl] at (3.75,3.3) {$p_0'$};
			
			\node[lbl] at (5.7,3.3) {$p_2$};
			
			\node[cross] at (2,0)   {$\times$};
			\node[lbl]   at (2,-0.35) {$p$};
			
			\node[sq] at (3.3,0) {};
			\node[lbl] at (3.75,-0.3) {$p_0'$};
			
			\draw[{Stealth[length=2.5mm]}-{Stealth[length=2.5mm]}] (5.7,-0.5) -- (0.3,-0.5);
			\node[lbl] at (3,-0.85) {$l_1$};
			
			\draw[{Stealth[length=2.5mm]}-{Stealth[length=2.5mm]}] (-0.6,0.3) -- (-0.6,2.7);
			\node[lbl] at (-1.0,1.5) {$l_2$};
			
		\end{scope}
		
	\end{tikzpicture}
	\caption{A genus $2$ curve on the left and its Jacobian on the right, with Weierstrass points $p_0$ and $p'_0$, and non-Weierstrass point $p$. The pairs of vertical and horizontal are identified, making $J(C)$ is a $2$-torus. }
	\label{fig:genus_2_Jacobian}
\end{figure}

Furthermore, $J(C)$ inherits a group structure from the one on $H^0(T_\R^*C)^*$, hence carries a natural involution $$(-1):J(C)\longrightarrow J(C)$$ obtained by taking inverses. The \textit{Ceresa cycle} of $C$ via $p$ is
\begin{equation}
	AJ_p(C)-(-1)_*(AJ_p(C))\in CH_1(J(C)). \notag
\end{equation}
We will abuse notation and write $C_p$ instead of $AJ_p(C)$, $C_p^-$ instead of $(-1)_*(AJ_p(C))$, and hence the Ceresa cycle simply as $C_p-C_p^-$.  

Notice that for different choices of $p\in C$, the cycles $C_p$ are algbraically equivalent $1$-cycles in $J(C)$, witnessed by the algebraic equivalence $\{(p,AJ_p(C)):p\in C\}\subset C\times J(C)$. Similarly, the algebraic equivalence class of the Ceresa cycle $C_p-C_p^-$ is independent of $p$. One can verify that if $C$ is hyperelliptic, choosing $p$ to be a Weierstrass point yields $C_p=C_p^-$, therefore the Ceresa cycle of a hyperelliptic curve is algebraically trivial. 

In the case of the genus $2$ curve depicted above in Figure \ref{fig:genus_2_Jacobian}, we have represented the two components $C_p$ and $C_p^-$ of the tropical Ceresa cycle with basepoint $p\in C$ in Figure \ref{fig:genus_2_Ceresa}. The left-hand (resp. right-hand) side represents $C_p$ (resp. $C^-_p$). 

\begin{figure}[htbp]
	\centering
	\begin{tikzpicture}[
		scale=0.75,
		every node/.style={transform shape},
		dot/.style   = {circle, fill, inner sep=1.7pt},
		sq/.style    = {rectangle, fill, inner sep=2.5pt, minimum size=5pt},
		cross/.style = {font=\small},
		lbl/.style   = {font=\small},
		redpath/.style = {red, line width=1.4pt, line cap=round},
		]
		
		\foreach \shift/\xm in {0/0, 7/3} {
			\begin{scope}[xshift=\shift cm]
				
				\coordinate (A) at (0,0); 
				\coordinate (B) at (6,0); 
				\coordinate (C) at (6,3); 
				\coordinate (D) at (0,3); 
				
				\draw (A) -- (B) -- (C) -- (D) -- cycle;
				
				\node[dot] at (A) {};
				\node[dot] at (B) {};
				\node[dot] at (C) {};
				\node[dot] at (D) {};
				
				\node[cross] at (1,0) {$\times$};
				\node[lbl]   at (1,-0.4) {$p$};
				
				\node[cross] at (1,3) {$\times$};
				
				\pgfmathsetmacro{\tA}{2 + \xm/3}
				\pgfmathsetmacro{\tB}{2.7 + \xm/3}
				\draw (\tA,-0.08) -- (\tA,0.08);
				\draw (\tA,2.92) -- (\tA,3.08);
				\node[sq] at (\tB,0) {};
				\node[sq] at (\tB,3) {};
				
				\ifdim \xm pt=0pt
				\draw[redpath] (D) -- (A);
				\draw[redpath] (A) -- (1,0);
				\draw[redpath] (1,3) -- (C);
				\else
				\draw[redpath] (1,0) -- (\xm,0);
				\draw[redpath] (\xm,0) -- (\xm,3);
				\draw[redpath] (\xm,3) -- (C);
				\draw[redpath] (D) -- (1,3);
				\fi
				
			\end{scope}
		}
		
	\end{tikzpicture}
	\caption{The two components of the Ceresa cycle of $C$ via $p$: $C_p\subset J(C)$ on the left, and $C_p^-\subset J(C)$ on the right.}
	\label{fig:genus_2_Ceresa}
\end{figure}

Then our Theorem \ref{thm:tropical_voevodsky} implies the following fact, non-trivial when $p$ is not a Weierstrass point of $C$:
\begin{corollary}
	Let $C_p-C_p^-$ be the tropical Ceresa cycle of $C$ in $J(C)$ for \textit{any} basepoint $p\in C$. Then $C_p-C_p^-$ is smash-nilpotent. 
\end{corollary}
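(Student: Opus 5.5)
The plan is to apply Theorem \ref{thm:tropical_voevodsky} with $Y=J(C)$ and $Z=C_p-C_p^-$. What remains is to check its hypotheses: $J(C)$ is a smooth tropical variety which is regular at infinity, with $J(C)=J(C)^{[0]}$ since it is a compact real torus $V/\Lambda$ with integral structure and empty boundary; and $C_p-C_p^-$ is algebraically trivial in $J(C)$ for every basepoint $p$.

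First, $J(C)$ is locally modelled on $\R^g$, with transition maps given by lattice translations. So it is a smooth tropical variety of finite type with no points of positive sedentarity, and regularity at infinity holds vacuously. Moreover $C_p$ and $C_p^-$ are tropical $1$-cycles: $AJ_p$ is a tropical morphism, and $(-1)$ is a tropical automorphism, so both are pushforwards of $C$.

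Second, algebraic triviality. Choose a Weierstrass point $w\in C$. Here we need a definition of hyperelliptic such that the involution $\iota$ of $C$ (or the divisor class $2w$ being the $g^1_2$) gives $AJ_w\circ\iota=(-1)\circ AJ_w$, hence $C_w=C_w^-$ as cycles. The paper asserts this, and I would take it as the standing definition of a hyperelliptic tropical curve. Next, consider the family $\Gamma:=\{(x,AJ_x(y)) : x,y\in C\}\subset C\times J(C)$. This is the pushforward of $C\times C$ under the tropical morphism $(x,y)\mapsto (x,AJ_y(\cdot)-\ldots)$; concretely $AJ_x(y)=AJ_w(y)-AJ_w(x)$ is affine in each variable, so $\Gamma$ is a $2$-cycle. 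Its fibre over $x$ is $C_x$. Similarly, pushing forward by $\mathrm{id}\times(-1)$ gives a family with fibres $C_x^-$. The difference $\Gamma-(\mathrm{id}\times(-1))_*\Gamma$ then has fibre $C_p-C_p^-$ over $p$ and fibre $0$ over $w$. Since $C$ is smooth and compact, and $p,w$ may be taken to have sedentarity zero (moving them off leaves as in the proof of Theorem \ref{thm:tropical_voevodsky}), this is an algebraic equivalence $C_p-C_p^-\stackrel{alg.}{\sim}0$.

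The main obstacle is making the fibre computation rigorous: namely, that the fibre of $\Gamma$ at $x$, defined via $\mathrm{div}(\varphi_x)$, really equals $(AJ_x)_*C$ with the correct weights, including at vertices of $C$. The approach would be to use the projection formula for the product morphism $C\times C\to C\times J(C)$, pulling $\varphi_x$ back from the first factor. On $C\times C$ the fibre is $\{x\}\times C$; pushing forward then gives $(AJ_x)_*C$. With algebraic triviality established, Theorem \ref{thm:tropical_voevodsky} yields $k_0$ such that $k!\,(C_p-C_p^-)^k\sim0$ in $J(C)^k$ for all $k\geq k_0$. This is exactly smash-nilpotence in the sense of Definition \ref{def:smash_nilpotence}.
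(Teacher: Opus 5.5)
Your proposal is correct and follows the paper's own route. The family $\{(x,AJ_x(C))\}\subset C\times J(C)$ makes the algebraic class of $C_p-C_p^-$ independent of $p$, and a Weierstrass point $w$ gives $C_w=C_w^-$, so the Ceresa cycle is algebraically trivial and Theorem \ref{thm:tropical_voevodsky} applies with $Y=J(C)=J(C)^{[0]}$. Your explicit difference family $\Gamma-(\mathrm{id}\times(-1))_*\Gamma$, with its fibres computed via the projection formula, just makes precise what the paper leaves as ``one can verify''.
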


\begin{remark}
	Because the algebraic equivalence $C_p-C_p^-\sim 0$ has base $C$, we expect that $k!\cdot(C_p-C_p^-)^k$ is rationally trivial whenever $k> g(C)$. 
\end{remark}

\vspace{3mm}
\printbibliography
	
\end{document}